\theoremstyle{plain}
\newtheorem{theorem}{Theorem}[section]
\newtheorem*{theorem*}{Theorem}
\newtheorem{proposition}[theorem]{Proposition}
\newtheorem{lemma}[theorem]{Lemma}
\newtheorem{conjecture}[theorem]{Conjecture}
\newtheorem*{conjecture*}{Conjecture}
\theoremstyle{definition}
\newtheorem{definition}[theorem]{Definition}
\theoremstyle{remark}
\newtheorem{remark}[theorem]{Remark}
\newtheorem{claim}[theorem]{Claim}
\newcommand{\Ric}{\operatorname{Ric}}
\newcommand{\HSC}{\operatorname{HSC}}
\newcommand{\HBC}{\operatorname{HBC}}
\newcommand{\scal}{\operatorname{scal}}
\def\Xint#1{\mathchoice
{\XXint\displaystyle\textstyle{#1}}%
{\XXint\textstyle\scriptstyle{#1}}%
{\XXint\scriptstyle\scriptscriptstyle{#1}}%
{\XXint\scriptscriptstyle\scriptscriptstyle{#1}}%
\!\int}
\def\XXint#1#2#3{{\setbox0=\hbox{$#1{#2#3}{\int}$ }
\vcenter{\hbox{$#2#3$ }}\kern-.6\wd0}}
\def\dashint{\Xint-}
\begin{document}
\bibliographystyle{alpha}

\title[Negative curvature and positivity of the canonical calss]{Kobayashi hyperbolicity, negativity of the curvature and positivity of the canonical bundle} 

\author{Simone Diverio }
\address{Simone Diverio \\ Dipartimento di Matematica \lq\lq Guido Castelnuovo\rq\rq{} \\ SAPIENZA Università di Roma \\ Piazzale Aldo Moro 5 \\ I-00185 Roma.}
\email{diverio@mat.uniroma1.it}

\thanks{}
\keywords{}
\subjclass[2010]{}
\date{\today}




\maketitle
\tableofcontents

\section{Introduction}

Let $X$ be a compact complex manifold. An \emph{entire curve} traced in $X$ is by definition a non constant holomorphic map $f\colon\mathbb C\to X$. By Brody's criterion $X$ is Kobayashi hyperbolic if and only if $X$ does not admit any entire curve.

At the very beginning of the theory, in the early 70's, very few examples of (higher dimensional) compact complex manifolds where known: mainly compact quotients of bounded domains in $\mathbb C^n$. 
Such quotients admits the Bergman metric, whose class lies by construction in the opposite of the first Chern class of $X$. Thus, for them, the canonical bundle is positive and therefore ample. In particular such quotients are projective.

We can guess that this lack of knowledge of examples on the one hand, and the positivity property of the canonical bundle of the known hyperbolic compact complex manifolds on the other hand, led S. Kobayashi to conjecture the following.

\begin{conjecture*}[Kobayashi '70]
Let $X$ be a compact Kähler (or projective) manifold which is Kobayashi hyperbolic. Then, $K_X$ is ample.
\end{conjecture*}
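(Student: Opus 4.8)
The plan is to deduce ``$K_X$ ample'' from the two weaker statements ``$K_X$ nef'' and ``$K_X$ big'', feeding in at each step the fact that, by Brody's criterion recalled above, a compact hyperbolic $X$ carries no entire curve, and in particular no rational curve.

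\smallskip

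\noindent\emph{Step 1: $K_X$ is nef.} Assume first that $X$ is projective. If $K_X$ were not nef, Mori's Cone Theorem would exhibit a $K_X$-negative extremal ray spanned by the class of a rational curve $C\subset X$; precomposing a normalization $\mathbb{P}^1\to C$ with a surjection $\mathbb{C}\to\mathbb{P}^1$ would then produce an entire curve in $X$, a contradiction. Hence $K_X$ is nef. In the K\"ahler case one replaces this by the recently established K\"ahler analogues of the Cone Theorem (H\"oring--Peternell) in low dimension, or by a separate argument showing that a hyperbolic compact K\"ahler manifold is projective.

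\smallskip

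\noindent\emph{Step 2: $K_X$ is big.} This is the assertion that $X$ is of general type, and it is the heart of the matter. The only known general strategy is to \emph{extract negative curvature from hyperbolicity} and then convert negativity of the curvature into positivity of $K_X$: if $X$ carried a smooth K\"ahler metric $\omega$ with $\HSC(\omega)<0$, or more generally a smooth Finsler metric on $T_X$ of negative holomorphic sectional curvature, then Schwarz-lemma and Monge--Amp\`ere estimates in the spirit of Aubin--Yau and Wu--Yau would give at once that $K_X$ is ample. The obstruction is precisely here: the intrinsic Kobayashi--Royden pseudo-metric of a hyperbolic manifold, though a genuine metric, is in general only upper semicontinuous and carries no curvature information, so it cannot be fed into this machinery. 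For this reason the implication ``hyperbolic $\Rightarrow$ of general type'' is itself open, and can be settled only in special cases (surfaces, via the Enriques--Kodaira classification) or under an a priori negativity hypothesis on the curvature of some smooth metric on $X$, which is the framework of the present paper.

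\smallskip

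\noindent\emph{Step 3: from nef and big to ample.} Granting that $K_X$ is nef and big, I would invoke the Base-Point-Free Theorem: $mK_X$ is globally generated for $m\gg0$ and defines a birational morphism $\varphi\colon X\to X_{\mathrm{can}}$ onto the canonical model, with $K_X=\varphi^*K_{X_{\mathrm{can}}}$ and $K_{X_{\mathrm{can}}}$ ample; this $\varphi$ is an isomorphism away from the curves $C$ with $K_X\cdot C=0$. Since $\varphi$ is a crepant contraction from a smooth variety, its positive-dimensional fibers are rationally chain connected (a relative rational-connectedness statement), hence contain rational curves, again contradicting hyperbolicity. Therefore $\varphi$ is an isomorphism and $K_X$ is ample. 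Thus Steps 1 and 3 are unconditional, and the single genuine obstacle is the middle one: the passage from the non-smooth Kobayashi metric to a smooth metric of negative holomorphic sectional curvature. This is exactly why the conjecture is still open in general, and why one is naturally led to impose such a negativity hypothesis from the outset.
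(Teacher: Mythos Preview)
Your proposal is not a proof but an honest outline of the known strategy, and you correctly flag Step~2 as the genuine gap: Kobayashi's conjecture is still open, and the paper does not claim to prove it. There is therefore no ``paper's own proof'' to compare against; what the paper offers instead is a \emph{conditional} discussion and a proof of the special case where $X$ carries a K\"ahler metric of negative holomorphic sectional curvature (the Wu--Yau theorem).

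That said, the paper's conditional route to Step~2 is different from yours. You try to approach bigness analytically, by upgrading the Kobayashi--Royden pseudometric to a smooth metric of negative holomorphic sectional curvature, and rightly note this is blocked by regularity. The paper instead assumes the abundance conjecture: then $K_X$ nef gives $K_X$ semi-ample, and the Iitaka fibration has general fiber $F$ with torsion canonical bundle, hence $c_1(F)_{\mathbb R}=0$. The problem thus reduces to showing that a projective manifold with trivial real first Chern class is never hyperbolic; via Beauville--Bogomolov this becomes a question about Calabi--Yau and irreducible holomorphic symplectic manifolds (the latter handled by Verbitsky under $b_2>3$, the former still open). Your framing explains why a direct analytic attack stalls; the paper's framing makes visible what the conjecture hinges on once abundance is granted.

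Your Steps~1 and~3 are unconditional and agree with the paper. For Step~3 the paper's argument (its lemma on big $K_X$ with no rational curves) is a bit lighter than yours: rather than invoking rational chain connectedness of fibers of the canonical contraction, it perturbs $K_X$ by a small ample divisor to produce a klt pair with $(K_X+\Delta)\cdot C<0$ on a contracted curve, and then the log Cone Theorem supplies the rational curve. Either argument works.
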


In the same vein, Kobayashi also asked in \cite{Kob74} whether a compact hyperbolic complex manifold has always infinite fundamental group. While the answer to this latter question is nowadays known to be negative (we know plenty of examples of simply connected compact complex manifold, mainly given by smooth projective general complete intersections of high degree), the former question is still widely open (and believed to be true).

Observe that, at least in the projective case, we now know since Mori's breakthrough \cite{Mor79} that being hyperbolic implies the nefness of the canonical bundle, due to the absence of rational curves. Thus, the canonical class is at least in the closure of the ample cone. What we want is then to show that hyperbolicity pushes the canonical class a little bit further into the ample cone.

Beside the class of smooth compact quotients of bounded domain, another remarkable class of compact hyperbolic manifolds ---known since the beginning of the theory--- is given by compact hermitian manifolds whose holomorphic sectional curvature is negative. Even if this is for sure an important class where to test conjectures on hyperbolic manifolds, it is somehow surprising that until very recently the Kobayashi conjecture was not known even for this class.

The principal aim of this chapter is to present in full detail a proof of the following statement due to Wu and Yau, which settles Kobayashi's conjecture for negatively curved hyperbolic projective manifolds.

\begin{theorem}[{\cite{WY16}}]
Let $X$ be a smooth projective manifold, and suppose that $X$ carries a Kähler metric $\omega$ whose holomorphic sectional curvature is everywhere negative. Then, $X$ posseses a (possibly different) Kähler metric $\omega'$ whose Ricci curvature is everywhere negative. In particular, $K_X$ is ample.
\end{theorem}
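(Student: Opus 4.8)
The plan is to produce $\omega'$ directly, as a limit of negatively twisted K\"ahler--Einstein metrics, through a continuity method. For $t\in(0,1]$ I look for a K\"ahler metric $\omega_t$ with
\[
\Ric(\omega_t)=-\omega_t+t\,\omega .
\]
Writing $\omega_t=t\,\omega-\Ric(\omega)+i\partial\bar\partial\varphi_t$, so that $[\omega_t]=t[\omega]+c_1(K_X)$, this is equivalent to the complex Monge--Amp\`ere equation
\[
\bigl(t\,\omega-\Ric(\omega)+i\partial\bar\partial\varphi_t\bigr)^{n}=e^{\varphi_t}\,\omega^{n},
\]
to be solved under the open constraint $\omega_t>0$; the difficulty is that the reference form $t\,\omega-\Ric(\omega)$ need not be positive for small $t$. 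Granting a priori estimates that are uniform as $t\to0^{+}$, the metrics $\omega_t$ subconverge in $C^{\infty}$ to a K\"ahler metric $\omega_0$ with $\Ric(\omega_0)=-\omega_0$, which is everywhere negative; we take $\omega'=\omega_0$, and, since $c_1(K_X)=[\omega_0]$ is then represented by a K\"ahler form, Kodaira's embedding theorem shows $K_X$ is ample. (A variant, closer to \cite{WY16}, stops at proving that $K_X$ is nef and big --- nefness being already known from hyperbolicity by Mori, and bigness following from the estimate below --- and then upgrades nef and big to ample using projectivity and the fact that $\HSC(\omega)<0$ leaves $X$ free of rational curves, via the base-point-free theorem.)

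The continuity argument has the usual three ingredients. \emph{Openness} of the set $S\subset(0,\infty)$ of solvable parameters: at a solution the linearization $\psi\mapsto\Delta_{\omega_t}\psi-\psi$ is an isomorphism of the relevant H\"older spaces, having trivial kernel on the compact $X$. \emph{Non-emptiness}: for $t$ large the reference form $t\,\omega-\Ric(\omega)$ is K\"ahler, so the class $t[\omega]+c_1(K_X)$ is K\"ahler and the equation is the negatively twisted K\"ahler--Einstein equation, solved unconditionally by the technique of Aubin and Yau (here the positive twist $t\,\omega$ only helps). \emph{Closedness} in $(0,\infty)$ --- and, more delicately, convergence as $t\to0^{+}$ --- reduces to uniform $C^{k,\alpha}$ bounds on $\varphi_t$, which I would establish in the order $C^{0}\Rightarrow C^{2}\Rightarrow C^{2,\alpha}\Rightarrow C^{k,\alpha}$, all resting on one geometric inequality.

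That inequality is a Schwarz lemma for the identity map $(X,\omega_t)\to(X,\omega)$. Since $\Ric(\omega_t)=-\omega_t+t\,\omega\ge-\omega_t$, the domain metric has Ricci curvature bounded below by a multiple of itself, and since $\HSC(\omega)\le-\kappa<0$ by compactness, the target has strictly negative holomorphic sectional curvature; Royden's refinement of Yau's Schwarz lemma then gives $\operatorname{tr}_{\omega_t}\omega\le C(n)/\kappa$, that is,
\[
\omega_t\ \ge\ c\,\omega,\qquad c=c(n,\kappa)>0,
\]
uniformly in $t$. Everything else is routine: at a maximum of $\varphi_t$ one has $\omega_t\le t\,\omega-\Ric(\omega)$, which bounds $\varphi_t$ from above, while $\omega_t\ge c\,\omega$ forces $e^{\varphi_t}=\omega_t^{n}/\omega^{n}\ge c^{n}$, bounding $\varphi_t$ from below; the Monge--Amp\`ere equation then controls $\omega_t^{n}/\omega^{n}$ from both sides, which together with $\omega_t\ge c\,\omega$ bounds $\operatorname{tr}_\omega\omega_t$ from above, giving $c\,\omega\le\omega_t\le C\,\omega$ and hence uniform ellipticity; Evans--Krylov upgrades this to a $C^{2,\alpha}$ bound and Schauder bootstrapping to bounds in every $C^{k,\alpha}$. (The same lower bound yields $\int_X\omega_t^{n}\ge c^{n}\int_X\omega^{n}>0$, whence $c_1(K_X)^{n}>0$: this is the bigness used in the variant.)

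I expect the Royden--Schwarz inequality to be the only genuine obstacle; the rest is either classical elliptic theory or formal. The naive Chern--Lu computation of $\Delta_{\omega_t}\log\operatorname{tr}_{\omega_t}\omega$ produces, alongside a harmless term involving the Ricci curvature of the domain $\omega_t$, a term built from the holomorphic \emph{bisectional} curvature of the target $\omega$ --- about which nothing is assumed, only that its holomorphic \emph{sectional} curvature is negative. The crux is Royden's algebraic lemma, asserting that the precise contraction of the curvature tensor of $\omega$ appearing there is already dominated by the bound on its holomorphic sectional curvature; it is exactly this that converts the hypothesis $\HSC(\omega)<0$ into the effective inequality $\omega_t\ge c\,\omega$, and thereby into all of the a priori estimates and the theorem.
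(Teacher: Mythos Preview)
Your proposal is correct and is essentially the original Wu--Yau argument: apply the maximum principle to the Royden--Schwarz inequality to get the uniform pointwise bound $\operatorname{tr}_{\omega_t}\omega\le C(n)/\kappa$, i.e.\ $\omega_t\ge c\,\omega$, and then bootstrap to $C^\infty$ convergence toward a genuine K\"ahler--Einstein metric. The paper sets up the same approximate equation and proves the same Laplacian inequality via Royden's lemma (your ``crux'' is exactly its Claim~\ref{step2}), but diverges from you at the final step. Rather than extracting a pointwise lower bound on $\omega_\varepsilon$ and passing to a limit metric, the paper integrates the inequality over $X$ to get $\int_X S_\varepsilon\,\omega_\varepsilon^n\le C\int_X\omega_\varepsilon^n$, combines this with the elementary bound $S_\varepsilon>e^{-u_\varepsilon/n}$, and then invokes the $L^1$-compactness of $\omega'$-plurisubharmonic functions to show that $\sup_X u_\varepsilon$ cannot drift to $-\infty$; this yields only $c_1(K_X)^n=\lim\int_X\omega_\varepsilon^n>0$, i.e.\ bigness, and ampleness is then concluded algebraically (your ``variant''). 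The paper also uses Mori's theorem up front to know $c_1(K_X)+\varepsilon[\omega]$ is K\"ahler and then applies Yau's theorem directly for each $\varepsilon$, rather than running a continuity path. The trade-off: your route produces the K\"ahler--Einstein metric outright, which is a stronger conclusion; the paper's pluripotential route is weaker in output but is precisely what survives when the hypothesis is relaxed to \emph{quasi}-negative holomorphic sectional curvature (Section~6.2), where $\kappa$ may vanish on a large set and your pointwise maximum-principle bound is unavailable.
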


Observe that, since the holomorphic sectional curvature decreases when the metric is restricted to smooth submanifolds, as a direct consequence one obtains that every smooth submanifold of a compact Kähler manifold with negative holomorphic sectional curvature has ample canonical bundle. This observation goes in the direction of the celebrated Lang conjecture which predicts the following.

\begin{conjecture}[{\cite{Lan86}}]
Let $X$ be a smooth projective complex manifold. Then, $X$ is Kobayashi hyperbolic if and only if $X$ as well as all of its subvarieties are of general type.
\end{conjecture}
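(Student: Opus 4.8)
The plan is to treat the two implications separately, reducing each to a central open problem in the subject and recording where the tools assembled above already settle special cases.

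For the forward implication, suppose $X$ is Kobayashi hyperbolic. The first step is the elementary observation that hyperbolicity is inherited by subvarieties: if $V\subseteq X$ is an irreducible subvariety and $g\colon\mathbb C\to V$ were a nonconstant holomorphic map, then composing with the inclusion would produce an entire curve in $X$, contradicting Brody's criterion. (For singular $V$ one works with the intrinsic Kobayashi pseudodistance, or passes to a desingularization, noting that being of general type is a birational invariant.) Hence the forward direction reduces to the single assertion that a smooth hyperbolic projective manifold is of general type. This is a weakened form of Kobayashi's conjecture recalled above: Mori theory already gives that hyperbolicity forces $K_X$ to be nef, and what remains is to push $K_X$ from the boundary of the ample cone to bigness, i.e. to show $K_X^{\dim X}>0$. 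The Wu--Yau theorem of the excerpt furnishes exactly this in the negatively curved class, and, because negative holomorphic sectional curvature passes to submanifolds, it simultaneously handles \emph{all} smooth subvarieties, yielding the forward implication outright whenever $X$ carries a K\"ahler metric of negative holomorphic sectional curvature.

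For the reverse implication, suppose that $X$ and every subvariety of $X$ are of general type, and suppose toward a contradiction that there is an entire curve $f\colon\mathbb C\to X$. Let $V=\overline{f(\mathbb C)}^{\,\mathrm{Zar}}$ be the Zariski closure of the image, an irreducible subvariety, which by hypothesis is of general type. The key input is the Green--Griffiths--Lang conjecture applied to $V$: a projective manifold of general type admits a proper algebraic subvariety containing the images of all entire curves. This would force $f(\mathbb C)\subseteq Z\subsetneq V$ for some proper $Z$, contradicting the fact that $V$ is the \emph{Zariski closure} of $f(\mathbb C)$. Hence no entire curve exists and $X$ is hyperbolic. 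Thus the reverse direction is a formal consequence of Green--Griffiths--Lang, the passage to a resolution of the possibly singular $V$ being the only technical point, handled again by the birational invariance of general type.

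The main obstacle is that both reductions land on problems that are themselves wide open. On the algebraic-degeneracy side, the strongest available techniques ---Demailly's jet differentials together with holomorphic Morse inequalities, the fundamental vanishing theorem which forces entire curves to satisfy every global jet differential with values in the dual of an ample line bundle, and Nevanlinna/Ahlfors--Schwarz estimates--- produce Green--Griffiths--Lang only in favorable situations (for instance algebraic degeneracy of entire curves on generic projective hypersurfaces of sufficiently high degree), and no method is known to generate enough jet differentials on an \emph{arbitrary} variety of general type. On the positivity side, producing bigness of $K_X$ from the mere nonexistence of entire curves requires bridging the gap between nef and big, for which the only unconditional input is Mori theory; in the absence of a curvature hypothesis as in Wu--Yau, controlling $K_X^{\dim X}$ seems to demand abundance-type information that is not available. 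It is precisely this double dependence on the Kobayashi and Green--Griffiths--Lang conjectures that keeps Lang's equivalence a conjecture rather than a theorem.
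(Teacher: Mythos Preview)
The statement is labeled a \emph{conjecture} in the paper, and the paper offers no proof of it; there is therefore no ``paper's own proof'' to compare your proposal against. Your write-up is not a proof either, and you say as much in your final paragraph: what you have given is the standard heuristic reduction of Lang's equivalence to two other open problems (Kobayashi's conjecture for the forward direction, Green--Griffiths--Lang for the reverse), together with a correct summary of why neither reduction can currently be completed.

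That said, your discussion is accurate and aligns well with how the paper situates the conjecture. The paper likewise observes that the forward implication, restricted to manifolds carrying a K\"ahler metric of negative holomorphic sectional curvature and to their \emph{smooth} subvarieties, follows from the Wu--Yau theorem via the decreasing property of holomorphic sectional curvature; it then stresses the need to handle \emph{singular} subvarieties and records Guenancia's theorem as the result that closes this gap in the negatively curved setting. Your treatment of the singular case (``pass to a desingularization, noting that general type is a birational invariant'') is a bit glib on the forward side: a resolution $\widetilde V\to V$ of a singular $V\subset X$ need not embed in $X$, so one cannot simply restrict the ambient metric to $\widetilde V$; this is exactly why extending Wu--Yau to the singular situation required the substantial further work you allude to. On the reverse side your Zariski-closure argument is the standard one and is fine as a reduction to GGL.

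In short: there is no gap to flag beyond the one you yourself identify, namely that both reductions bottom out in major open conjectures. Just be aware that the paper contains no proof of this statement, so a ``proof proposal'' here can only ever be a discussion of partial results and obstructions, which is what you have produced.
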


Thus, since a projective manifold with ample canonical bundle is of general type, Wu and Yau's theorem is also a confirmation in the negatively curved case, as long as only smooth subvarieties are concerned, of (one direction of) Lang's conjecture. It is therefore of primordial importance to extend their result in the singular case. The exact statement one should try to prove is the following.
Let $X$ be an irreducible projective variety and suppose to be able to embed $X$ into a projective manifold $Y$ supporting a Kähler metric whose holomorphic sectional curvature is negative, at least locally around $X$. Thus, there should exists a modification\footnote{By definition, a modification $\mu\colon\tilde X\to X$ is a proper surjective holomorphic map, such that there exists a proper analytic subset $S\subset X$ with the property that $\mu|_{\tilde X\setminus\mu^{-1}(S)}\colon\tilde X\setminus\mu^{-1}(S)\to X\setminus S$ is a biholomorphism.} $\mu\colon\tilde X\to X$, with $\tilde X$ smooth, such that the canonical bundle $K_{\tilde X}$ is big.

\subsubsection*{Addendum}

The Wu--Yau--Tosatti--Yang theorem (as well as its generalization by Diverio and Trapani) has been very recently proved by H. Guenancia in \cite{Gue18} also for singular subvarieties as mentioned here above. His very general result stems from a highly non trivial generalization of ideas explained in this chapter, involving also deep results from the Minimal Model Program. Its most general version can be stated as follows.

\begin{theorem}[Guenancia {\cite[Theorem B]{Gue18}}]
Let $(X,D)$ be a pair consisting of a projective manifold $X$ and a reduced divisor $D = \sum_{i\in I}D_i$ with simple normal crossings. Let $\omega$ be a K\"ahler metric on $X^\circ := X\setminus D$ such that there exists $\kappa_0 > 0$ satisfying
$$
\forall (x,v)\in X^\circ\times T_{X,x}\setminus\{0\},\quad\operatorname{HSC}_\omega(x,[v])<-\kappa_0.
$$
Then, the pair $(X, D)$ is of log general type, that is, $K_X + D$ is big. 

If additionally $\omega$ is assumed to be bounded near $D$, then $K_X$ is itself big.
\end{theorem}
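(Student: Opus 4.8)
The plan is to transplant the Wu--Yau continuity method --- which will be explained in detail in this chapter for the compact case --- to the quasi-projective manifold $X^\circ=X\setminus D$, using the open metric $\omega$ as the background metric of negative holomorphic sectional curvature, and to combine it with the logarithmic Minimal Model Program so as to bring the pair into a workable shape. First I would reduce to the case where $K_X+D$ is nef. The rational curves that could obstruct this have to be controlled: a $(K_X+D)$-negative extremal curve $C$ not contained in $D$ meets $D$ in finitely many points, and if it meets $D$ in at most two of them then (after normalization) $C\cap X^\circ$ is $\mathbb C$ or $\mathbb C^*$, which when $\omega$ has cusps carries a complete metric of Gaussian curvature $<-\kappa_0$ (the $\HSC$ decreasing under restriction to submanifolds) --- impossible for $\mathbb C$, hence also for $\mathbb C^*$ by passing to the universal cover; the curves meeting $D$ in more points, the curves inside $D$, and the Mori-fibration outcome of the MMP (whose log-Fano fibres would be rationally connected) are all dealt with by a careful analysis of the log MMP for $(X,D)$. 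Proving that the background curvature hypothesis does survive this whole reduction is exactly the deep Minimal Model Program input of Guenancia's argument, with no counterpart in the compact Wu--Yau setting; I expect it to be the principal obstacle. From now on assume $K_X+D$ nef, the modified pair still carrying an open K\"ahler metric with $\HSC<-\kappa_0$.

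Fix a smooth K\"ahler metric $\omega_X$ on $X$ and a volume form $\Omega$ on $X^\circ$ with poles along $D$ such that $-\Ric\Omega\in c_1(K_X+D)$ and $\Omega$ matches the asymptotics of $\omega^n$ near $D$. For $t\in(0,1]$ the class $c_1(K_X+D)+t[\omega_X]$ is K\"ahler, and with $\chi_t:=-\Ric\Omega+t\omega_X$ in it I would solve on $X^\circ$ the family of complex Monge--Amp\`ere equations $(\chi_t+\sqrt{-1}\,\partial\bar\partial u_t)^n=e^{u_t}\,\Omega$ with the expected cusp behaviour along $D$ (existence being the by-now standard theory of complete K\"ahler--Einstein metrics on quasi-projective manifolds), producing K\"ahler metrics $\tilde\omega_t:=\chi_t+\sqrt{-1}\,\partial\bar\partial u_t$ with $\Ric(\tilde\omega_t)=-\tilde\omega_t+t\,\omega_X$. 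The heart of the matter is the Wu--Yau second order estimate: with $S_t:=\operatorname{tr}_{\tilde\omega_t}\omega$, the Chern--Lu inequality sharpened by Royden's trick, together with $\HSC_\omega\le-\kappa_0$, yields on $X^\circ$ an inequality of the form $\Delta_{\tilde\omega_t}\log S_t\ge\tfrac{\kappa_0}{2}\,S_t-C$. Since $X^\circ$ is noncompact, I would apply the maximum principle not to $\log S_t$ but to $\log S_t+\varepsilon\,\psi$ with $\psi\to+\infty$ along $D$ a fixed potential (or invoke a Yau-type generalized maximum principle, legitimate because $\tilde\omega_t$ is complete near $D$) and let $\varepsilon\to0$, obtaining the uniform bound $S_t\le 2C/\kappa_0$. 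Plugging this into the Monge--Amp\`ere equation gives $C^{-1}\omega\le\tilde\omega_t\le C\,\omega$ on compact subsets of $X^\circ$, uniformly in $t$, whence local Evans--Krylov and Schauder estimates provide uniform $C^\infty$ bounds on compact subsets.

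Letting $t\to0$ along a subsequence, $\tilde\omega_t\to\tilde\omega_\infty$ in $C^\infty_{\mathrm{loc}}(X^\circ)$, where $\tilde\omega_\infty$ is a complete K\"ahler--Einstein metric on $X^\circ$ with $\Ric(\tilde\omega_\infty)=-\tilde\omega_\infty$ and $C^{-1}\omega\le\tilde\omega_\infty\le C\,\omega$. The \emph{strict} bound $\HSC_\omega<-\kappa_0$ is precisely what keeps $\tilde\omega_\infty$ uniformly positive: one gets $\tilde\omega_\infty\ge\varepsilon\,\omega_X$ on all of $X^\circ$ (clear on a fixed compact set, and near $D$ the metric $\omega$, hence $\tilde\omega_\infty$, dominates $\omega_X$), so $\tilde\omega_\infty$ extends to a K\"ahler current on $X$ in the class $c_1(K_X+D)$ (the potential $u_\infty$ having at worst logarithmic-type singularities along the divisor $D$, the extension across $D$ is routine). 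A class carrying a K\"ahler current is big; hence $K_X+D$ is big, i.e.\ $(X,D)$ is of log general type.

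For the addendum, if $\omega$ is bounded near $D$ then $\omega\le C\,\omega_X$ there, so the Schwarz estimate upgrades to $\tilde\omega_t\le C'\,\omega_X$ on all of $X^\circ$, and $\tilde\omega_\infty$ becomes a closed positive current on $X$ with bounded potentials putting no mass on $D$. This boundedness lets one re-run the whole argument with $K_X$ in place of $K_X+D$: the Monge--Amp\`ere reference volume may now be taken smooth on $X$ (no cusp being forced by $\omega$), and the only new work is a finer analysis of the log MMP for the extremal rays that are $K_X$-negative but not $(K_X+D)$-negative --- necessarily supported on curves meeting $D$ --- again controlled via the Schwarz lemma and Gauss--Bonnet; the same estimates then yield a K\"ahler current in $c_1(K_X)$, so $K_X$ is big. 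Apart from the Minimal Model Program reduction of the first step, the other genuinely technical point is to make the analysis near $D$ rigorous: uniform control of $u_t$ and $\tilde\omega_t$ near $D$ so that the maximum principle, the higher order estimates and the current-extension all go through simultaneously.
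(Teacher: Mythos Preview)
The paper does not contain a proof of this theorem. It is stated in the introductory addendum as a result due to Guenancia, cited from \cite{Gue18}; the only indication the paper gives about the argument is the single remark that it ``stems from a highly non trivial generalization of ideas explained in this chapter, involving also deep results from the Minimal Model Program.'' There is therefore no proof in the present paper against which to compare your proposal.

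Your outline is broadly consistent with that one-sentence description: you transplant the Wu--Yau continuity method to the open manifold $X^\circ$, use the Royden--sharpened Schwarz lemma as the key second-order estimate, and appeal to the log MMP to reach a workable birational model. Whether the details match Guenancia's actual argument --- in particular whether the MMP enters, as you suggest, by first modifying $(X,D)$ to make $K_X+D$ nef and then verifying that the curvature hypothesis survives the birational surgery, or in some other way --- cannot be decided from this paper, which treats the theorem as a black box. If you want a genuine comparison you will have to consult \cite{Gue18} directly.
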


This theorem thus provides a full confirmation of Lang's conjecture for compact Kähler manifolds with negative holomorphic sectional curvature.

Finally, Lang's conjecture has been settled very recently also in the particular case of compact free quotients of bounded domains (and in a slightly more general context, indeed) in \cite{DB18}.

\subsection{Organization of the chapter}

Beside the introduction, this chapter is made up of five sections. 

Section 1 is devoted to build the proper background in complex differential geometry in order to go into the proof of Wu--Yau's theorem, as well as the basic notions of complex hyperbolicity. In particular we summarize the different kinds of curvature in the riemannian setting as well as in the hermitian setting, putting in evidence the relations of between these notions in the Kähler case. Moreover, we explain if and how the sign of a particular notion of curvature propagates to others. We also take the opportunity to recall how the negativity of the holomorphic sectional curvature gives the Kobayashi hyperbolicity of a compact hermitian manifold.

Section 2 has a birational geometric flavor, and we try to motivate in this framework Kobayashi's conjecture as well as Wu--Yau's theorem and its generalizations using standard tools and conjectures. Namely, assuming the abundance conjecture and using the Iitaka fibration, we try to make clear how compact projective manifolds with trivial real first Chern class enter naturally into the picture and how to rule them out using the negativity (or even the quasi-negativity) of the holomorphic sectional curvature.

In Section 3 we present in full details an algebraic criterion due to J.-P. Demailly which a compact hermitian manifold must satisfy in order to have negative holomorphic sectional curvature. As a consequence, we construct (still following Demailly) an example of smooth projective surface which is hyperbolic, has ample canonical bundle, but nevertheless does not admit any negatively curved hermitian metric. This shows that Wu--Yau's theorem is unfortunately a confirmation of Kobayashi's conjecture only in a particular (although important) case.

Section 4 is the heart of the chapter and we present therein a complete, detailed proof of Wu--Yau's theorem. The proof is divided into several steps, in order to make the strategy more insightful. We tried to really work out every computation and estimate, perhaps even paying the price to be slightly redundant, to keep the chapter fully self-contained.

Finally, in Section 5 we present a couple of generalizations of Wu--Yau's theorem, namely the Kähler case due to V. Tosatti and X. Yang, and the (from a certain point of view, sharp) quasi-negative case due to S. Trapani and the author.

\subsubsection*{Acknowledgements} 
The author wish to express his gratitude to the anonymous referee for having read with great attention the present chapter, and for the uncountable valuables suggestions which really improved the exposition. 

\section{Complex differential geometric background and hyperbolicity}

The material in this section is somehow standard, but we take the opportunity here to fix notations and explain some remarkable facts which are not necessarily in everybody's background. We refer to \cite{Dem,Huy05,Zhe00} for an excellent and more systematic treatment of the subject.

Let $X$ be a complex manifold of complex dimension $n$, and let $h$ be a hermitian metric on its tangent space $T_X$, which is considered as a complex vector bundle endowed with the standard complex structure $J$ inherited from the holomorphic coordinates on $X$. Then, the real part $g$ of $h=g-i\omega$ defines a riemannian metric on the underlying real manifold, while its imaginary part $\omega$ defines a $2$-form on $X$. 

Now, one can consider both the riemannian or the hermitian theory on $X$. On the one hand we have the existence of a unique connection $\nabla$ on $T_X^\mathbb R$ ---the Levi--Civita connection--- which is both compatible with the metric $g$ and without torsion. Here the superscript $\mathbb R$ is put on $T_X$ to emphasize that we are looking at the real underlying manifold. We call the square of this connection $R=\nabla^2$, the riemannian curvature of $(T_X^\mathbb R,g)$. It is a $2$-form with values in the endomorphisms of $T_X^\mathbb R$. 

On the other hand, we can complexify $T_X$ and decompose it as a direct sum of the eigenbundles for the complexified complex structure $J\otimes\operatorname{Id}_\mathbb C$ relatives to the eigenvalues $\pm i$:
$$
T_X^\mathbb C=T_X\otimes\mathbb C\simeq T^{1,0}_X\oplus T^{0,1}_X.
$$
We have a natural vector bundle isomorphism
$$
\begin{aligned}
\xi\colon & T_X^\mathbb R\to T^{1,0}_X \\
& v\mapsto \frac 12(v-iJv)
\end{aligned}
$$
which is moreover $\mathbb C$-linear: $\xi\circ J=i\xi$. There is a natural way to define a hermitian metric on $T_X^\mathbb C$, as follows. We first consider the $\mathbb C$-bilinear extension $g^\mathbb C$ of $g$, and then its sesquilinear form $\tilde h$ made up using complex conjugation in $T_X^\mathbb C$:
$$
\tilde h(\bullet,\bullet):=g^\mathbb C (\bullet,\bar\bullet).
$$
Such a hermitian metric realizes the direct sum decomposition above as an orthogonal decomposition. The complexification of $\omega$, which we still call $\omega$ by an abuse of notation, is then a real positive $(1,1)$-form. These three notions, namely a hermitian metric on $T_X$, a hermitian metric on $T^{1,0}_X$, and a real positive $(1,1)$-form are essentially the same, since there is a canonical way to pass from one to the other.

Now, we know that there exists a unique connection $D$ on $T_X^{1,0}$ which is both compatible with $\tilde h$ and the complex structure: the Chern connection. We call the square of this connection $\Theta=D^2$, the Chern curvature of $(T_X^{1,0},\tilde h)$. It is a $(1,1)$-form with values in the anti-hermitian endomorphisms of $(T_X^{1,0},\tilde h)$.

A basic question is then: can we compare these two theories \textsl{via} $\xi$? The answer is classical and surprisingly simple. The riemannian theory and the hermitian one are the same if and only if the metric $h$ is Kähler, \textsl{i.e.} if and only if $d\omega=0$. In other words, the metric is Kähler if and only if
$$
D=\xi\circ\nabla\circ\xi^{-1},
$$
and of course, in this case, $\Theta=\xi\circ R\circ\xi^{-1}$ (see \textsl{e.g.} \cite[\S 4.A]{Huy05}).

\subsection{Notions of curvature in riemannian and hermitian geometry and their correlation in the Kähler case.}

We now give a brief overview of the different notions of curvature in the setting respectively of riemannian geometry and hermitian geometry. Then, we shall compare them in the case of Kähler metrics, with particular attention to the \lq\lq propagation\rq\rq{} of signs.

\subsubsection{The riemannian case}

Let $(M,g)$ be a riemannian manifold, $\nabla$ its Levi--Civita connection and $R$ its riemannian curvature. To this data it is attached the classical notion of \emph{sectional curvature} $\mathcal K_g$ of $g$. It is a function which assigns to each $2$-plane $\pi=\operatorname{Span}(v,w)$ in $T_M$ the real number
$$
\mathcal K_g(\pi)=-\frac{\langle R(v,w)\cdot v,w\rangle_g}{||v||^2_g||w||^2_g-|\langle v,w\rangle_g|^2}.
$$
One can verify that this function completely determines the riemannian curvature tensor. One usually also considers other \lq\lq easier\rq\rq{} tensors obtained by performing some type of contractions on $R$, for instance the \emph{Ricci curvature} $r_g$ and the \emph{scalar curvature} $s_g$. The former is a symmetric $2$-tensor defined by
$$
r_g(u,v)=\operatorname{tr}_{T_M}\bigl(w\mapsto R(w,u)\cdot v\bigr).
$$
The latter is the real function on $M$ obtained by taking the trace of the Ricci curvature with respect to $g$:
$$
s_g=\operatorname{tr}_{g}r_g.
$$
One can straightforwardly show that, up to a positive factor which depends only on the dimension of $M$, the Ricci curvature can be obtained as an average of sectional curvatures and the scalar curvature as an average of Ricci curvatures. In particular the sign of the sectional curvature \lq\lq dominates\rq\rq{} the sign of the Ricci curvature which, in turn, \lq\lq dominates\rq\rq{} the sign of the scalar curvature. 

\subsubsection{The hermitian case}

We now look at the complex case. We start more generally with the notion of \emph{Griffiths curvature} for a holomorphic hermitian vector bundle $(E,h)$ over a complex manifold $X$. In this situation, we also have a unique connection both compatible with the metric and the holomorphic structure on $E$, whose curvature we still denote by $\Theta$. 

It is a $(1,1)$-form with values in the anti-hermitian endomorphisms of $(E,h)$. The Griffiths curvature assigns to each pair $(v,\zeta)\in T_{X,x}^{1,0}\times E_x$, $x\in X$, the real number given by
$$
\theta_{E,h}(v,\zeta)=\langle\Theta(v,\bar v)\cdot\zeta,\zeta\rangle_h.
$$
It has the remarkable property (which is a special case of what is called more generally Griffiths formulae) that it decreases when passing to holomorphic subbundles. Suppose that $S\subseteq E$ is a holomorphic subbundle of $E$ and endow it with the restriction metric $h|_S$. Then, given $x\in X$, $\zeta\in S_x\subseteq E_x$, and $v\in T_{X,x}^{1,0}$, we always have
$$
\theta_{S,h|_S}(v,\zeta)\le\theta_{E,h}(v,\zeta).
$$
Now, we look more closely at the special case where $E=T^{1,0}_X$, and the hermitian metric is $\tilde h$ as above. In this case, the Griffiths curvature is nothing but (up to normalization) what is classically called \emph{holomorphic bisectional curvature}. To be more precise, given a point $x\in X$ and two non-zero holomorphic tangent vectors $v,w\in T^{1,0}_{X,x}\setminus\{0\}$ we define the holomorphic bisectional curvature in the directions given by $v,w$ as 
$$
\operatorname{HBC}_h(x,[v],[w])=\frac{\theta_{T_X^{1,0},\tilde h}(v,w)}{||v||^2_{\tilde h}||w||^2_{\tilde h}}.
$$
By a slight abuse of notation, we may possibly confuse and interchange $h$, $\tilde h$ and $\omega$ in what follows.

The \emph{holomorphic sectional curvature} is defined to be the restriction of the holomorphic bisectional curvature to the diagonal:
$$
\operatorname{HSC}_h(x,[v])=\operatorname{HBC}_h(x,[v],[v])=\frac{\theta_{T_X^{1,0},\tilde h}(v,v)}{||v||^4_{\tilde h}}.
$$

In the spirit of the riemannian case, we can construct a closed real $(1,1)$-form, the \emph{Chern--Ricci form}, by taking the trace with respect to the endomorphism part of the Chern curvature. We also normalize it in such a way that its cohomology class coincides with the first Chern class of the manifold, namely:
$$
\operatorname{Ric}_\omega=\frac i{2\pi}\operatorname{tr}_{T^{1,0}_X}\Theta.
$$
The new feature here is that the Chern--Ricci tensor is a $2$-form always belonging to a fixed cohomology class, the first Chern class of $X$, independently of the choice of the metric. This is because, in general, the trace of the curvature of a vector bundle is the curvature of the induced connection on the determinant bundle, which in this case is the dual of the canonical bundle of $X$. By taking again the trace, but this time with respect to $\omega$, we get what is called the \emph{Chern scalar curvature}. Thus, it is by definition the unique real function $\operatorname{scal}_\omega\colon X\to\mathbb R$ such that
$$
\operatorname{Ric}_\omega\wedge\frac{\omega^{n-1}}{(n-1)!}=\operatorname{scal}_\omega\,\frac{\omega^{n}}{n!}.
$$

\subsubsection{Negativity of the holomorphic sectional curvature and hyperbolicity}

Before going further and explore ---when the metric is Kähler--- the links among the different notions of curvature we introduced in the riemannian and hermitian setting, we would like to explain here how the negativity of the holomorphic sectional curvature implies the Kobayashi hyperbolicity of the manifold. We want to do it here before entering the Kähler world, since this is a purely hermitian fact. Before proceeding further, let us recall the very basic definitions and notions about Kobayashi hyperbolicity (for an exhaustive treatment we refer to \cite{Kob05,Kob98}).

\medskip

Let $X$ be a complex space. We call a \emph{holomorphic disc} in $X$ a holomorphic map from the complex unit disc $\Delta$ to $X$. Given two points $p,q\in X$, consider a \emph{chain of holomorphic discs} from $p$ to $q$, that is a chain of points $p=p_0,p_1,\dots,p_k=q$ of $X$, pairs of point $a_1,b_1,\dots,a_k,b_k$ of $\Delta$ and holomorphic maps $f_1,\dots,f_k\colon\Delta\to X$ such that
$$
f_i(a_i)=p_{i-1},\quad f_i(b_i)=p_i,\quad i=1,\dots,k.
$$
Denoting this chain by $\alpha$, define its length $\ell(\alpha)$ by
$$
\ell(\alpha)=\rho(a_1,b_1)+\cdots+\rho(a_k,b_k)
$$ 
and a pseudodistance $d_X$ on $X$ by 
$$
d_X(p,q)=\inf_{\alpha}\ell(\alpha).
$$
This is the \emph{Kobayashi pseudodistance} of $X$.

\begin{definition}
The complex space $X$ is said to be \emph{Kobayashi hyperbolic} if the pseudodistance $d_X$ is actually a distance.
\end{definition}

For $\Delta$ the complex unit disc, it is easy to see using the usual Schwarz--Pick lemma\footnote{We recall here that the usual Schwarz-Pick lemma says that for $f\colon\Delta\to\Delta$ a holomorphic map, one has the following inequality:
$$
\frac{|f'(\zeta)|}{1-|f(\zeta)|^2}\le\frac{1}{1-|\zeta|^2}.
$$
This means exactly that holomorphic maps contract the Poincaré metric.} in one direction and the identity transformation in the other that $d_X=\rho$. Then $\Delta$ is hyperbolic. The entire complex plane is not hyperbolic: indeed the Kobayashi pseudodistance is identically zero. To see this from the very definition, take any two point $z_1,z_2\in\mathbb C$ and consider a sequence of holomorphic discs 
$$
\begin{aligned}
f_j\colon & \Delta\to\mathbb C \\
& \zeta\to z_1+j\zeta(z_2-z_1).
\end{aligned}
$$
It is important to remark here that the non hyperbolicity of the complex plane is connected to the possibility of taking larger and larger discs in $\mathbb C$. 

It is immediate to check that the Kobayashi pseudodistance has the fundamental property of being contracted by holomorphic maps: given two complex spaces $X$ and $Y$ and a holomorphic map $f\colon X\to Y$ one has for every pair of point $x,y$ in $X$
$$
d_Y(f(x),f(y))\le d_X(x,y).
$$ 
In particular, a Kobayashi hyperbolic complex space cannot contain any complex subspace which is not Kobayashi hyperbolic. 

\begin{remark}In other words, begin Kobayashi hyperbolic is an hereditary property for complex subspaces, analogously to what happens for the negativity of holomorphic sectional curvature, as prescribed by Griffiths' formulae. 
\end{remark}

The distance decreasing property together with the fact that the Kobayashi pseudodistance is identically zero on $\mathbb C$, implies immediately.

\begin{proposition}\label{distdecr}
If $X$ is a hyperbolic complex space, then every holomorphic map $f\colon\mathbb C\to X$ is constant.
\end{proposition}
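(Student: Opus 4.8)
The plan is to derive the statement formally from the two properties of the Kobayashi pseudodistance recorded above: its monotonicity under holomorphic maps, and its identical vanishing on $\mathbb{C}$.

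First I would spell out why $d_{\mathbb{C}}\equiv 0$, completing the computation sketched in the excerpt. Fix $z_1,z_2\in\mathbb{C}$ and, for each $j\ge 1$, consider the single holomorphic disc $f_j\colon\Delta\to\mathbb{C}$, $\zeta\mapsto z_1+j\zeta(z_2-z_1)$. Since $f_j(0)=z_1$ and $f_j(1/j)=z_2$, this is a chain of holomorphic discs from $z_1$ to $z_2$ of length $\rho(0,1/j)$, which tends to $0$ as $j\to\infty$. Taking the infimum over all chains gives $d_{\mathbb{C}}(z_1,z_2)=0$, and since $z_1,z_2$ were arbitrary, $d_{\mathbb{C}}$ vanishes identically.

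Next, given any holomorphic map $f\colon\mathbb{C}\to X$ and any two points $z_1,z_2\in\mathbb{C}$, the distance-decreasing property gives
$$
d_X(f(z_1),f(z_2))\le d_{\mathbb{C}}(z_1,z_2)=0.
$$
Because $X$ is Kobayashi hyperbolic, $d_X$ is an honest distance, so $d_X(f(z_1),f(z_2))=0$ forces $f(z_1)=f(z_2)$; as $z_1,z_2$ range over $\mathbb{C}$ this says exactly that $f$ is constant.

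I do not expect any real obstacle here: the proposition is an immediate consequence of the contraction property together with the non-hyperbolicity of $\mathbb{C}$, and the only (entirely routine) point requiring verification is the vanishing $d_{\mathbb{C}}\equiv 0$, for which the explicit family $(f_j)_{j\ge 1}$ above suffices. The genuine content sits upstream, in the distance-decreasing property of the Kobayashi pseudodistance, which we are allowed to take for granted.
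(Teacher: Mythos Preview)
Your proof is correct and is exactly the argument the paper has in mind: the proposition is stated there as an immediate consequence of the distance-decreasing property together with the vanishing $d_{\mathbb C}\equiv 0$, and you have simply written out those two ingredients explicitly.
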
 

Let us now come at the infinitesimal analogue of the Kobayashi pseudodistance introduced above. For simplicity, we shall suppose that $X$ is a \emph{smooth} complex manifold but most of the things would work on an arbitrary singular complex space.

So, fix an arbitrary holomorphic tangent vector $v\in T_{X,x_0}$, $x_0\in X$: we want to give it an intrinsic length. Thus, define
$$
\mathbf{k}_{X}(v)=\inf\{\lambda>0\mid\exists f\colon\Delta\to X,\,f(0)=x_0,\,\lambda f'(0)=v\},
$$
where $f\colon\Delta\to X$ is holomorphic. Even with this infinitesimal form, it is straightforward to check that holomorphic maps between complex manifolds contract it and that in the case of the complex unit disc, it agrees with the Poincaré metric.

\begin{definition}
Let $X$ be a complex manifold and $\omega$ an arbitrary hermitian metric on $X$. We say that $X$ is \emph{infinitesimally Kobayashi hyperbolic} if $\mathbf{k}_{X}$ is positive definite on each fiber and satisfies a uniform lower bound
$$
\mathbf{k}_{X}(v)\ge\varepsilon ||v||_{\omega}
$$ 
when $v\in T_{X,x}$ and $x\in X$ describes a compact subset of $X$.
\end{definition}

The Kobayashi pseudodistance is the integrated form of the corresponding infinitesimal pseudometric (this is due to Royden).

\begin{theorem}[{\cite[(3.5.31) Theorem]{Kob98}}]
Let $X$ be a complex manifold. Then
$$
d_X(p,q)=\inf_\gamma\int_\gamma\mathbf{k}_{X}(\gamma'(t))\,dt,
$$
where the infimum is taken over all piecewise smooth curves joining $p$ to $q$.
\end{theorem}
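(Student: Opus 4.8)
The plan is to show that $d_X$ agrees with the integrated pseudometric on the right; write $\hat d_X(p,q):=\inf_\gamma\int_\gamma\mathbf k_X(\gamma'(t))\,dt$, the infimum over piecewise smooth curves from $p$ to $q$. The inequality $\hat d_X\le d_X$ is the soft half, which I would settle first: given a chain of holomorphic discs $\alpha$ joining $p$ to $q$, with data $f_i\colon\Delta\to X$, $f_i(a_i)=p_{i-1}$, $f_i(b_i)=p_i$, replace each $f_i$ by its restriction to a Poincar\'e geodesic $\sigma_i\colon[0,1]\to\Delta$ running from $a_i$ to $b_i$. The concatenation $\Gamma:=(f_1\circ\sigma_1)*\cdots*(f_k\circ\sigma_k)$ is a piecewise smooth curve from $p$ to $q$, and because $\mathbf k_X$ is contracted by the holomorphic maps $f_i$ and the $\mathbf k_\Delta$-length of a Poincar\'e geodesic equals the distance of its endpoints,
$$
\int_\Gamma\mathbf k_X(\Gamma'(t))\,dt\le\sum_{i=1}^k\int_0^1\mathbf k_\Delta(\sigma_i'(t))\,dt=\sum_{i=1}^k\rho(a_i,b_i)=\ell(\alpha);
$$
taking the infimum over all chains gives $\hat d_X(p,q)\le d_X(p,q)$.

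The substance is the reverse inequality, and its heart is a local estimate: for every $t_0\in[0,1]$ and every $\varepsilon>0$ there is $\eta>0$ such that $d_X(\gamma(t_0),\gamma(t))\le(\mathbf k_X(\gamma'(t_0))+\varepsilon)\,|t-t_0|$ whenever $|t-t_0|<\eta$. To prove it I would set $\lambda:=\mathbf k_X(\gamma'(t_0))+\tfrac\varepsilon2$ and choose, from the definition of $\mathbf k_X$, a disc $f\colon\Delta\to X$ with $f(0)=\gamma(t_0)$ and $\lambda f'(0)=\gamma'(t_0)$. In a coordinate ball $B$ centred at $\gamma(t_0)$ one has $f(\zeta)=\gamma(t_0)+\lambda^{-1}\gamma'(t_0)\,\zeta+O(\zeta^2)$ and $\gamma(t)=\gamma(t_0)+\gamma'(t_0)(t-t_0)+o(t-t_0)$, so for $\zeta_t:=\lambda(t-t_0)$ the points $f(\zeta_t)$ and $\gamma(t)$ both lie in $B$ and differ there by $o(t-t_0)$. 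Using the triangle inequality, then distance decrease applied first to $f\colon\Delta\to X$ and then to the inclusion $B\hookrightarrow X$, together with the elementary fact that the Kobayashi distance of a ball is $\le C\,|\cdot|$ on a relatively compact subset,
$$
d_X(\gamma(t_0),\gamma(t))\le\rho(0,\zeta_t)+d_X(f(\zeta_t),\gamma(t))\le\tanh^{-1}(\lambda|t-t_0|)+o(t-t_0)=\lambda|t-t_0|+o(t-t_0),
$$
which is $\le(\mathbf k_X(\gamma'(t_0))+\varepsilon)\,|t-t_0|$ once $t$ is sufficiently close to $t_0$.

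To globalise, I would use that $\phi(t):=\mathbf k_X(\gamma'(t))$ is bounded and upper semicontinuous: boundedness already follows from the chart comparison $\mathbf k_X\le\mathbf k_B$, and the upper semicontinuity of $\mathbf k_X$ on $T_X$ is Royden's lemma, which I would obtain by showing that a disc almost realizing $\mathbf k_X$ at a point can be perturbed into a disc through any sufficiently near point with any prescribed sufficiently near $1$-jet, at the cost of an arbitrarily small increase of the parameter. Fix $\varepsilon>0$; since $\phi$ is bounded and upper semicontinuous it is a pointwise decreasing limit of continuous functions, so there is a continuous $g\ge\phi$ with $\int_0^1 g\le\int_0^1\phi+\varepsilon$. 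The local estimate, together with $|D(s')-D(s)|\le d_X(\gamma(s),\gamma(s'))$, shows that $D(s):=d_X(\gamma(0),\gamma(s))$ is continuous with $\limsup_{s'\to s}\frac{D(s')-D(s)}{s'-s}\le\phi(s)\le g(s)$ at every $s$; as $g$ is continuous, the classical mean value inequality for a function whose upper Dini derivative is dominated by a continuous function yields $D(1)-D(0)\le\int_0^1 g\le\int_0^1\phi+\varepsilon$. (For piecewise smooth $\gamma$ one splits $[0,1]$ at the finitely many corners.) Letting $\varepsilon\to0$ gives $d_X(p,q)\le\int_\gamma\mathbf k_X(\gamma'(t))\,dt$, and then $d_X(p,q)\le\hat d_X(p,q)$ after the infimum over $\gamma$; with the first paragraph this proves $d_X=\hat d_X$.

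The hard part will be the local estimate and the regularity underpinning its globalisation. Passing from the purely \emph{infinitesimal} datum $\mathbf k_X(\gamma'(t_0))$ to a bound on the \emph{genuine} distance $d_X(\gamma(t_0),\gamma(t))$ forces a correction of the extremal disc so that it actually reaches $\gamma(t)$, which is exactly why the comparison of $d_X$ with the Euclidean metric in a chart must enter. And the globalisation depends essentially on the upper semicontinuity of $\mathbf k_X$: it is precisely what permits majorising $\phi$ by a continuous function of nearly the same integral — something impossible for a general bounded measurable integrand, such as the indicator of the rationals — and hence makes the mean value inequality applicable. Thus Royden's lemma on the upper semicontinuity of $\mathbf k_X$ is the one genuinely non-formal ingredient.
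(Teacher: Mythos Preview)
The paper does not prove this theorem: it is stated with a reference to Kobayashi's book and attributed to Royden, with no accompanying proof. So there is no ``paper's own proof'' to compare against.

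Your argument is essentially Royden's original one and is correct in outline. The easy inequality $\hat d_X\le d_X$ via concatenation of Poincar\'e geodesics is standard. For the reverse inequality, your local estimate is the right mechanism: pick an almost-extremal disc at $\gamma(t_0)$, use it to reach a point $f(\zeta_t)$ that differs from $\gamma(t)$ by $o(t-t_0)$ in a coordinate chart, and absorb that discrepancy using the comparison $d_X\le d_B\le C|\cdot|$ on a relatively compact subset of a ball. The globalisation via upper semicontinuity of $\mathbf k_X$, approximation from above by a continuous $g$, and the Dini-derivative mean value inequality is also the standard route and is sound.

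Two small points worth tightening if you write this up formally. First, in the local estimate you need $\zeta_t\in\Delta$, i.e.\ $\lambda|t-t_0|<1$; this is automatic for $t$ close to $t_0$ but should be said. Second, your Dini-derivative argument uses only the \emph{upper right} derivative bound $\limsup_{h\to 0^+}(D(s+h)-D(s))/h\le g(s)$ together with continuity of $D$; the continuity of $D$ itself follows from the local estimate (which in particular gives local Lipschitz bounds), so the hypotheses of the mean value inequality are met. With these made explicit, the proof is complete.
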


In particular, if $X$ is infinitesimally hyperbolic, then it is hyperbolic.
Next theorem, which is due to Brody, is the simplest and most useful criterion for hyperbolicity. It gives a converse of Proposition \ref{distdecr} in the case where the target $X$ is compact. Fix any hermitian metric $\omega$ on the compact complex manifold $X$; we say that a holomorphic map $f\colon\mathbb C\to X$ is an \emph{entire curve} if it is non constant and that it is a \emph{Brody curve} if it is an entire curve with bounded derivative with respect to $\omega$ (or, of course, any other hermitian metric).

\begin{theorem}[{\cite{Bro78}, see \cite[(3.6.3) Theorem]{Kob98} for a more modern account}]\label{Brody}
Let $X$ be a compact complex manifold. If $X$ is not (infinitesimally) hyperbolic then there exists a Brody curve in $X$.
\end{theorem}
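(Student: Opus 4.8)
The plan is to argue by contraposition and to produce the Brody curve as a limit of suitably reparametrized holomorphic discs. First I would observe that if $X$ is \emph{not} infinitesimally hyperbolic then, since $X$ is compact, there must exist holomorphic discs $f_j\colon\Delta\to X$ with $||f_j'(0)||_\omega\to+\infty$: otherwise a uniform bound $||f'(0)||_\omega\le C$ valid for every holomorphic $f\colon\Delta\to X$ would force $\mathbf{k}_X(v)\ge C^{-1}||v||_\omega$ for all $v$ (if $\lambda f'(0)=v$ then $\lambda=||v||_\omega/||f'(0)||_\omega\ge||v||_\omega/C$), which is exactly infinitesimal hyperbolicity on all of $X$. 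The naive continuation would be to rescale, $g_j(\zeta)=f_j\bigl(\zeta/||f_j'(0)||_\omega\bigr)$ on the disc of radius $||f_j'(0)||_\omega$, obtaining maps with $||g_j'(0)||_\omega=1$ defined on larger and larger discs exhausting $\mathbb{C}$, and then to let $j\to\infty$. The difficulty --- and this is the crux of the whole argument --- is that there is no a priori control of $||g_j'||_\omega$ away from the origin, so one cannot directly extract a convergent subsequence.

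The remedy is Brody's reparametrization lemma: \emph{for every holomorphic $f\colon\Delta\to X$ there is a holomorphic $g\colon\Delta\to X$ of the form $g=f\circ\phi$ with $\phi\in\operatorname{Aut}(\Delta)$ such that $||g'(0)||_\omega\ge||f'(0)||_\omega$ and $(1-|\zeta|^2)\,||g'(\zeta)||_\omega\le||g'(0)||_\omega$ for all $\zeta\in\Delta$.} I would prove it as follows: the function $u(\zeta)=(1-|\zeta|^2)\,||f'(\zeta)||_\omega$ is continuous on $\Delta$ and tends to $0$ at the boundary, hence attains its maximum at some $a\in\Delta$; letting $\phi_a\in\operatorname{Aut}(\Delta)$ be the automorphism with $\phi_a(0)=a$, the Schwarz--Pick equality $(1-|\zeta|^2)\,|\phi_a'(\zeta)|=1-|\phi_a(\zeta)|^2$ gives, for $g=f\circ\phi_a$,
$$
(1-|\zeta|^2)\,||g'(\zeta)||_\omega=\bigl(1-|\phi_a(\zeta)|^2\bigr)\,||f'(\phi_a(\zeta))||_\omega=u\bigl(\phi_a(\zeta)\bigr),
$$
which is therefore maximal at $\zeta=0$; evaluating at $0$ yields $||g'(0)||_\omega=u(a)=\max u\ge u(0)=||f'(0)||_\omega$, and the claimed inequality is then immediate.

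Finally I would assemble the pieces. Applying the lemma to the $f_j$ above produces $g_j\colon\Delta\to X$ with $\rho_j:=||g_j'(0)||_\omega\ge||f_j'(0)||_\omega\to+\infty$ and $(1-|\zeta|^2)\,||g_j'(\zeta)||_\omega\le\rho_j$. Setting $G_j(\zeta)=g_j(\zeta/\rho_j)$ on the disc $\{|\zeta|<\rho_j\}$, one gets $||G_j'(0)||_\omega=1$ and, for $|\zeta|<\rho_j$,
$$
||G_j'(\zeta)||_\omega\le\frac{\rho_j^{\,2}}{\rho_j^{\,2}-|\zeta|^2}.
$$
Hence on every fixed disc $\{|\zeta|\le r\}$ the $G_j$ are, for $j$ large, uniformly Lipschitz with respect to the geodesic distance $d_\omega$ associated to $\omega$; as $X$ is compact, the Arzel\`a--Ascoli theorem together with a diagonal argument over $r=1,2,\dots$ yields a subsequence converging locally uniformly on $\mathbb{C}$ to a map $G\colon\mathbb{C}\to X$. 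Reading everything in local charts, the Weierstrass theorem shows $G$ is holomorphic and $G_{j_k}'\to G'$ locally uniformly; thus $||G'(0)||_\omega=1$, so $G$ is non-constant, while letting $k\to\infty$ in the inequality above gives $||G'(\zeta)||_\omega\le1$ for every $\zeta$. Therefore $G$ is a Brody curve. The only genuinely delicate point is the reparametrization lemma: the rest is soft analysis (normal families plus a diagonal extraction), whereas the lemma is exactly what converts ``large derivative at a single point'' into the uniform derivative bound needed for compactness.
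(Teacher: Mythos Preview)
The paper itself does not supply a proof of Brody's theorem; it is merely stated with references to \cite{Bro78} and \cite{Kob98}. Your argument follows the classical route (Brody's reparametrization lemma followed by a normal-families extraction), which is indeed the standard proof found in those references.

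There is, however, one genuine gap in your proof of the reparametrization lemma. You assert that the function $u(\zeta)=(1-|\zeta|^2)\,||f'(\zeta)||_\omega$ ``tends to $0$ at the boundary'' and hence attains its maximum on $\Delta$. This is not justified: for a holomorphic map $f\colon\Delta\to X$ into a compact manifold there is no a priori bound on $||f'||_\omega$, and it may blow up faster than $(1-|\zeta|)^{-1}$ near $\partial\Delta$. For instance, compose $\zeta\mapsto(1-\zeta)^{-2}$ with the projection $\mathbb C\to\mathbb C/\Lambda$ onto a flat torus; then $(1-|\zeta|^2)\,||f'(\zeta)||_\omega\sim 4(1-\zeta)^{-2}\to+\infty$ as $\zeta\to 1$ along the real axis, so $u$ has no maximum and your choice of $a$ does not exist. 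The fix is standard and painless: first replace each $f_j$ by $\zeta\mapsto f_j(r\zeta)$ for some fixed $r\in(0,1)$. This new map extends holomorphically across $\overline\Delta$, so its derivative is continuous up to the boundary and your argument for the lemma goes through verbatim; the price is that $||f_j'(0)||_\omega$ is replaced by $r\,||f_j'(0)||_\omega$, which still tends to $+\infty$. Once this is repaired, the remainder of your proof (the rescaling, the derivative estimate $||G_j'(\zeta)||_\omega\le\rho_j^{2}/(\rho_j^{2}-|\zeta|^2)$, the Arzel\`a--Ascoli plus diagonal extraction, and the passage to the limit) is correct.
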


A first direct consequence of this theorem is that in the compact case, hyperbolicity and infinitesimal hyperbolicity are equivalent, since if $X$ is not infinitesimally hyperbolic then there exists an entire curve in $X$ and then two distinct points on this curve will have zero distance with respect to $d_X$. 
For more information on the localization of such a curve, we refer the reader to the remarkable results of \cite{Duv08}, which are also described in J. Duval's chapter of the present monograph.

The absence of entire holomorphic curves in a given complex manifold is often referred to as \emph{Brody hyperbolicity}. Thus, in the compact case, Brody hyperbolicity and Kobayashi hyperbolicity do coincide.

\medskip

We now come back to the fact that negativity of the holomorphic sectional curvature implies the Kobayashi hyperbolicity of the manifold. For our purposes, it is sufficient to deal with the smooth compact case even if more general statements can be established.

\begin{theorem}[{\cite[Theorem 4.1]{Kob05}}]
Let $(X,\omega)$ be a compact hermitian manifold such that we have $\operatorname{HSC}_\omega<0$. Then, $X$ is Kobayashi hyperbolic.
\end{theorem}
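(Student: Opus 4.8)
The plan is to prove the \emph{infinitesimal} form of hyperbolicity — a uniform lower bound $\mathbf{k}_X(v)\ge\varepsilon\,\|v\|_\omega$ — from which Kobayashi hyperbolicity follows by Royden's integration theorem quoted above. Since $X$ is compact and $\operatorname{HSC}_\omega$ is continuous and everywhere negative, there is a constant $\kappa_0>0$ with $\operatorname{HSC}_\omega(x,[v])\le-\kappa_0$ for all $x\in X$ and all $0\ne v\in T^{1,0}_{X,x}$. Fix an arbitrary holomorphic disc $f\colon\Delta\to X$; the whole problem reduces to bounding $\|f'(0)\|_\omega$ from above by a constant depending only on $\kappa_0$. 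Such a bound being uniform, it immediately yields $\mathbf{k}_X\ge\varepsilon\,\|\cdot\|_\omega$ with $\varepsilon$ depending only on $\kappa_0$.

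Write $f^*\omega=i\,u\,dz\wedge d\bar z$ with $u\ge0$ smooth on $\Delta$, so that $u(0)=\|f'(0)\|_\omega^2$. On the open set $\{u>0\}$ the map $f$ is an immersion and its image is locally a Riemann surface $C\subset X$; the induced metric $\omega|_C$ is automatically K\"ahler (any hermitian metric on a Riemann surface is), so its Chern–curvature holomorphic sectional curvature is just the classical Gaussian curvature, and by the monotonicity of the Griffiths curvature under passing to the holomorphic subbundle $T^{1,0}_C\subseteq T^{1,0}_X|_C$ — recalled in the excerpt — this Gaussian curvature is $\le\operatorname{HSC}_\omega(\,\cdot\,,T_C)\le-\kappa_0$. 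Using that the Gaussian curvature of $u\,|dz|^2$ equals $-\tfrac{2}{u}\,\partial_z\partial_{\bar z}\log u$, this translates into the differential inequality
$$
\partial_z\partial_{\bar z}\log u\ \ge\ \tfrac{\kappa_0}{2}\,u\qquad\text{wherever }u>0 .
$$

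The core of the argument is then the Ahlfors--Schwarz lemma. For $0<r<1$, on the disc $\Delta_r=\{|z|<r\}$ introduce the complete metric of constant Gaussian curvature $-\kappa_0$, namely $\lambda_r(z)=\dfrac{(4/\kappa_0)\,r^2}{(r^2-|z|^2)^2}$, which blows up along $\partial\Delta_r$ and satisfies $\partial_z\partial_{\bar z}\log\lambda_r=\tfrac{\kappa_0}{2}\lambda_r$. The quotient $u/\lambda_r$ extends continuously by $0$ to $\overline{\Delta_r}$, hence attains its maximum at some interior point $z_0$; if $u(z_0)=0$ then $u\equiv0$ on $\Delta_r$, otherwise $\log(u/\lambda_r)$ is smooth near $z_0$ with a maximum there, so $0\ge\partial_z\partial_{\bar z}\log(u/\lambda_r)(z_0)\ge\tfrac{\kappa_0}{2}\bigl(u(z_0)-\lambda_r(z_0)\bigr)$ and thus $(u/\lambda_r)(z_0)\le1$. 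In all cases $u\le\lambda_r$ on $\Delta_r$; letting $r\to1$ gives $u(z)\le(4/\kappa_0)(1-|z|^2)^{-2}$ on $\Delta$, and in particular $\|f'(0)\|_\omega^2=u(0)\le 4/\kappa_0$.

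Consequently, for every $v\in T_{X,x}$ and every holomorphic $f\colon\Delta\to X$ with $f(0)=x$ and $\lambda f'(0)=v$ one gets $\|v\|_\omega=|\lambda|\,\|f'(0)\|_\omega\le|\lambda|\cdot 2/\sqrt{\kappa_0}$, whence $\mathbf{k}_X(v)\ge\tfrac{\sqrt{\kappa_0}}{2}\,\|v\|_\omega$ uniformly on $X$; so $X$ is infinitesimally Kobayashi hyperbolic, hence Kobayashi hyperbolic. (Alternatively one avoids Royden's theorem and invokes Brody's criterion from the excerpt: a Brody curve $g\colon\mathbb C\to X$, restricted to a disc of radius $R$ and reparametrised to $\Delta$, would violate $\|f'(0)\|_\omega^2\le4/\kappa_0$ as $R\to\infty$ unless $g'\equiv0$.) The only genuinely delicate point is the curvature comparison on $\{u>0\}$: it is crucial that ``holomorphic sectional curvature'' here means the one attached to the Chern connection, since it is precisely this notion that enjoys Griffiths monotonicity under holomorphic subbundles even when $\omega$ is merely hermitian and not K\"ahler.
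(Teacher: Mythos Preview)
Your proof is correct and follows the classical Ahlfors--Schwarz route, which the paper in fact explicitly flags as an alternative just before its own argument (\emph{``Brody's theorem is not really needed for the proof and can be for instance replaced by the Ahlfors--Schwarz lemma''}). The paper itself takes a different path: it invokes Brody's criterion to reduce to excluding entire curves $f\colon\mathbb C\to X$ with bounded derivative, and then shows by an explicit local computation in a normal frame for the Chern connection that $\log\|f'\|^2_\omega$ is subharmonic on all of $\mathbb C$ (approximating by $\log(\|f'\|^2_\omega+\varepsilon)$ to handle the zeros of $f'$) and strictly subharmonic where $f'\ne 0$; since it is also bounded above, the Liouville theorem for subharmonic functions forces it to be constant, a contradiction.

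What each approach buys: yours is more quantitative, yielding directly the explicit comparison $\mathbf k_X\ge\tfrac{\sqrt{\kappa_0}}{2}\,\|\cdot\|_\omega$ and hence infinitesimal hyperbolicity without any appeal to Brody; it also packages the curvature input cleanly via Griffiths monotonicity for the line subbundle spanned by $f'$. The paper's computation, by contrast, unfolds the curvature contribution term by term in coordinates, which makes transparent exactly where the inequality $\sum c_{jklm}v_j\bar v_k v_l\bar v_m<-\kappa|v|^4$ is used in the second-order expansion of $\|f'\|^2_\omega$---pedagogically useful in a chapter that later relies on the same kind of pointwise curvature estimates. Both arguments hinge on the same core fact you identify at the end: it is the \emph{Chern}-connection holomorphic sectional curvature that decreases on holomorphic subbundles, so the hermitian (non-K\"ahler) hypothesis suffices.
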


In the next section we will see that the negativity of the holomorphic sectional curvature is a sufficient but not necessary condition for Kobayashi hyperbolicity. The following proof is somehow slightly different (more formally than substantially) from the several others that can be found in literature. Let us highlight in particular that Brody's theorem is not really needed for the proof and can be for instance replaced by the Ahlfors--Schwarz lemma.

\begin{proof}
By Theorem \ref{Brody}, it is sufficient to show that every holomorphic map $f\colon\mathbb C\to X$ whose derivative is $\omega$-bounded is constant. So let $f$ be such a map a consider the function
$$
\begin{aligned}
F\colon & \mathbb C\to\mathbb R\cup\{-\infty\} \\
& t\mapsto \log||f'(t)||^2_\omega,
\end{aligned}
$$
which is clearly upper semi-continuous and bounded from above. Suppose by contradiction that $F$ is not identically $-\infty$, which corresponds to the fact that $f$ is not constant. Then, of course, the locus where $\log||f'(t)||^2_\omega$ is $-\infty$ is a discrete set. We now check that $\log||f'(t)||^2_\omega$ is a subharmonic function on the whole $\mathbb C$, which is moreover strictly subharmonic over $\{f'\ne 0\}$. Since any bounded subharmonic function on $\mathbb C$ is constant \cite[Proposition 2.7.3]{Kli91}, this gives a contradiction, because a constant function cannot be strictly subharmonic somewhere.

First of all we show the subharmonicity of $\log||f'||^2_\omega$, by showing that for all positive integer $k$ the smooth  functions defined on the whole complex plane $\psi_\varepsilon=\log(||f'||^2_\omega+\varepsilon)$ are subharmonic, \textsl{i.e.} $i\partial\bar\partial\psi_\varepsilon\ge 0$. For, since
$$
\log||f'||^2_\omega=\lim_{\varepsilon\to 0}\psi_\varepsilon
$$
pointwise, and the sequence $\{\psi_\varepsilon\}$ is decreasing, then the subharmonicity of $\log||f'||^2_\omega$ follows from \cite[Theorem 2.6.1, (ii)]{Kli91}. 

So, fix $\varepsilon>0$ and consider a point $t_0\in\mathbb C$. Call $x_0=f(t_0)\in X$ and choose holomorphic coordinates $(z_1,\dots,z_n)$ for $X$ centered at $x_0$ so that $x_0$ corresponds to $z=0$, and write $f=(f_1,\dots,f_n)$ for $f$ in these coordinates. Moreover, chose a normal coordinate frame $\{e_1,\dots,e_n\}$ for $(T_X,\omega)$ at $x_0$ \cite[(12.10) Proposition]{Dem}. With this choice we have that
$$
\langle e_l(z),e_m(z)\rangle_\omega=\delta_{lm}-\sum_{j,k=1}^nc_{jklm}\,z_j\bar z_k +O(|z|^3),
$$
where the $c_{jklm}$'s are the coefficients of the Chern curvature of $\omega$. Observe that, since the metric is not supposed to be Kähler, we can \textsl{a priori} not chose holomorphic coordinates around $x_0$ such that the $e_l$'s can be taken simply to be $\partial/\partial z_l$; nevertheless, by a constant change of coordinates, we can suppose that $e_l(x_0)$ equals $\partial/\partial z_l(x_0)$, at least at $x_0$. Now, of course there exist holomorphic functions $\varphi_j$, $j=1,\dots,n$, defined on a neighborhood of $t_0$ such that 
$$
f'(t)=\sum_{j=1}^n \varphi_j(t)\,e_j(t),
$$
so that around $t_0$ we have
$$
||f'(t)||^2_\omega=|\varphi(t)|^2-\sum_{j,k,l,m=1}^nc_{jklm}\, f_j(t)\overline{f_k(t)}\varphi_l(t)\overline{\varphi_m(t)}+O(|f|^3).
$$
Moreover, we have $f'_j(t_0)=\varphi_j(t_0)$ for all $j$, since the $e_l$'s and the $\partial/\partial z_l$'s agree at $t_0$.

Remark that, since $X$ is compact, there exists a positive constant $\kappa$ such that $\HSC_\omega<-\kappa$. This condition reads in our coordinates
$$
\sum_{j,k,l,m=1}^n c_{jklm}\, v_j\bar v_k v_l\bar v_m< -\kappa |v|^4, \quad\forall v=(v_1,\dots, v_n)\in \mathbb C^n.
$$
Now, we have to compute $\partial||f'||^2_\omega$, $\bar\partial||f'||^2_\omega$ and $\partial\bar\partial||f'||^2_\omega$ at $t_0$, since
$$
\partial\bar\partial\psi_\varepsilon=\frac{-1}{(||f'||^2_\omega+\varepsilon)^2}\partial||f'||^2_\omega\wedge\bar\partial||f'||^2_\omega+\frac 1{||f'||^2_\omega+\varepsilon}\partial\bar\partial||f'||^2_\omega.
$$
We find
$$
\begin{aligned}
&\partial||f'||^2_\omega|_{t_0}=\langle \varphi'(t_0),f'(t_0)\rangle \,dt, \\
&\bar\partial||f'||^2_\omega|_{t_0}=\langle f'(t_0),\varphi'(t_0)\rangle\,d\bar t, \\
&i\partial\bar\partial||f'||^2_\omega|_{t_0}  =i\biggl(|\varphi'(t_0)|^2-\sum_{j,k,l,m=1}^nc_{jklm}\,f'_j(t_0)\overline{f'_k(t_0)}f'_l(t_0)\overline{f'_m(t_0)}\biggr)\,dt\wedge d\bar t \\
&\qquad\qquad\qquad >i\bigl(|\varphi'(t_0)|^2+\kappa |f'(t_0)|^4\bigr)\,dt\wedge d\bar t,
\end{aligned}
$$
where the brackets just mean the standard hermitian product in $\mathbb C^n$. Putting all this together we obtain
$$
\begin{aligned}
i\partial\bar\partial\psi_\varepsilon|_{t_0} &>i\biggl( \frac{-|\langle f'(t_0),\varphi'(t_0) \rangle|^2}{(|f'(t_0)|^2+\varepsilon)^2}+\frac{|\varphi'(t_0)|^2+\kappa |f'(t_0)|^4}{|f'(t_0)|^2+\varepsilon}\biggr)\,dt\wedge d\bar t \\
&\ge i\biggl( \frac{-|f'(t_0)|^2|\varphi'(t_0)|^2}{(|f'(t_0)|^2+\varepsilon)^2}+\frac{|\varphi'(t_0)|^2+\kappa |f'(t_0)|^4}{|f'(t_0)|^2+\varepsilon}\biggr)\,dt\wedge d\bar t \\
&=i\,\frac{\kappa|f'(t_0)|^6+\varepsilon\bigl(|\varphi'(t_0)|^2+\kappa|f'(t_0)|^4\big)}{(|f'(t_0)|^2+\varepsilon)^2}\,dt\wedge d\bar t \ge 0,
\end{aligned}
$$
where we used  Cauchy--Schwarz for the second inequality, and so $\psi_\varepsilon$ is subharmonic at each point.

To conclude, observe that the very same computation with $\varepsilon=0$ makes sense away from points where $f'=0$, and give moreover strict positivity for $i\partial\bar\partial\log ||f'||^2_\omega$ at these points. Which means that, away from $\{f'=0\}$, $\log ||f'||^2_\omega$ is strictly subharmonic, as desired.
\end{proof}

\subsubsection{The Kähler case}

Suppose now that $(X,\omega)$ is a Kähler manifold, with $\omega=-\Im h$. Let $(X,g=\Re h)$ be the underlying riemannian manifold. We saw that this is precisely the case when $\Theta$ and $R$ correspond to each other via $\xi$. In this setting, using $\xi$, one can show easily the following useful relation between the holomorphic bisectional curvature and the riemannian sectional curvature. Let $v,w\in T_{X,x}$ be two independent (real) tangent vectors. Then,
\begin{multline*}
\operatorname{HBC}_\omega(x,[\xi(v)],[\xi(w)])=\frac{||Jv||^2_g||w||^2_g-|\langle Jv,w\rangle_g|^2}{||v||^2_g||w||^2}\,\mathcal K_g\bigl(\operatorname{Span}\{Jv,w\}\bigr)\\
+\frac{||v||^2_g||w||^2_g-|\langle v,w\rangle_g|^2}{||v||^2_g||w||^2}\,\mathcal K_g\bigl(\operatorname{Span}\{v,w\}\bigr).
\end{multline*}
In particular, if $\mathcal K_g$ has a sign at a certain point, so does $\operatorname{HBC}_\omega$, and the sign is the same. Moreover, by specializing at the diagonal, we get
$$
\operatorname{HSC}_\omega(x,[\xi(v)])=\mathcal K_g\bigl(\operatorname{Span}\{Jv,v\}\bigr),
$$
that is, the holomorphic sectional curvature is nothing but the riemannian sectional curvature computed on complex $2$-planes.

In the Kähler setting, not surprisingly, the riemannian Ricci curvature and the Chern--Ricci forms as well as the corresponding scalar curvatures are related to each other. The precise relation is as follows, for $v,w\in T_{X,x}$:
$$
\operatorname{Ric}_\omega\bigl(\xi(v),\overline{\xi(w)}\bigr)=\frac i{4\pi}\bigl(r_g(v,w)-ir_g(Jv,w)\bigr).
$$
In particular, since for $g$ the real part of a Kähler metric the Ricci tensor is $J$-invariant, we get that
$$
-i\operatorname{Ric}_\omega\bigl(\xi(v),\overline{\xi(v)}\bigr)=\frac 1{4\pi}r_g(v,v),
$$
that is $\operatorname{Ric}_\omega$ is a positive (resp. semi-positive, negative, semi-negative) $(1,1)$-form if and only if $r_g$ is a positive (resp. semi-positive, negative, semi-negative) symmetric bilinear form (observe that $-i\operatorname{Ric}_\omega\bigl(\bullet,\bar\bullet\bigr)$ is nothing but the real quadratic form associated to the real $(1,1)$-form $\operatorname{Ric}_\omega$). From this we also infer that
$$
\operatorname{scal}_\omega=\frac 1{4\pi}\,s_g.
$$
\begin{remark}\label{totscalcurv}
In the compact Kähler case, since both $\omega$ and $\operatorname{Ric}_\omega$ are closed forms, the total scalar curvature becomes a cohomological invariant, namely
$$
\begin{aligned}
\int_X s_g\,dV_g &=4\pi\int_X\operatorname{scal}_\omega\frac{\omega^n}{n!}\\
&=4\pi\int_X\operatorname{Ric}_\omega\wedge\frac{\omega^{n-1}}{(n-1)!}\\
&=\frac{4\pi}{(n-1)!}\,c_1(X)\cdot[\omega]^{n-1}.
\end{aligned}
$$
In particular, the total scalar curvature of a compact Kähler manifold with vanishing first Chern class must always be zero. This observation will be useful later.
\end{remark}

Now we explain, still in the Kähler case,  the link between the signs respectively of the holomorphic bisectional curvature and Ricci curvature, and of the holomorphic sectional curvature with the scalar curvature.

\begin{proposition}\label{average}
Let $(X,\omega)$ be compact Kähler manifold of complex dimension $n$, $x_0\in X$, and $v\in T^{1,0}_{X,x_0}\setminus\{0\}$. Then, we have
$$
-\frac{2\pi i}{n||v||^2_{\tilde h}}\operatorname{Ric}_\omega(v,\bar v)=\dashint_{S^{2n-1}} \HBC_\omega([v],[w])\,d\sigma(w),
$$
and
$$
\frac{4\pi}{n(n+1)}\scal_\omega(x_0)=\dashint_{S^{2n-1}}\HSC_\omega([v])\,d\sigma(v),
$$
where $d\sigma$ is the Lebesgue measure on the $\tilde h$-unit sphere $S^{2n-1}$ in $T^{1,0}_{X,x_0}$, and by $\dashint$ we mean taking the average, \textsl{i.e.}
$$
\dashint_{S^{2n-1}}=\frac{(n-1)!}{2\pi^n}\int_{S^{2n-1}}.
$$
\end{proposition}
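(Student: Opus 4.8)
The plan is to fix the point $x_0$, choose a $\tilde h$-orthonormal frame $\{e_1,\dots,e_n\}$ of $T^{1,0}_{X,x_0}$ (no normal coordinates are needed, since no derivatives of the metric enter), and reduce both identities to two elementary integrals over the Euclidean sphere $S^{2n-1}\subset\mathbb C^n$. First observe that both formulas are invariant under rescaling $v$: the left-hand sides are ratios and the right-hand sides depend only on $[v]$, so we may assume $\|v\|_{\tilde h}=1$ and parametrise the $\tilde h$-unit tangent vectors at $x_0$ by points of $S^{2n-1}$. Writing $R_{i\bar jk\bar l}:=\langle\Theta(e_i,\bar e_j)e_k,e_l\rangle_{\tilde h}$ for the components of the Chern curvature of $\omega$, the definitions recalled above give, for unit vectors $v=\sum_i v_i e_i$ and $w=\sum_k w_k e_k$,
$$
\operatorname{HBC}_\omega(x_0,[v],[w])=\sum_{i,j,k,l}R_{i\bar jk\bar l}\,v_i\bar v_j\,w_k\bar w_l,\qquad\operatorname{HSC}_\omega(x_0,[v])=\sum_{i,j,k,l}R_{i\bar jk\bar l}\,v_i\bar v_j\,v_k\bar v_l,
$$
while the normalisation $\operatorname{Ric}_\omega=\frac i{2\pi}\operatorname{tr}_{T^{1,0}_X}\Theta$ yields $\operatorname{Ric}_\omega(v,\bar v)=\frac i{2\pi}\sum_{i,j,k}R_{i\bar jk\bar k}\,v_i\bar v_j$ and $\operatorname{scal}_\omega(x_0)=\frac1{2\pi}\sum_{i,k}R_{i\bar ik\bar k}$.

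The heart of the matter is the evaluation of
$$
\dashint_{S^{2n-1}}w_k\bar w_l\,d\sigma(w)=\frac{\delta_{kl}}{n},\qquad\dashint_{S^{2n-1}}v_i\bar v_j\,v_k\bar v_l\,d\sigma(v)=\frac{\delta_{ij}\delta_{kl}+\delta_{il}\delta_{jk}}{n(n+1)}.
$$
I would derive these purely from the symmetries of the normalised measure. Invariance of $d\sigma$ under the coordinatewise unitary rotations $v_p\mapsto\lambda_p v_p$ with $|\lambda_p|=1$ forces the first average to be a multiple of $\delta_{kl}$ and the second (a $U(n)$-invariant tensor of the appropriate bidegree) to be a linear combination $c_1\delta_{ij}\delta_{kl}+c_2\delta_{il}\delta_{jk}$; the symmetry of the integrand under $i\leftrightarrow k$ gives $c_1=c_2$; and the constants are pinned down by integrating the identities $\sum_k|w_k|^2\equiv1$ and $\bigl(\sum_k|v_k|^2\bigr)^2\equiv1$ over $S^{2n-1}$, which give $nc=1$ and $n(n+1)c_1=1$. (Alternatively both averages follow from the classical beta-type formula for $\int_{S^{2n-1}}|v_1|^{2a_1}\cdots|v_n|^{2a_n}\,d\sigma$, but the invariance argument is shorter.)

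Granting this, the first formula is immediate: substituting the degree-two average into $\dashint_{S^{2n-1}}\operatorname{HBC}_\omega(x_0,[v],[w])\,d\sigma(w)$ collapses the sum over $k,l$ and leaves $\frac1n\sum_{i,j,k}R_{i\bar jk\bar k}v_i\bar v_j$, which is exactly $-\frac{2\pi i}{n}\operatorname{Ric}_\omega(v,\bar v)$; note that this step does not use the Kähler hypothesis at all. For the second formula, substituting the degree-four average into $\dashint_{S^{2n-1}}\operatorname{HSC}_\omega(x_0,[v])\,d\sigma(v)$ produces $\frac1{n(n+1)}\bigl(\sum_{i,k}R_{i\bar ik\bar k}+\sum_{i,k}R_{i\bar kk\bar i}\bigr)$, and this is the one place the Kähler condition is used: the first Bianchi (Kähler) symmetry $R_{i\bar jk\bar l}=R_{k\bar ji\bar l}$ gives $R_{i\bar kk\bar i}=R_{k\bar ki\bar i}$, so the two sums coincide and add up to $\frac{2}{n(n+1)}\sum_{i,k}R_{i\bar ik\bar k}=\frac{4\pi}{n(n+1)}\operatorname{scal}_\omega(x_0)$.

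The main obstacle is genuinely confined to the two sphere integrals — in particular getting the constant $1/(n(n+1))$ in the quartic one right — together with the correct slot juggling in the Kähler symmetry; everything else is bookkeeping of the factors $i$ and $2\pi$ coming from the normalisation of $\operatorname{Ric}_\omega$, and being explicit about which symmetries of $R_{i\bar jk\bar l}$ are invoked (Hermitian symmetry $R_{i\bar jk\bar l}=\overline{R_{j\bar il\bar k}}$ for reality of $\operatorname{HBC}_\omega$ and $\operatorname{HSC}_\omega$, and the Bianchi symmetry for the Kähler step).
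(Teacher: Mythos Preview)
Your proof is correct and follows essentially the same route as the paper: pick a $\tilde h$-unitary frame at $x_0$, express $\HBC_\omega$, $\HSC_\omega$, $\Ric_\omega$, $\scal_\omega$ in that frame, reduce both identities to the quadratic and quartic sphere averages, and invoke the K\"ahler symmetry $R_{i\bar k k\bar i}=R_{k\bar k i\bar i}$ to merge the two traces in the scalar-curvature formula. The only difference is presentational: the paper states the values $\dashint|v_j|^2|v_k|^2\,d\sigma$ case-by-case with a reference, whereas you package the quartic average as $(\delta_{ij}\delta_{kl}+\delta_{il}\delta_{jk})/n(n+1)$ and pin down the constant by integrating $(\sum_k|v_k|^2)^2\equiv 1$, which is a little cleaner and self-contained; your remark that the first identity does not actually need the K\"ahler hypothesis is also correct and worth keeping.
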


In particular, we find that the sign of the holomorphic bisectional curvature of a Kähler metric dominates the sign of the Ricci curvature, while the sign of the holomorphic sectional curvature dominates the sign of the scalar curvature. 

On the other hand, recall that the holomorphic sectional curvature of a Kähler metric \emph{completely} determines the curvature tensor \cite[Lemma 7.19]{Zhe00}, so that the point is really whether and how it does spread the sign.

\begin{proof}
Fix holomorphic coordinates around $x_0$ such that 
$$
\omega(x_0)=i\sum_{j=1}^ndz_j\wedge d\bar z_j,
$$ 
\textsl{i.e.} such that $\{\partial/\partial z_j\}_{j=1,\dots,n}$ is a $\tilde h$-unitary basis at $x_0$. Now write the Chern curvature tensor at $x_0$ in these coordinates, to get
$$
\Theta\bigl(T^{1,0}_X,\tilde h\bigr)_{x_0}=\sum_{j,k,l,m=1}^n c_{jklm}\,dz_j\wedge dz_k\otimes\biggl(\frac{\partial}{\partial z_l}\biggr)^*\otimes\frac{\partial}{\partial z_m}.
$$
Since $\omega$ is Kähler, and we have chosen a unitary basis at $x_0$, we have the well known Kähler symmetries for the coefficients $c_{jklm}$'s of the curvature:
$$
c_{jklm}=c_{lmjk}=c_{lkjm}=c_{jmlk}=\overline{c_{kjml}}.
$$
Now we express the holomorphic bisectional curvature in coordinates, to get
$$
\HBC_\omega([v],[w])=\frac 1{|v|^2|w|^2}\sum_{j,k,l,m=1}^nc_{jklm}\,v_j\bar v_k w_l\bar w_m,
$$
where $v=\sum_j v_j\,\partial/\partial z_j$ and $w=\sum_j w_j\,\partial/\partial z_j$.
In particular, 
$$
\HSC_\omega([v])=\frac 1{|v|^4}\sum_{j,k,l,m=1}^nc_{jklm}\,v_j\bar v_k v_l\bar v_m.
$$
Moreover, we get for the Chern--Ricci curvature and for the Chern--scalar curvature the following expressions:
$$
\Ric_\omega=\frac i{2\pi}\sum_{j,k,l=1}^n c_{jkll}\,dz_j\wedge d\bar z_k,
$$
and
$$
\scal_\omega=\frac 1{2\pi}\sum_{j,l}c_{jjll}.
$$
Now, we compute
$$
\dashint_{S^{2n-1}} \HBC_\omega([v],[w])\,d\sigma(w) = \frac 1{|v|^2}\sum_{j,k,l,m=1}^nc_{jklm}\,v_j\bar v_k \dashint_{S^{2n-1}} w_l\bar w_m\,d\sigma(w).
$$
The integral on the right hand side is easily seen to be zero unless $l=m$, in which case gives $1/n$. Thus, we obtain
$$
\begin{aligned}
\dashint_{S^{2n-1}} \HBC_\omega([v],[w])\,d\sigma(w) &=\frac 1{n|v|^2}\sum_{j,k,l=1}^nc_{jkll}\,v_j\bar v_k \\
&=-\frac{2\pi i}{n||v||^2_{\tilde h}}\operatorname{Ric}_\omega(v,\bar v).
\end{aligned}
$$
For the holomorphic sectional curvature we have instead
$$
\dashint_{S^{2n-1}} \HSC_\omega([v])\,d\sigma(v) = \sum_{j,k,l,m=1}^nc_{jklm}\dashint_{S^{2n-1}}v_j\bar v_k  v_l\bar v_m\,d\sigma(v).
$$
Once again, it is easy to see that the integrals on the right hand side must be zero unless $j=k$ and $l=m$, or $j=m$ and $k=l$. We have (see for instance \cite{Ber66} or \cite[Lemma 2.2]{Div16} for a detailed and more general computation):
$$
\dashint_{S^{2n-1}}|v_j|^2|v_k|^2\,d\sigma(v)=
\begin{cases}
\frac 2{n(n+1)} & \textrm{if $j=k$} \\
\frac 1{n(n+1)} & \textrm{otherwise},
\end{cases}
$$
so that
$$
\begin{aligned}
\sum_{j,k,l,m=1}^nc_{jklm}\dashint_{S^{2n-1}}v_j\bar v_k  v_l\bar v_m\,d\sigma(v) &=
\sum_{j,l=1}^n c_{jjll}\dashint_{S^{2n-1}}|v_j|^2|v_l|^2\,d\sigma(v) \\
&\qquad+\sum_{j,k=1}^n c_{jkkj}\dashint_{S^{2n-1}}|v_j|^2|v_k|^2\,d\sigma(v) \\
&\qquad\qquad\qquad-\sum_{j=1}^n c_{jjjj}\dashint_{S^{2n-1}}|v_j|^4\,d\sigma(v)\\
&= 2\sum_{j,l=1}^n c_{jjll}\dashint_{S^{2n-1}}|v_j|^2|v_l|^2\,d\sigma(v) \\
&\qquad-\sum_{j=1}^n c_{jjjj}\dashint_{S^{2n-1}}|v_j|^4\,d\sigma(v)\\
&=2\sum_{j\ne l} c_{jjll}\dashint_{S^{2n-1}}|v_j|^2|v_l|^2\,d\sigma(v)\\
&\qquad+\sum_{j=1}^n c_{jjjj}\dashint_{S^{2n-1}}|v_j|^4\,d\sigma(v)\\
&= \frac 2{n(n+1)}\sum_{j,l=1}^n c_{jjll} \\
&=\frac{4\pi}{n(n+1)}\,\scal_\omega(x_0),
\end{aligned}
$$
where, for the second equality, we have used the Kähler symmetry $c_{jkkj}=c_{jjkk}$.
\end{proof}

The situation can be therefore summarized as follows:
$$
\xymatrix{
 & \mathcal K_{g} \ar@{=>}[r]\ar@{==>}[ddl] & r_g \ar@{=>}[r] & s_g  \\
  & & \Ric_\omega  \ar@{<==>}[u] \ar@{=>}[dr] &  \\
\HBC_\omega \ar@{=>}[r] \ar@{=>}[urr] & \HSC_\omega \ar@{=>}[rr] \ar@{<->}[ur]_{?} & & \scal_\omega \ar@{<==>}[uu]}
$$
The arrows $\Rightarrow$ in the diagram mean that the positivity (resp. semi-positivity, negativity, semi-negativity) of the source curvature implies the positivity (resp. semi-positivity, negativity, semi-negativity) of the target curvature. These arrows are always valid, even in the non Kähler setting. On the other hand, the dashed arrows are valid in the Kähler case only. It is however \textsl{a priori} unclear if and how the sign of the holomorphic sectional curvature propagates and determines the signs of the Ricci curvature. 

So, Yau's conjecture deals exactly with this issue, at least in the case of negativity: if $(X,\omega)$ is a compact Kähler manifold such that $\HSC_\omega<0$, then there exists a (possibly different) Kähler metric $\omega'$ on $X$ such that $\Ric_{\omega'}<0$. Note that, in particular, this implies that $K_X$ is ample and therefore, by Kodaira's embedding theorem, that $X$ is a projective algebraic manifold.

There is something subtle here which is going on, that tells us that Yau's conjecture is really something which is proper to negative curvature and cannot be extended to the positively curved world. Indeed, Hitchin \cite{Hit75} proved that the Hirzebruch surfaces $\Sigma_n=\mathbb P\bigl(\mathcal O_{\mathbb P^1}\oplus\mathcal O_{\mathbb P^1}(-n)\bigr)$, $n\in\mathbb N$, can always be endowed with a metric (of Hodge type) of positive holomorphic sectional curvature. But it is well known that the anticanonical bundle $-K_{\Sigma_n}$ is not ample if $n>1$. In particular, if $n>1$, $\Sigma_n$ cannot carry any Hermitian metric whose Chern--Ricci curvature is positive (we thank V. Tosatti for having brought back to our mind Hitchin's example).

\begin{remark}
It is somehow embarrassing, but we don't dispose ---at our best knowledge--- any example of a compact Kähler manifold $(X,\omega)$ such that $\HSC_\omega$ is negative but $\HBC_\omega$ or $\Ric_\omega$ do not have a sign. It would be of course highly desirable to have such an example, if any. 

It was anyway brought to our attention by Y. Yuan that one can at least construct a complete example: the holomorphic sectional curvature of the Bergman metric on the symmetrized bidisc is negatively pinched while the holomorphic bisectional curvature is positive somewhere \cite[Theorem 1]{CY20}.

\end{remark}

\section{Motivations from birational geometry}

We take the opportunity here to reproduce a quite standard argument in order to see how to reduce the Kobayashi conjecture on the ampleness of the canonical bundle of compact projective hyperbolic manifolds to showing that projective manifolds $X$ with trivial first real Chern class are not hyperbolic. 
The same strategy can also be applied to have a proof using techniques stemming from birational geometry of the Yau conjecture. All this, provided the abundance conjecture is true. Indeed, a slightly more general statement can be obtained and also the same kind of arguments can be applied to compact Kähler manifolds, as we shall see.

Let us begin by recalling what Abundance Conjecture predicts.

\begin{conjecture}[Abundance conjecture]
Let $X$ be a projective variety with at most Kawamata log-terminal singularities and with nef canonical bundle. Then, its canonical bundle is semi-ample, \textsl{i.e.} a large tensor power $K_X^{\otimes m}$ is generated by its global sections.
\end{conjecture}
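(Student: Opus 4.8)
The Abundance Conjecture is one of the principal open problems of the Minimal Model Program, so what follows is a description of the standard line of attack rather than a complete argument; it proceeds by reducing the statement to a non-vanishing assertion and to lower-dimensional pairs.

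The first step would be to stratify according to the numerical dimension $\nu = \nu(K_X) \in \{0,1,\dots,n\}$, where $n = \dim X$. In the extreme case $\nu = n$ the divisor $K_X$ is nef and big, and the base-point-free theorem of Kawamata and Shokurov applies directly (since $(a-1)K_X$ is nef and big for $a > 1$), giving that $K_X$ is semi-ample; this is the only case settled unconditionally in all dimensions. In the other extreme case $\nu = 0$ one would invoke Nakayama's theorem that a nef canonical bundle of numerical dimension zero on a klt variety is torsion, so that $K_X^{\otimes m} \cong \mathcal O_X$ for some $m$ and semi-ampleness is automatic; the substance there is hidden in the non-vanishing input, which proceeds through the Albanese morphism and Simpson-type results on local systems. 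Thus the whole difficulty is concentrated in the intermediate range $0 < \nu < n$.

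For $0 < \nu < n$ the plan is an induction on dimension carried by the Iitaka fibration. One first needs the non-vanishing statement, $K_X$ nef $\Rightarrow \kappa(X) \geq 0$; granting it, pass to a log resolution and form the Iitaka fibration $f\colon X' \to Z$ of $K_{X'}$, so that $\dim Z = \kappa(X) \leq \nu < n$ and the restriction of $K_{X'}$ to a very general fibre $F$ is nef of Kodaira dimension zero, hence torsion by the $\nu = 0$ case. Running the MMP fibrewise and applying the Fujino--Mori canonical bundle formula yields $K_{X'} \sim_{\mathbb Q} f^*(K_Z + B_Z + M_Z)$ with $(Z,B_Z)$ klt and $M_Z$ the moduli divisor, which is nef by Kawamata's semi-positivity theorem. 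One is then reduced to the semi-ampleness of $K_Z + B_Z + M_Z$ on a variety of strictly smaller dimension, an \emph{abundance statement for generalized polarized pairs}, which closes the induction.

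The main obstacle, and precisely the point at which the conjecture remains open in general, is twofold: proving non-vanishing $\kappa(X) \geq 0$ for an arbitrary nef $K_X$ (known for $\dim X \leq 3$ via Miyaoka's generic semi-positivity argument, and in $\dim X = 4$ only modulo further hypotheses), and upgrading the nefness of the moduli part $M_Z$ in the canonical bundle formula to its semi-ampleness, so that the inductive step genuinely produces a morphism rather than merely a nef class. In dimension $2$ the statement is contained in the Enriques--Kodaira classification, in dimension $3$ it is the theorem of Miyaoka and Kawamata, and beyond that only partial results (for instance for varieties of maximal Albanese dimension, or under additional positivity of $\Omega^1_X$) are available.
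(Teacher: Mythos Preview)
The statement you are asked to address is explicitly labelled a \emph{conjecture} in the paper, and the paper makes no attempt to prove it. Immediately after stating it, the author writes ``Suppose now, and for the rest of the section, that the abundance conjecture holds true (in particular all we are saying hold in dimension at most three, unconditionally)'' and proceeds to \emph{use} it as a standing hypothesis in order to motivate, via the semiample Iitaka fibration, why manifolds with trivial real first Chern class are the key obstruction to Kobayashi's conjecture. There is therefore no ``paper's own proof'' to compare against.

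Your write-up is an honest and accurate outline of the standard strategy (stratify by numerical dimension, base-point-free theorem in the big case, torsion in the $\nu=0$ case, canonical bundle formula and induction in the intermediate range), and you correctly flag the two genuine obstructions: non-vanishing and semi-ampleness of the moduli part. As a survey of the state of the art this is fine, but it is not a proof and cannot be one, since the conjecture is open in dimension $\ge 4$. In the context of this paper no proof is expected; the statement functions purely as a black-box assumption.
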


This is known in dimension at most three. Observe that a smooth projective manifold has no singularities at all, so that the conclusion of this conjecture should in particular hold for such manifolds. 

\begin{remark}
In the special case of zero Kodaira dimension this implies the following. Suppose $X$ is a projective manifold, with nef canonical bundle $K_X$ and $\kappa(X)=0$. On one side, if abundance is true, we now that some power of $K_X$ must be globally generated. On the other side, $\kappa(X)=0$ means that, for every positive tensor power of $K_X$, the space of its global holomorphic sections is either $0$ or $1$, but not all can have dimension $0$. Therefore, the globally generated tensor powers have, up to multiples, only one section, and thus this section can never vanish. We deduce that such powers are holomorphically trivial. In particular $K_X$ is torsion, and $c_1(X)=-c_1(K_X)$ must be zero in rational or real cohomology. 
\end{remark}

So, let $X$ be a smooth Kobayashi hyperbolic projective manifold. By the celebrated criterion of Mori, $K_X$ is nef -- otherwise $X$ would contain a rational curve. Thus, $K_X$ already lies in the closure of the ample cone. Suppose now, and for the rest of the section, that the abundance conjecture holds true (in particular all we are saying hold in dimension at most three, unconditionally). Thus, we have that $K_X$ is semi-ample. We are therefore able to use the following for the canonical bundle.

\begin{theorem}[Semiample Iitaka fibrations {\cite[Theorem 2.1.27]{Laz04}}]
Let $X$ be a normal projective variety and let $L\to X$ be a semi-ample line bundle on $X$. Then, there is an algebraic fiber space (\textsl{i.e.} a projective surjective mapping with connected fibers)
$$
\phi\colon X\to Y,
$$
with $\dim Y=\kappa(L)$, having the property that, for all sufficiently big and divisible integers $m$, it coincides with the map associated to the complete linear system $|L^{\otimes m}|$. Furthermore, there is an integer $f$ and an ample line bundle $A$ on $Y$ such that $L^{\otimes f}\simeq\phi^*A$.
\end{theorem}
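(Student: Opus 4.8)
The plan is to build the fibration directly out of the free linear systems $|L^{\otimes m}|$ and to show that, along the divisibility order on $m$, these systems stabilise. By semi-ampleness fix $p_0>0$ with $L^{\otimes p_0}$ globally generated; then $L^{\otimes p}$ is globally generated for every multiple $p$ of $p_0$, so the complete linear system $|L^{\otimes p}|$ defines a morphism $\psi_p\colon X\to\mathbb P^{N_p}$ with $\psi_p^*\mathcal O(1)=L^{\otimes p}$. Let $Y_p:=\psi_p(X)$ with its reduced structure and let $X\xrightarrow{\ \phi_p\ }\bar Y_p\xrightarrow{\ \nu_p\ }Y_p$ be the Stein factorisation, so that $\phi_p$ is surjective with connected fibres, $(\phi_p)_*\mathcal O_X=\mathcal O_{\bar Y_p}$, $\nu_p$ is finite, and $\bar Y_p$ is normal (automatic since $X$ is). Writing $A_p:=\nu_p^*\mathcal O_{Y_p}(1)$, one has $L^{\otimes p}=\phi_p^*A_p$ with $A_p$ ample on $\bar Y_p$.

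Next I would compare the $\phi_p$ for varying $p$. If $p\mid q$ (both multiples of $p_0$), products of sections of $L^{\otimes p}$ generate $L^{\otimes q}$, so the subsystem $\operatorname{Sym}^{q/p}H^0(X,L^{\otimes p})\to H^0(X,L^{\otimes q})$ is base-point free and the morphism it defines equals $g\circ\psi_p$ for a finite morphism $g$ onto its image (it pulls $\mathcal O(1)$ back to the ample bundle $\mathcal O_{Y_p}(q/p)$). Since a fibre of $\phi_q$ is connected and is contained in a fibre of $\psi_q$, hence in $\psi_p^{-1}$ of a finite set, the morphism $\phi_p$ contracts every fibre of $\phi_q$, and therefore factors through the Stein factorisation of $\psi_q$: $\phi_p=\pi_{q,p}\circ\phi_q$ for a surjective morphism $\pi_{q,p}\colon\bar Y_q\to\bar Y_p$. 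Because $\dim\bar Y_p\le\dim X$ and the dimensions are non-decreasing along divisibility, the maximum is attained at some $p^\ast$; set $Y:=\bar Y_{p^\ast}$, $\phi:=\phi_{p^\ast}$, $A:=A_{p^\ast}$, $f:=p^\ast$. For any multiple $q$ of $p^\ast$, the morphism $\pi:=\pi_{q,p^\ast}\colon\bar Y_q\to Y$ is surjective between varieties of equal dimension; comparing pullbacks, $\phi_q^*A_q=L^{\otimes q}=(\phi^*A)^{\otimes q/p^\ast}=\phi_q^*(\pi^*A^{\otimes q/p^\ast})$, whence $A_q=\pi^*A^{\otimes q/p^\ast}$, so $\pi$ pulls an ample bundle back to an ample bundle and is thus finite; its fibres being finite and connected, it is birational, and a finite birational morphism onto the normal variety $Y$ is an isomorphism by Zariski's main theorem. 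Hence $\phi_q$ coincides with $\phi$ (up to the canonical identification of targets, equivalently with its Stein factorisation) for all sufficiently divisible $q$, which is the asserted stabilisation.

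It remains to identify the invariants of $\phi\colon X\to Y$. We already have $L^{\otimes f}=\phi^*A$ with $A$ ample on $Y$, which is the final assertion. Since $\phi_*\mathcal O_X=\mathcal O_Y$, the projection formula gives $H^0(X,L^{\otimes kf})=H^0(X,\phi^*A^{\otimes k})=H^0(Y,A^{\otimes k})$ for all $k\ge 0$, so $\kappa(L)=\kappa(L^{\otimes f})=\kappa(A)=\dim Y$ because $A$ is ample; this yields $\dim Y=\kappa(L)$ and completes the proof.

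The step I expect to be the main obstacle is the stabilisation in the second paragraph: one must verify carefully that the comparison maps $\pi_{q,p}$ are genuine morphisms and not merely rational maps — this is exactly where the universal property of the Stein factorisation and the base-point freeness of the intermediate subsystems are used — that connectedness of fibres is inherited by $\pi_{q,p}$, and, crucially, that the limiting $\pi$ is finite via the ampleness comparison $A_q=\pi^*A^{\otimes q/p^\ast}$, so that Zariski's main theorem applies on the normal variety $Y$. It is precisely these normality and connectedness subtleties, together with the need for $L^{\otimes m}$ to be globally generated, that are responsible for the hypothesis ``sufficiently big and divisible $m$'' rather than merely ``$m\gg 0$''.
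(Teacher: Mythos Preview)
The paper does not give its own proof of this theorem: it is quoted from Lazarsfeld \cite[Theorem 2.1.27]{Laz04} and used as a black box, so there is nothing to compare against in the paper itself.

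Your argument is essentially the standard one (and is, up to presentation, the proof in Lazarsfeld). It is correct in outline; let me flag two places where the write-up could be tightened. First, the sentence ``hence in $\psi_p^{-1}$ of a finite set'' is an unnecessary detour: since the Veronese is an embedding, the fibres of the subsystem map coincide \emph{set-theoretically} with those of $\psi_p$, so fibres of $\psi_q$ sit inside fibres of $\psi_p$ directly, and connectedness places a fibre of $\phi_q$ inside a single fibre of $\phi_p$. Second, for the conclusion that $\pi=\pi_{q,p^\ast}$ is an isomorphism you do not need to pass through ``finite with connected fibres $\Rightarrow$ birational'' at all: from $\phi=\pi\circ\phi_q$, $(\phi_q)_*\mathcal O_X=\mathcal O_{\bar Y_q}$ and $\phi_*\mathcal O_X=\mathcal O_Y$ you get $\pi_*\mathcal O_{\bar Y_q}=\mathcal O_Y$, and a finite morphism with this property is an isomorphism. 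This bypasses any characteristic or reducedness worries and is cleaner than invoking Zariski's Main Theorem.
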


Now, since $K_X$ is semi-ample, we get an algebraic fiber space on $X$
$$
\phi\colon X\to Y,
$$
with $\dim Y=\kappa(X)$, and such that some power, say $K_X^{\otimes f}$, of the canonical bundle is the pull-back of an ample divisor $A$ on $Y$. In particular, for every general (hence smooth: recall that we are working with a smooth $X$) fiber $F$ of $\phi$, we have by taking the determinant of the short exact sequence
$$
0\to T_F\to T_X|_F\to\mathcal O_F^{\oplus\kappa(X)}\to 0,
$$
that $K_F\simeq K_X|_F$ (this is because obviously the normal bundle of the fiber is trivial). But then, $K_F^{\otimes f}\simeq K_X^{\otimes f}|_F\simeq\phi^*A|_F\simeq\mathcal O_F$. Thus, $K_F$ is torsion, and the general fiber has Kodaira dimension zero and trivial first Chern class in real cohomology.

\begin{conjecture}
A projective manifold with trivial real first Chern class is not Kobayashi hyperbolic.
\end{conjecture}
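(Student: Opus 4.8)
The statement is, at present, a conjecture, and the plan below is a reduction strategy rather than a complete proof: it brings the problem down to the case of Calabi--Yau and irreducible holomorphic symplectic manifolds, where the non-existence of entire curves is itself open.

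First I would invoke Yau's solution of the Calabi conjecture: a projective (more generally, compact K\"ahler) manifold $X$ with $c_1(X)=0$ in $H^2(X,\mathbb R)$ carries a Ricci-flat K\"ahler metric. By the Beauville--Bogomolov decomposition theorem there is then a finite \'etale cover $\pi\colon\tilde X\to X$ with
$$
\tilde X\simeq T\times\prod_i Y_i\times\prod_j Z_j,
$$
where $T$ is a complex torus, the $Y_i$ are strict Calabi--Yau manifolds, and the $Z_j$ are irreducible holomorphic symplectic manifolds. Next I would use two elementary hereditary properties of Kobayashi hyperbolicity. For a finite \'etale cover, $X$ is hyperbolic if and only if $\tilde X$ is: any entire curve $\mathbb C\to X$ lifts, since $\mathbb C$ is simply connected, to an entire curve in $\tilde X$, and conversely $\pi$ sends entire curves to entire curves (a constant projection would force the curve into a finite fiber); as $X$ and $\tilde X$ are compact, Theorem \ref{Brody} turns the absence of entire curves into hyperbolicity. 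Moreover a product $A\times B$ is hyperbolic if and only if both factors are, so it suffices to exhibit an entire curve in one of the factors $T$, $Y_i$, $Z_j$.

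If the torus factor is positive-dimensional we are done: for $v\in\mathbb C^{\dim T}\setminus\{0\}$ the map $t\mapsto tv$ descends to a non-constant entire curve in $T$, hence in $\tilde X$, hence in $X$, contradicting Proposition \ref{distdecr}. The \textbf{main obstacle} is the remaining case, in which $\tilde X$ is a product of Calabi--Yau and irreducible holomorphic symplectic manifolds: one would have to produce an entire curve --- or at least a positive-dimensional family of rational or elliptic curves --- in each such factor. For surfaces this can be settled through the Enriques--Kodaira classification (abelian and bielliptic surfaces have torus covers; Enriques surfaces carry elliptic fibrations; a projective K3 surface with an elliptic fibration, or with infinite automorphism group, contains rational or elliptic curves), but for a very general projective K3 surface --- and a fortiori for general higher-dimensional Calabi--Yau and hyperk\"ahler manifolds --- the existence of entire curves is precisely what is not known, which is why the statement remains a conjecture.
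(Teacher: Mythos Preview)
Your reduction strategy is essentially the one the paper itself sketches: invoke the Beauville--Bogomolov decomposition, use that hyperbolicity is preserved under finite \'etale covers and products, dispose of the torus factor trivially, and isolate the Calabi--Yau and irreducible holomorphic symplectic factors as the genuine obstacles. So at the structural level you and the paper agree.

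Two points where the paper goes further or where your account should be corrected. First, the paper records a substantial piece of progress you omit: Verbitsky has shown that irreducible holomorphic symplectic manifolds with $b_2>3$ are never Kobayashi hyperbolic, and since conjecturally every IHS manifold satisfies $b_2>3$, this effectively pushes the entire difficulty onto the Calabi--Yau factors. Second, your claim that ``for a very general projective K3 surface the existence of entire curves is precisely what is not known'' is not accurate: every projective K3 surface contains a rational curve (Bogomolov--Mumford, Mori--Mukai), hence is not hyperbolic. The genuinely open case is the strict Calabi--Yau case in dimension $\ge 3$, where indeed one expects rational curves but this is not known in general; the paper emphasizes exactly this as the remaining challenge.
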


Suppose to be able to prove the above conjecture. We claim that this implies that the Kodaira dimension of a projective Kobayashi hyperbolic manifold $X$ must be maximal, that is, $X$ is of general type. Indeed, if $1\le\kappa(X)<\dim X$, then $\phi$ has positive dimensional fibers and the general ones have trivial real first Chern class, as observed above. So we would, by our assumptions, find a non-Kobayashi hyperbolic positive dimensional subvariety of $X$, contradiction.

Now, if $K_X$ is big and there are no rational curves on $X$ it is not difficult to show that $K_X$ is ample (cf. Lemma \ref{lem:ratcurv}), and we are done.

Next, how to prove that a projective manifold $X$ with trivial real first Chern class is not hyperbolic? By the Beauville--Bogomolov decomposition theorem \cite{Bea83}, a compact Kähler manifold with vanishing real first Chern class is, up to finite étale covers, a product of complex tori, Calabi--Yau manifolds and irreducible holomorphic symplectic manifolds. Since complex tori are obviously not Kobayashi hyperbolic, one is reduced to showing that Calabi--Yau manifolds and irreducible holomorphic symplectic manifolds are not Kobayashi hyperbolic (since Kobayashi hyperbolicity is preserved under étale covers). Very recently, in the spectacular paper \cite{Ver15}, Verbitsky has shown ---among other things--- that irreducible holomorphic symplectic manifolds with second Betti number greater than three (a condition that should indeed conjecturally hold for every irreducible holomorphic symplectic manifold) are not Kobayashi hyperbolic. 

Thus, one of the main challenges left is to show non hyperbolicity of Calabi--Yau manifolds. For such manifolds, much more is expected to be true: they should always contain rational curves! For several results in this direction, at least for Calabi--Yau manifolds with large Picard number, we refer the reader to \cite{Wil89,Pet91,HBW92,Ogu93,DF14,DFM16}, to cite only a few.

Coming back to curvature, of course possessing a Kähler metric whose holomorphic sectional curvature is negative implies Kobayashi hyperbolicity and thus having ample canonical bundle by the above discussion, provided the abundance conjecture is true. Therefore, this settles Yau's conjecture under the assumptions that abundance conjecture is true.

Now, what about compact Kähler manifolds with merely non positive holomorphic sectional curvature? Surely, they do not contain any rational curve (cf. Theorem \ref{algcrit}). Thus, if $X$ is projective, we conclude that $K_X$ is nef as before by Mori. If $X$ is merely Kähler, one needs to work more but the same conclusion of nefness for $K_X$ holds true, thanks to a very recent result by Tosatti and Yang \cite{TY15} (which is a slight modification of the original Wu and Yau method \cite{WY16}). Anyway, such a condition is not strong enough in order to obtain positivity of the canonical bundle, as flat complex tori immediately show. A less obvious but still easy counterexample is given by the product (with the product metric) of a flat torus and, say, a compact Riemann surface of genus greater than or equal to two endowed with its Poincaré metric. In this example, over each point there are some directions along which the holomorphic sectional curvature is strictly negative but always some flat directions, too (we refer the reader to the recent paper \cite{HLW14} for some nice results about this merely non positive case). 
So, if we look for the weakest condition, as long as the sign of holomorphic sectional curvature is concerned, for which one can hope to obtain the positivity of the canonical bundle, we are led to give the following (standard, indeed) definition.

\begin{definition}
The holomorphic sectional curvature is said to be \emph{quasi-negative} if $\HSC_\omega\le 0$ and moreover there exists at least one point $x\in X$ such that $\HSC_\omega(x,[v])< 0$ for every $v\in T_{X,x}\setminus\{0\}$.
\end{definition}

\begin{remark}
A slightly subtler condition on holomorphic sectional curvature that should also work for this kind of purposes, based on the notion of \lq\lq truly flat\rq\rq{} directions, was very recently introduced by Heier, Lu, Wong, and Zheng in \cite{HLWZ17}. We refer directly to that paper for more on this.
\end{remark}

Now, why should we hope that such a condition would be sufficient? The reason comes again from the birational geometry of complex Kähler manifolds, and in particular again from the abundance conjecture. Let us illustrate why.

We begin with the following elementary observation.

\begin{proposition}\label{prop:average}
Let $(X,\omega)$ be a compact Kähler manifold with $\HSC_\omega\le 0$, and suppose there exists a direction $[v]\in P(T_{X,x_0})$ such that $\HSC_\omega(x_0,[v])<0$, for some $x_0\in X$. Then, $c_1(X)\in H^2(X,\mathbb R)$ cannot be zero.
\end{proposition}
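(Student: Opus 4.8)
The plan is to combine the averaging formula of Proposition \ref{average} with the cohomological interpretation of the total scalar curvature recorded in Remark \ref{totscalcurv}, and then argue by contradiction.

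First I would observe that by Proposition \ref{average}, for every $x\in X$ one has
$$
\frac{4\pi}{n(n+1)}\scal_\omega(x)=\dashint_{S^{2n-1}}\HSC_\omega(x,[v])\,d\sigma(v).
$$
Since $\HSC_\omega\le 0$ everywhere by hypothesis, this immediately gives $\scal_\omega\le 0$ on all of $X$. Next I would upgrade this to strict negativity at the distinguished point: because $\HSC_\omega(x_0,\cdot)$ is continuous on the (compact) projectivized tangent space $P(T_{X,x_0})$ and is strictly negative at $[v]$, it is strictly negative on an open neighborhood of $[v]$, hence on a set of positive $d\sigma$-measure on $S^{2n-1}$, while being $\le 0$ elsewhere; therefore the average is strictly negative, i.e. $\scal_\omega(x_0)<0$. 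By continuity of $\scal_\omega$ we conclude $\int_X\scal_\omega\,\tfrac{\omega^n}{n!}<0$.

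On the other hand, Remark \ref{totscalcurv} (together with $\scal_\omega=\tfrac1{4\pi}s_g$) gives
$$
\int_X\scal_\omega\,\frac{\omega^n}{n!}=\frac{1}{(n-1)!}\,c_1(X)\cdot[\omega]^{n-1}.
$$
If we assume, for contradiction, that $c_1(X)=0$ in $H^2(X,\mathbb R)$, then the right-hand side vanishes, forcing $\int_X\scal_\omega\,\tfrac{\omega^n}{n!}=0$, which contradicts the strict inequality obtained above. Hence $c_1(X)\neq 0$ in $H^2(X,\mathbb R)$, as claimed.

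There is no serious obstacle here: the argument is essentially a packaging of two already-established facts. The only point requiring a touch of care is the passage from ``$\HSC_\omega$ is strictly negative in one direction at $x_0$'' to ``$\scal_\omega(x_0)<0$'', which I would handle exactly by the continuity/positive-measure remark above; everything else is formal.
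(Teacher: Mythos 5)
Your proof is correct and follows exactly the same route as the paper: averaging via Proposition \ref{average} to get $\scal_\omega\le 0$ everywhere and $\scal_\omega(x_0)<0$, then invoking the cohomological identity of Remark \ref{totscalcurv}. The only difference is that you spell out the continuity/positive-measure step at $x_0$, which the paper leaves implicit.
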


\begin{proof}
By Proposition \ref{average}, we know that $\scal_\omega$ is everywhere non positive, and moreover, as an average, it is strictly negative at $x_0$. In particular, the total scalar curvature of $\omega$ is strictly negative. The conclusion follows from Remark \ref{totscalcurv}.
\end{proof}

As a direct consequence, if $X$ is moreover projective and $\operatorname{Pic}(X)$ is infinite cyclic, then $K_X$ must be ample. This way one can recover (and slightly generalize) a result of \cite{WWY12}.

Now, let $(X,\omega)$ be a compact Kähler manifold with quasi-negative holomorphic sectional curvature. Then, Proposition \ref{prop:average} implies that $X$ cannot have trivial first real Chern class. Moreover, since being quasi-negative is stronger than being non positive, we saw that, thanks to \cite{TY15}, $K_X$ is nef.

Once again, suppose that the abundance conjecture holds true, but now also for compact Kähler manifolds. Then, $K_X$ is semi-ample and we can consider exactly as before the semi-ample Iitaka fibration for $K_X$.
Since $X$ has non trivial first real Chern class, we must have that $\kappa(X)>0$, otherwise some power of the canonical bundle would be a pull-back of a (ample) line bundle over point, and thus would be trivial!

If $\kappa(X)=\dim X$, then $X$ would be birational to a projective variety, \textsl{i.e.} would be a Moishezon manifold. By Moishezon's theorem, a compact Kähler Moishezon manifold is projective. Moreover, $X$ contains no rational curves and is of general type, and we conclude as before that $K_X$ must be ample.

Next, suppose by contradiction that $1\le\kappa(X)\le \dim X-1$ so that if we call $F$ the general fiber of $\phi$, we have that $F$ is a smooth compact Kähler manifold of positive dimension and different from $X$ itself. 
Now, on the one hand, the short exact sequence of the fibration shows that
$K_F\simeq K_X|_F$ and therefore it follows that $c_1(F)$ must be zero in real cohomology. On the other hand, the classical Griffiths' formulae for curvature of holomorphic vector bundles imply that the holomorphic sectional curvature decreases when passing to submanifolds, that is for every $x\in F\subset X$
$$
\HSC_{\omega|_F}(x,[v])\le\HSC_{\omega}(x,[v]),
$$
where $v\in T_{F,x}$ and, in the right hand side, $v$ is seen as a tangent vector to $X$.

The quasi-negativity of the holomorphic sectional curvature implies, since $F$ is a general fiber, that there exists a tangent vector to $F$ along which the holomorphic sectional curvature of $\omega|_F$ is strictly negative. Thus, Proposition \ref{prop:average} implies that $F$ cannot have trivial first real Chern class, which is absurd.

As a consequence, me may indeed hope to extend Wu--Yau--Tosatti--Yang theorem to the optimal, quasi-negative case. This is precisely the main contribution of the paper \cite{DT16}.

\begin{theorem}[{\cite[Theorem 1.2]{DT16}}]\label{thm:main}
Let $(X,\omega)$ be a connected compact Kähler manifold. Suppose that the holomorphic sectional curvature of $\omega$ is quasi-negative. Then, $K_X$ is ample. In particular, $X$ is projective.
\end{theorem}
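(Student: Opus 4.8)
The plan is to run the Wu--Yau argument \cite{WY16} in its K\"ahler formulation (Tosatti--Yang \cite{TY15}), i.e. a family of complex Monge--Amp\`ere equations degenerating onto $c_1(K_X)$, but forcing the a priori estimates to survive the fact that here $\HSC_\omega$ is only nonpositive and is strictly negative (in every direction) at a single point $x_0$. The role of that point is to keep the estimates from collapsing, exactly where uniform negativity was used before.

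\emph{Set-up.} Since $\HSC_\omega\le 0$, by Theorem~\ref{algcrit} the manifold $X$ carries no rational curve, and by the Tosatti--Yang refinement \cite{TY15} the bundle $K_X$ is nef; hence for every $t>0$ the class $\alpha_t:=t[\omega]+c_1(K_X)$ is K\"ahler (K\"ahler plus nef). For each such $t$ I would consider the twisted equation
$$
\Ric(\omega_t)=-\omega_t+t\,\omega ,\qquad \omega_t=\theta_t+i\partial\bar\partial\varphi_t\in\alpha_t ,
$$
which is equivalent to $(\theta_t+i\partial\bar\partial\varphi_t)^n=e^{\varphi_t+F_t}\,\omega^n$ for a fixed $F_t$; since the right-hand side is increasing in $\varphi_t$, Yau's theorem gives a unique solution for each fixed $t>0$. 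The whole content of the theorem then lies in bounding $\omega_t$ and $\varphi_t$ \emph{uniformly in $t$}, so that one may let $t\to 0^+$ and recover positivity of the limit class $c_1(K_X)$.

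\emph{A priori estimates --- the crux.} What is needed is a uniform $C^0$ bound for $\varphi_t$ (delicate, because $\alpha_t$ collapses onto the possibly non-K\"ahler nef class $c_1(K_X)$) and, above all, a uniform bound $\operatorname{tr}_\omega\omega_t\le C$. Applying the maximum principle to $\log\operatorname{tr}_\omega\omega_t-A\varphi_t$ with $A\gg 0$ and using the K\"ahler identities together with the equation, every term can be given a good sign except one curvature term of $\omega$, which by Royden's lemma is bounded above by $-\kappa\,(\operatorname{tr}_\omega\omega_t)^2$ as soon as $\HSC_\omega\le-\kappa<0$; in Wu--Yau this negative quadratic term dominates the bad terms. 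Under mere quasi-negativity this term is only $\le 0$ and no longer suffices: \textbf{this is the main obstacle.} To get around it I expect one has to run the maximum principle on a suitably modified test function and to exploit the strict negativity of $\HSC_\omega$ at $x_0$ --- together with the compactness and connectedness of $X$ --- as a replacement for the uniform curvature bound available to Wu and Yau; the price is a substantially more intricate and less self-contained estimate.

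\emph{Conclusion.} Granting the uniform estimates, Yau's higher-order regularity lets one pass to the limit as $t\to 0^+$: in the favourable case one gets a genuine K\"ahler metric $\omega_0\in c_1(K_X)$ with $\Ric(\omega_0)=-\omega_0<0$, so $c_1(K_X)$ is a K\"ahler class and $K_X$ is ample by Kodaira's embedding theorem. Otherwise one obtains a closed positive current in $c_1(K_X)$ together with $\liminf_{t\to 0}\int_X\omega_t^n>0$ (that $K_X$ cannot be numerically trivial is in any case forced by the strictly negative total scalar curvature of $\omega$, cf. Proposition~\ref{prop:average} and Remark~\ref{totscalcurv}), whence $K_X$ is nef and big; since $X$ has no rational curve, the base-point-free theorem upgrades this to ampleness of $K_X$ (cf. Lemma~\ref{lem:ratcurv}). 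In every case $X$ is then projective, by Kodaira's embedding theorem or by Moishezon's theorem.
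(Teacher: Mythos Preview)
Your outline correctly identifies both the overall strategy (Wu--Yau Monge--Amp\`ere continuity method, starting from nefness of $K_X$ via \cite{TY15}) and the precise spot where quasi-negativity creates a difficulty: Royden's term only gives a nonpositive contribution, so the pointwise maximum principle no longer closes. But at exactly this point your argument stops being a proof: \emph{``I expect one has to run the maximum principle on a suitably modified test function\dots''} is a hope, not a step. You do not say what the test function is, nor how the single good point $x_0$ is made to influence an inequality evaluated at an unknown maximum point elsewhere on $X$. This is the whole content of the theorem, and it is missing.

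The paper's resolution is \emph{not} via a modified pointwise maximum principle, and it does not aim at uniform $C^2$ estimates or a smooth limit metric. Instead one only needs $c_1(K_X)^n=\lim_{\varepsilon\to 0}\int_X\omega_\varepsilon^n>0$, and the mechanism is integral and pluripotential. One replaces the constant $\kappa$ by the continuous function $\kappa(x)=-\max_{[v]}\HSC_\omega(x,[v])\ge 0$ (strictly positive at $x_0$), so that the Laplacian inequality for $T_\varepsilon=\log\operatorname{tr}_{\omega_\varepsilon}\omega$ reads
\[
\Delta_{\omega_\varepsilon}T_\varepsilon\ \ge\ M(x)\,e^{T_\varepsilon}-1,\qquad M=\tfrac{n+1}{2n}\kappa\ge 0.
\]
Integrating against $\omega_\varepsilon^n$ kills the Laplacian and gives $\int_X M\,e^{T_\varepsilon}e^{u_\varepsilon}\omega^n\le\int_X e^{u_\varepsilon}\omega^n$. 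Combining with the elementary bound $e^{T_\varepsilon}>e^{-u_\varepsilon/n}$ and normalising $v_\varepsilon=u_\varepsilon-\sup_X u_\varepsilon$, one obtains
\[
\bigl(\inf_X e^{-u_\varepsilon/n}\bigr)\int_X M\,e^{v_\varepsilon}\omega^n\ \le\ \int_X e^{v_\varepsilon}\omega^n.
\]
If $\sup_X u_\varepsilon\to -\infty$ (equivalently the left factor $\to+\infty$), one uses that the $v_\varepsilon$ are $\omega'$-psh with $\sup v_\varepsilon=0$, hence relatively compact in $L^1$; a subsequence has $e^{v_{\varepsilon_k}}\to e^v$ a.e.\ and in $L^1$ with $v$ quasi-psh, so both integrals converge to finite positive numbers (the left one is positive precisely because $M(x_0)>0$ and $\{v=-\infty\}$ is null). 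This contradicts the displayed inequality, so $\sup_X u_\varepsilon$ stays bounded below, and $c_1(K_X)^n>0$. From there, nef and big plus absence of rational curves give ampleness as you say.

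Two smaller points. First, the relevant trace is $S_\varepsilon=\operatorname{tr}_{\omega_\varepsilon}\omega$, not $\operatorname{tr}_\omega\omega_\varepsilon$; the Royden/Laplacian computation lives on that side. Second, no uniform $C^0$ bound on $\varphi_t$ is needed for this argument---only the one-sided control of $\sup_X u_\varepsilon$---so the delicacy you flag about the collapsing class does not actually arise.
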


We shall spend some words on this result as well as the Kähler case in the last section. 

\begin{remark}
To finish this section with, unfortunately, we must confess that we are not aware of any example of a compact Kähler manifold with a Kähler metric whose holomorphic sectional curvature is quasi-negative but which does not posses any Kähler metric with strictly negative holomorphic sectional curvature. In other word, is Theorem \ref{thm:main} a true generalization of Wu--Yau--Tosatti--Yang result? We believe so. Then, such an example, if any, would be urgently needed!
\end{remark}
\section{An example by J.-P. Demailly}

In this section we would like to explain, following \cite[\S8]{Dem97}, how one can construct examples of compact hyperbolic projective manifolds which nevertheless do not admit any hermitian metric of negative holomorphic sectional curvature. Such examples can be generalized to higher order analogues ---namely \lq\lq$k$-jet curvature\rq\rq{}--- of holomorphic sectional curvature: this will be mentioned at the end of the section, and related conjectures that come out from this picture will be discussed at the end of the chapter.

The first observation is the following algebraic criterium for the nonexistence of a metric with negative holomorphic sectional curvature. Let $X$ be a complex manifold, $C$ be a compact Riemann surface, and $F\colon C\to X$ be a non-constant holomorphic map. Let $m_p\in\mathbb N$ be the multiplicity at $p\in C$ of $F$. Clearly, $m_p=1$ except possibly at finitely many points of $C$, and $m_p\ge 2$ if and only if $F$ is not an immersion at $p$. 

\begin{theorem}[Demailly {\cite[Special case of Theorem 8.1]{Dem97}}]\label{algcrit}
Consider \newline$(X,\omega)$ a compact hermitian manifold and let $F\colon C\to X$ be a non constant holomorphic map from a compact Riemann surface $C$ of genus $g=g(C)$ to $X$. Suppose that $\operatorname{HSC}_\omega\le-\kappa$ for some $\kappa\ge 0$. Then,
$$
2g-2\ge\frac{\kappa}{2\pi}\deg_\omega C+\sum_{p\in C}(m_p-1),
$$ 
where $\deg_\omega C=\int_C F^*\omega>0$ is the degree of $C$ with respect to $\omega$.
\end{theorem}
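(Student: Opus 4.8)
The plan is to build, out of the velocity field of $F$, a Hermitian line bundle on $C$ whose curvature is bounded above by $-\kappa$ times the pulled‑back form, and then to compare its degree computed via Chern--Weil with its degree read off from the zeros of $dF$.

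\textbf{Step 1 (the velocity line bundle).} The differential $dF\colon T_C\to F^*T_X$ is a non-zero morphism of holomorphic vector bundles on $C$; since $F$ is non-constant it is injective as a sheaf map, and the saturation of its image inside $F^*T_X$ is a holomorphic line \emph{subbundle} $L\subseteq F^*T_X$ (saturations of line subsheaves of a vector bundle over a curve are subbundles). Endow $L$ with the Hermitian metric restricted from $F^*\omega$. It is worth stressing that $F^*\omega$ is a genuine smooth positive-definite Hermitian metric on the \emph{bundle} $F^*T_X$ over all of $C$ --- only the pulled-back $2$-form on $T_C$ degenerates, and it does so exactly at the ramification points --- so $(L,F^*\omega|_L)$ is a smooth Hermitian line bundle on $C$ and $\Theta(L,F^*\omega|_L)$ is a smooth $(1,1)$-form.

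\textbf{Step 2 (degree of $L$, algebraically).} The induced map $dF\colon T_C\to L$ is a non-zero homomorphism of line bundles on $C$ vanishing to order exactly $m_p-1$ at each $p$, so $L\cong T_C\otimes\mathcal O_C(D)$ with $D=\sum_{p\in C}(m_p-1)[p]$, and hence
\[
\deg L=\deg T_C+\deg D=(2-2g)+\sum_{p\in C}(m_p-1).
\]

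\textbf{Step 3 (degree of $L$, by curvature).} On the complement $U$ of the finite ramification locus, $F$ is an immersion and, in a local holomorphic coordinate $t$ on $C$, the velocity $F'=dF(\partial/\partial t)$ is a local holomorphic frame of $L$. Plugging $v=\partial/\partial t$ and $\zeta=F'$ into the Griffiths inequality $\theta_{S,h|_S}(v,\zeta)\le\theta_{E,h}(v,\zeta)$ for the subbundle $L\subseteq F^*T_X$ recalled above, and using $F^*\Theta(T_X,\omega)(\partial_t,\partial_{\bar t})=\Theta(T_X,\omega)(F',\overline{F'})$ together with the definition of $\HSC$, I obtain
\[
\Theta(L,F^*\omega|_L)(\partial_t,\partial_{\bar t})\,\|F'\|^2_\omega=\theta_{L,F^*\omega|_L}(\partial_t,F')\le\theta_{F^*T_X,F^*\omega}(\partial_t,F')=\|F'\|^4_\omega\,\HSC_\omega(F,[F'])\le-\kappa\,\|F'\|^4_\omega .
\]
Dividing by $\|F'\|^2_\omega>0$ and multiplying by $i\,dt\wedge d\bar t$, and noting $F^*\omega=\|F'\|^2_\omega\, i\,dt\wedge d\bar t$, this reads $\tfrac{i}{2\pi}\Theta(L,F^*\omega|_L)\le-\tfrac{\kappa}{2\pi}F^*\omega$ on $U$. (Equivalently, one may simply transcribe the normal-coordinate computation from the proof of the hyperbolicity criterion above, which yields $i\partial\bar\partial\log\|F'\|^2_\omega\ge\kappa F^*\omega$ on $U$ and hence the same bound for $i\Theta(L)=-i\partial\bar\partial\log\|F'\|^2_\omega$.) Since $\Theta(L,F^*\omega|_L)$ is smooth on all of $C$ and $C\setminus U$ is finite, this pointwise inequality persists on $C$ by continuity.

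\textbf{Step 4 (conclusion).} By Chern--Weil, $\deg L=\int_C\tfrac{i}{2\pi}\Theta(L,F^*\omega|_L)\le-\tfrac{\kappa}{2\pi}\int_CF^*\omega=-\tfrac{\kappa}{2\pi}\deg_\omega C$; comparing with Step 2,
\[
(2-2g)+\sum_{p\in C}(m_p-1)\le-\frac{\kappa}{2\pi}\deg_\omega C ,
\]
which is exactly the asserted inequality, while $\deg_\omega C=\int_CF^*\omega>0$ because $F$ is non-constant and $\omega>0$. The step demanding the most care is Step 3: pinning down the sign and the precise constant $\kappa/2\pi$, and keeping in mind that it is the local frame $F'$ --- not the bundle $L$ nor its metric --- that degenerates at the ramification points, which is exactly what allows Chern--Weil to be applied with no Lelong-type correction term.
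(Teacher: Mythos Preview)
Your proof is correct and follows essentially the same route as the paper's: realize the saturated image of $dF$ as the line subbundle $L\simeq T_C\otimes\mathcal O_C(D)$ of $(F^*T_X,F^*\omega)$, apply the Griffiths curvature-decreasing inequality together with the bound $\HSC_\omega\le-\kappa$ to obtain $\tfrac{i}{2\pi}\Theta(L)\le-\tfrac{\kappa}{2\pi}F^*\omega$, and then compare the Chern--Weil integral with the algebraic degree $(2-2g)+\sum_p(m_p-1)$. The only cosmetic difference is that the paper works directly with the desingularized local frame $t^{-(m_p-1)}\partial/\partial t$ at ramification points, whereas you establish the curvature inequality on the immersive locus and extend by continuity---both arguments are equivalent and yield the same bound.
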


We shall use this theorem especially in the case where $C$ is the normalization of a singular curve in $X$ and $F$ the normalization map. Observe that, in particular, we recover the well-known fact that on a compact hermitian manifold with negative holomorphic sectional curvature there are no rational nor elliptic curves (even singular), and that there are no rational (possibly singular) curves on a compact hermitian manifold with non positive holomorphic sectional curvature.

The main ingredients of the proof are two: first, as already observed, curvature decreases one passing to subbundles (even if one needs some adjustments here, since at the very beginning one merely gets a locally free subsheaf and not an actual subbundle), and second, the holomorphic sectional curvature for a Riemann surface is nothing else than the gaussian curvature so that in the compact case its total integral gives the opposite of its canonical degree, by the Gauss--Bonnet Theorem and the Hurwitz Formula.

\begin{proof}
The differential $F'$ of $F$ gives us a map $F'\colon T_C\to F^* T_X$. This map is injective at the level of sheaves, but not necessarily at the level of vector bundle, since $F'$ may vanish at some point. Taking into account these vanishing points counted with multiplicities, we obtain the following injection of vector bundles
$$
F'\colon T_C\otimes\mathcal O_C(D)\to F^*T_X,
$$
where we defined the effective divisor $D$ to be $\sum_{p\in C}(m_p-1)\, p$. 
Thus, via $F'$, we realized  $T_C\otimes\mathcal O_C(D)$ as a subbundle of the hermitian vector bundle $(F^*T_X, F^*\omega)$, and ---as such--- we can endow it with the induced metric $h=F^*\omega|_{T_C\otimes\mathcal O_C(D)}$ (observe the we consider $F^*\omega$ not as a pull-back of differential forms, but as a pull-back of hermitian metrics). 

Now, a local holomorphic frame for $T_C\otimes\mathcal O_C(D)$ around a point $p\in C$ is given by $\eta(t)=1/t^{m_p-1}\,\frac{\partial}{\partial t}$, where $t$ is a holomorphic coordinate centered at $p$. Call $\xi(t)=F'(\eta(t))\in (F^*T_X)_t=T_{X,F(t)}$, so that $\xi$ is a local holomorphic frame for $T_C\otimes\mathcal O_C(D)$ when seen as a subbundle of $F^*T_X$. We have, for the Griffiths curvature of $(T_C\otimes\mathcal O_C(D),h)$,
\begin{multline*}
\langle\Theta(T_C\otimes\mathcal O_C(D),h)(\partial/\partial t,\partial/\partial \bar t\,)\cdot \xi,\xi\rangle_h \\
=\Theta(T_C\otimes\mathcal O_C(D),h)(\partial/\partial t,\partial/\partial \bar t\,)\,\underbrace{||\xi||^2_h}_{=||\xi||^2_\omega}.
\end{multline*}
By the classical Griffiths' formulae, we have the following decreasing property for the Griffiths curvatures:
\begin{multline*}
\langle\Theta(T_C\otimes\mathcal O_C(D),h)(\partial/\partial t,\partial/\partial \bar t\,)\cdot \xi,\xi\rangle_h \\
\le \langle\Theta(F^*T_X,F^*\omega)(\partial/\partial t,\partial/\partial \bar t\,)\cdot \xi,\xi\rangle_{F^*\omega} \\
=  \langle F^*\Theta(T_X,\omega)(\partial/\partial t,\partial/\partial \bar t\,)\cdot \xi,\xi\rangle_{F^*\omega}\\
= \langle\Theta(T_X,\omega)(F'(\partial/\partial t),\overline{F'(\partial/\partial t)}\,)\cdot \xi,\xi\rangle_{\omega}\\
=|t^{m_p-1}|^2\,\langle\Theta(T_X,\omega)(\xi,\bar\xi\,)\cdot \xi,\xi\rangle_{\omega}
\le -\kappa |t^{m_p-1}|^2 ||\xi||^4_\omega,
\end{multline*}
where the last inequality holds since $\langle\Theta(T_X,\omega)(\xi,\bar\xi\,)\cdot \xi,\xi\rangle_{\omega}=||\xi||^4_\omega\,\operatorname{HSC}_\omega(\xi)$. Therefore, we obtain
$$
\Theta(T_C\otimes\mathcal O_C(D),h)(\partial/\partial t,\partial/\partial \bar t\,)\le-\kappa |t^{m_p-1}|^2 ||\xi||^2_\omega=i\kappa\,(F^*\omega)(\partial/\partial t,\partial/\partial \bar t\,),
$$
where by $F^*\omega$ here we mean the pull-back at the level of differential forms. Summing up, we have obtained that
$$
i\,\Theta(T_C\otimes\mathcal O_C(D),h)\le -\kappa\,F^*\omega,
$$
as real $(1,1)$-forms.
But then, 
$$
\int_C\frac{i}{2\pi}\,\Theta(T_C\otimes\mathcal O_C(D),h)\le-\frac{\kappa}{2\pi}\int_C F^*\omega=-\frac{\kappa}{2\pi}\deg_\omega C,
$$
and 
$$
\int_C\frac{i}{2\pi}\,\Theta(T_C\otimes\mathcal O_C(D),h)=\deg(T_C\otimes\mathcal O_C(D))=2-2g+\sum_{p\in C}(m_p-1),
$$
since $\deg(T_C)=2-2g$ by Hurwitz's formula. The statement follows.
\end{proof}

Following Demailly, we shall now exhibit a smooth projective surface which is Kobayashi hyperbolic, has an ample canonical bundle, but which cannot admit any hermitian metric with negative holomorphic sectional curvature. It will be constructed as a fibration of Kobayashi hyperbolic curves onto a Kobayashi hyperbolic curve, with at least one \lq\lq very\rq\rq{} singular fiber, which will violate the above criterion.

\begin{proposition}[Cf. {\cite[8.2. Theorem]{Dem97}}]
There exists a smooth projective surface $S$ which is hyperbolic (and hence with ample canonical bundle $K_S$) but does not carry any hermitian metric with negative holomorphic sectional curvature. Moreover, given any two smooth compact hyperbolic Riemann surfaces $\Gamma, \Gamma'$, such a surface can be obtained as a fibration $S\to\Gamma$, with hyperbolic fibers, in which (at least) one of the fibers is singular and has $\Gamma'$ as its normalization. 
\end{proposition}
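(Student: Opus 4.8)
The plan is to realise $S$ as a smooth surface fibred over $\Gamma$ whose general fibre is a smooth hyperbolic curve and one of whose fibres is an irreducible singular curve with normalisation $\Gamma'$, the singularity being drastic enough that Theorem \ref{algcrit} forbids any hermitian metric of negative holomorphic sectional curvature on $S$. First I would manufacture a singular plane model of the given $\Gamma'$: fix $q\in\Gamma'$ with local coordinate $t$, a large integer $k$, and a line bundle $L$ on $\Gamma'$ of large degree $d$, and take general sections $s_0\in H^0(\Gamma',L)$ with $s_0(q)\neq 0$, $s_1\in H^0(L(-kq))\setminus H^0(L(-(k+1)q))$ and $s_2\in H^0(L(-(k+1)q))\setminus H^0(L(-(k+2)q))$ — these spaces differ in dimension by one once $\deg L\gg 0$, by Riemann--Roch. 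For general choices the morphism $\phi=[s_0:s_1:s_2]\colon\Gamma'\to\mathbb P^2$ is birational onto its image $C_0$, an immersion with only nodes away from $q$ (a general projection to $\mathbb P^2$), while near $q$ it reads $t\mapsto(t^ku_1(t),t^{k+1}u_2(t))$ with $u_i$ units. Thus $C_0$ is an irreducible degree-$d$ plane curve with normalisation $\Gamma'$, a single unibranch point of multiplicity $k$ at $\phi(q)$ and otherwise only nodes, so the normalisation map $F\colon\Gamma'\to C_0\subset\mathbb P^2$ has $\sum_p(m_p-1)=k-1$ in the notation of Theorem \ref{algcrit}.

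Next I would put $C_0$ into a general pencil $\{C_\lambda\}_{\lambda\in\mathbb P^1}$ of degree-$d$ curves. For $d\geq 4$ the general member is smooth of genus $\binom{d-1}{2}\geq 3$, the base locus consists of $d^2$ distinct points, all smooth points of $C_0$, and the only singular members besides $C_0$ are $1$-nodal of geometric genus $\binom{d-1}{2}-1\geq 2$. Blowing up the base points produces a smooth projective surface $S_0$ with a fibration $p\colon S_0\to\mathbb P^1$ whose fibres are the strict transforms of the $C_\lambda$; since the base points are smooth points of $C_0$, the fibre over $0$ is isomorphic to $C_0$, and every fibre is either smooth of genus $\geq 3$, or $1$-nodal with normalisation of genus $\geq 2$, or $\cong C_0$. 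To make the base hyperbolic I would then base-change: choose a finite morphism $\psi\colon\Gamma\to\mathbb P^1$ (these exist since $\Gamma$ is projective) and, composing with an automorphism of $\mathbb P^1$ if necessary, arrange that its branch locus avoids the finitely many values $\lambda$ over which $C_\lambda$ is singular. Then $S:=\Gamma\times_{\mathbb P^1}S_0$ is smooth (étale over $S_0$ near the singular fibres of $p$, and a base change of the smooth part of $p$ elsewhere), connected, projective, and $\pi\colon S\to\Gamma$ is a fibration whose fibre over $x$ is the $p$-fibre over $\psi(x)$; in particular at least one fibre is isomorphic to $C_0$, and every fibre is a reduced curve whose normalisation has genus $\geq 2$.

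With $S$ in hand the remaining arguments are routine. $S$ is Kobayashi hyperbolic: an entire curve $f\colon\mathbb C\to S$ composed with $\pi$ lands in the hyperbolic curve $\Gamma$, hence is constant, so $f$ maps into a single fibre; every fibre, being a reduced curve with normalisation of genus $\geq 2$, is hyperbolic (an entire curve into it lifts to the normalisation), so $f$ is constant. Being a smooth projective hyperbolic surface, $S$ contains no rational curves, hence is minimal and, via $\pi$, is of general type; a minimal surface of general type carrying no $(-2)$-curve (again excluded by hyperbolicity) has ample canonical bundle, so $K_S$ is ample. Finally I would fix $k$ with $k-1>2g(\Gamma')-2$ at the outset. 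If $S$ carried a hermitian metric $\omega$ with $\HSC_\omega<0$, compactness gives $\HSC_\omega\leq-\kappa$ for some $\kappa>0$, and Theorem \ref{algcrit} applied to $F\colon\Gamma'\to C_0\subset S$ yields $2g(\Gamma')-2\geq\frac{\kappa}{2\pi}\deg_\omega C_0+(k-1)>2g(\Gamma')-2$, a contradiction; hence no such metric exists.

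The genuinely delicate part is the construction itself — Step 1 together with the bookkeeping of the pencil and the base change: realising the \emph{arbitrary} curve $\Gamma'$ as a curve in a surface with a controlled unibranch singularity of prescribed large multiplicity, fitting it into a pencil whose blow-up is smooth with the copy of $C_0$ surviving unchanged as a fibre, and then performing the fibre product with $\Gamma$ without introducing surface singularities, non-hyperbolic fibres, or rational curves. Once these are set up, the hyperbolicity of $S$, the ampleness of $K_S$, and the obstruction to negative holomorphic sectional curvature are all immediate consequences of Theorem \ref{algcrit} and standard surface theory.
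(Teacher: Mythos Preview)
Your argument is correct and follows the same overall strategy as the paper---build a fibration $S\to\Gamma$ with hyperbolic fibres, one of which is normalised by $\Gamma'$ through a point of large multiplicity, then invoke Theorem~\ref{algcrit}---but the technical implementation differs in two places worth noting.

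First, to produce the singular plane curve $C_0$: the paper performs local analytic surgery on $\Gamma'$, gluing in a monomial singularity of type $(a,b)$ to obtain an abstract singular curve $\Gamma''$, then embeds $\Gamma''$ in some $\mathbb P^M$ and projects generically to $\mathbb P^2$. You instead construct the plane model directly by a linear system $[s_0:s_1:s_2]$ on $\Gamma'$ with prescribed high-order vanishing at $q$. Your route is more algebraic and sidesteps the analytic gluing; the price is that the ``general projection has only nodes away from $q$'' claim is not literally a general projection (the centre is constrained by the osculating conditions at $q$), though the usual incidence-variety argument still goes through since the constraint is supported at a single point.

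Second, to build the family: the paper embeds $\Gamma$ directly into the full parameter space $\mathbb P^N$ of degree-$d$ plane curves through $[C_0]$, pulls back the universal family, and checks smoothness of the resulting surface by an explicit Jacobian computation at the singular points of $C_0$. You instead take a general \emph{pencil} through $C_0$, blow up the base locus to get a smooth $S_0\to\mathbb P^1$, and then base-change by a cover $\psi\colon\Gamma\to\mathbb P^1$ whose branch locus avoids the critical values of the pencil. This is cleaner: smoothness of $S_0$ is immediate (blow-up of $\mathbb P^2$ at reduced points), and smoothness of the fibre product follows from the transversality of the ramification and discriminant loci, so you avoid the paper's Jacobian matrix entirely. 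The paper's approach, on the other hand, produces $S$ in one step and does not need an auxiliary map $\Gamma\to\mathbb P^1$.

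Both routes yield the same conclusion; yours trades the paper's hands-on linear-algebra verification for more standard moving-lemma and base-change arguments.
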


\begin{proof}
Take any compact hyperbolic Riemann surface $\Gamma'$, and let $g=g(\Gamma')\ge 2$ be its genus. Now, we modify it into a singular compact Riemann surface $\Gamma''$ of the same genus, whose normalization is $\Gamma'$. 

In order to do so, consider a pair of positive relatively prime integers $(a,b)$, with $a<b$, and the associated affine plane curve $C$ in $\mathbb C^2$ given by the equation $y^a-x^b=0$, which has a monomial singularity of type $(a,b)$ at $0\in\mathbb C^2$. Its normalization is given by $\mathbb C\ni t\mapsto (t^a,t^b)\in\mathbb C^2$. Choose integers $n,m$ such that $na+mb=1$. Then, the restriction of the rational function on $\mathbb C^2$ defined by $(x,y)\mapsto x^ny^m$ to $C$ gives a holomorphic coordinate on it minus the singular point (this is actually the inverse map of the normalization map outside the singularity). In particular, the set of points $(x,y)\in C$ such that $0<|x|<1$ is biholomorphic to the punctured unit disc.

Now, take a point $x_0\in\Gamma'$ and choose a holomorphic coordinate centered at $x_0$ such that we can select a neighborhood of $x_0$ whose image is the unit disc \textsl{via} this coordinate. Finally, remove the point $x_0$ in order to obtain a holomorphic coordinate chart whose image is the punctured unit disc. By identifying with the punctured unit disc constructed above, we replace this neighborhood of $x_0$ with the set of point $(x,y)\in C$ such that $|x|<1$, thus creating the desired singularity at $x_0$. Call the resulting curve $\Gamma''$. By construction, the normalization of $\Gamma''$ is exactly $\Gamma'$, and $\Gamma''$ has a single singular point, whose singularity type is plane and monomial of type $(a,b)$ (for an excellent and very elementary discussion around this subject we refer the reader to \cite[Chapter III, Section 2]{Mir95}).

Next, we embed $\Gamma''$ in some large projective space, and then we project it to $\mathbb P^2$, in such a way that the singular point is left untouched and outside it we create at most a finite number of nodes (\textsl{i.e.} plane monomial singularity of type $(2,2)$). Call the resulting projective plane curve $C_0$, whose normalization is of course again $\Gamma'$. Observe that the normalization map $\nu\colon\Gamma'\to C_0$ is an immersion outside the (single) preimage of the first singular point we created. On the other hand, at this point it has multiplicity $a$.

In order to obtain the desired surface $S$, we select then $a$ so that $a-1>2g-2$, \textsl{i.e.} $a\ge 2g$. Such a surface $S$ then does contain a curve which violates the criterium given in Theorem \ref{algcrit}, and we are done.

Take a (reduced) homogeneous polynomial equation $P_0(z_0,z_1,z_2)=0$ for $C_0$ in $\mathbb P^2$. Then, we necessarily have $d=\deg P_0\ge 4$, since otherwise $C_0$ would be normalized by a rational or an elliptic curve. Next, complete $P_0$ into a basis $\{P_0,P_1,\dots,P_N\}$ of the space $H^0(\mathbb P^2,\mathcal O(d))$ of homogeneous polynomials of degree $d$ in three variables, and consider the corresponding universal family 
$$
\mathcal U=\bigl\{\bigl([z_0:z_1:z_2],[\alpha_0:\cdots:\alpha_N]\bigr)\in\mathbb P^2\times\mathbb P^N\mid\sum_{j=0}^N\alpha_j\,P_j(z)=0\bigr\}\subset\mathbb P^2\times\mathbb P^N,
$$
of curves of degree $d$ in $\mathbb P^2$, together with the projection $\pi\colon\mathcal U\to\mathbb P^N$. Our starting curve $C_0$ is then the fiber $U_{[1:0:\cdots:0]}$ over the point $[1:0:\cdots:0]\in\mathbb P^N$. Now, we embed the first curve $\Gamma$ into $\mathbb P^N$ (this is of course possible since $N\ge 3$) in such a way that $[1:0:\cdots:0]\in\Gamma$. The desired fibration $S\to\Gamma$ will be obtained as the pull-back family
$$
\xymatrix{S=\mathcal U\times_{\mathbb P^N}\Gamma \ar@{->}[r]\ar@{->}[d]& \mathcal U \ar@{->}[d] \\ \Gamma \ar@{^{(}->}[r]& \mathbb P^N.}
$$
Of course, we have to select carefully the embedding of $\Gamma$ into $\mathbb P^N$, so that $S$ will be non singular, and in such a way that we have a good control of the singular fibers out of $U_{[1:0:\cdots:0]}$.

In order to do so, the first observation is that ---as it is well-known--- the locus $Z$ in $\mathbb P^N$ which corresponds to singular curve is an algebraic hypersurface and, moreover, the locus $Z'\subset Z$ which corresponds to curves which have not only one node in their singularity set is of codimension $2$ in $\mathbb P^N$. In particular, by possibly moving $\Gamma$ with a generic projective automorphism of $\mathbb P^N$ leaving fixed $[1:0:\cdots:0]$, we can suppose that $\Gamma\cap Z'=\{[1:0:\cdots:0]\}$, so that all the fibers of $S$, except from $C_0$, are either smooth, or with a single node. If such an $S$ were non singular, we would be done. Indeed, by Plücker's formula, the smooth fibers have genus $(d-1)(d-2)/2\ge 3$, $U_{[1:0:\cdots:0]}$ has genus $g\ge 2$ by construction, and the other singular fibers have genus $(d-1)(d-2)/2-1\ge 2$, since they have only one node. Therefore, $S$ is a fibration onto a hyperbolic Riemann surface with all hyperbolic fibers and is then hyperbolic (and hence with ample canonical bundle), with a fiber which contradicts Theorem \ref{algcrit}.

So we are left to checking the smoothness of $S$, knowing that we can possibly use again generic automorphisms of $\mathbb P^N$ leaving fixed $[1:0:\cdots:0]$ to move $\Gamma$. Thus, since $\Gamma$ is embedded in $\mathbb P^N$, we can think of $S$ as included in $\mathcal U$, and since $\mathcal U$ is smooth, Bertini's theorem immediately implies that $S$ can be chosen non singular outside $U_{[1:0:\cdots:0]}$. Now, what about points along $U_{[1:0:\cdots:0]}$? Fix such a point $([z_0:z_1:z_2],[1:0:\cdots:0])\in U_{[1:0:\cdots:0]}$, and suppose, just to fix ideas, that $z_0\ne 0$. Take the corresponding affine coordinates, say $((z,w),(a_1,\dots,a_N))$ around this point, set $p_j(z,w)=P_j(1,z,w)$ to be the dehomogenization of the $P_j$'s, and let $f_1(a),\dots,f_r(a)$ be affine equations of the curve $\Gamma$. Then, we have to check the rank of the following Jacobian matrix at the point $\bigl((z,w),(0,\dots,0)\bigr)$, the affine equation for $\mathcal U$ being $p_0(z,w)+\sum_{j=1}^N a_j\,p_j(z,w)=0$:
$$
\begin{pmatrix}
\frac{\partial p_0}{\partial z} (z,w)& \frac{\partial p_0}{\partial w} (z,w) & p_1(z,w) & \cdots & p_N(z,w) \\ 
0 & 0 & \frac{\partial f_1}{\partial a_1}(0) & \cdots & \frac{\partial f_1}{\partial a_N}(0) \\
\vdots & \vdots & \vdots &  & \vdots \\
0 & 0 & \frac{\partial f_r}{\partial a_1}(0) & \cdots & \frac{\partial f_r}{\partial a_N}(0)
\end{pmatrix}.
$$
Observe that the lower right block has rank equal to $N-1$ being $\Gamma$ smooth. Call this block $A$ and let $v=(v_1,\dots,v_n)\in\mathbb C^N$ be a generator for the kernel of this block, which is thus a nonzero tangent vector to $\Gamma$ at $0$. In order to get rank $N+2-2=N$ for the entire Jacobian matrix we have only to worry about (the finitely many) singular points of $C_0=U_{[1:0:\cdots:0]}$, since at regular points either $\frac{\partial p_0}{\partial z}$ or $\frac{\partial p_0}{\partial w}$ is non zero. If $(z,w)$ is a singular point for $U_{[1:0:\cdots:0]}$, then the condition for $S$ to be smooth around this point is given by $(p_1,\dots,p_N)^t\not\in \operatorname{Im}(A^t)=\bigl(\overline{\ker A}\bigr)^\perp$, that is
$$
\sum_{j=1}^N v_j\,p_j(z,w)\ne 0.
$$
This can be of course achieved by possibly moving again $\Gamma$ with a generic projective automorphism of $\mathbb P^N$ leaving fixed $[1:0:\cdots:0]$, since only the tangent line of $\Gamma$ at $0$ is concerned in the required condition.
\end{proof}
\section{The Wu--Yau theorem}

In this section we go into the details of the proof of Wu--Yau's theorem on the positivity of the canonical class for projective manifolds endowed with a Kähler metric of negative holomorphic sectional curvature. We will present a proof which follows, for the first part, almost \textsl{verbatim} the original proof of Wu and Yau. On the other hand, the conclusion will be achieved with an approach which is more pluripotential in flavor, taken from \cite{DT16}. Finally, we shall discuss at the end of this section several generalizations of this result (including the Kähler case, and weaker notions of negativity).

The proof is achieved in essentially three steps, after a reduction as follows. As we have seen, the negativity of the curvature (or even its non-positivity) implies the non existence of rational curves on $X$. Then, by Mori's Cone Theorem, we deduce that the canonical bundle of $X$ is nef. But then, it is sufficient to prove that $c_1(K_X)^n>0$, which in this case means that the canonical bundle is big. Indeed, if $K_X$ is big and there are no rational curves on $X$ one can conclude the ampleness of the canonical bundle via the following standard lemma.

\begin{lemma}[Exercise 8, page 219 of \cite{Deb01}]\label{lem:ratcurv}
Let $X$ be a smooth projective variety of general type which contains no rational curves. Then, $K_X$ is ample.
\end{lemma}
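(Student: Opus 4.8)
The plan is to combine three standard facts from Mori theory: the Cone Theorem, the Base-point-free Theorem, and the uniruledness of exceptional loci. First I would record that the hypotheses promote to the statement that $K_X$ is \emph{nef and big}. Bigness is immediate, since by definition a smooth projective variety of general type has $c_1(K_X)^n>0$. For nefness, Mori's Cone Theorem writes $\overline{NE}(X)$ as the closure of $\overline{NE}(X)_{K_X\ge 0}$ together with countably many $K_X$-negative extremal rays, each spanned by the class of a rational curve on $X$. Since $X$ contains no rational curves there are no such rays, so $\overline{NE}(X)=\overline{NE}(X)_{K_X\ge 0}$, that is, $K_X$ is nef (this is exactly the step already invoked in the paragraph preceding the lemma).

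Next I would apply the Base-point-free Theorem of Kawamata--Shokurov: as $X$ is smooth the pair $(X,0)$ is klt, and for $a\ge 2$ the divisor $aK_X-(K_X+0)=(a-1)K_X$ is nef and big; hence $\mathcal O_X(mK_X)$ is globally generated for all sufficiently divisible $m$. Let $\phi\colon X\to Y:=\operatorname{Proj}\bigoplus_{k\ge 0}H^0(X,kK_X)$ be the associated morphism, i.e. the canonical model. It is a projective birational morphism onto a normal projective variety $Y$ with $\dim Y=\kappa(X)=\dim X$ (here bigness is used again), and there is an ample line bundle $A$ on $Y$ with $\mathcal O_X(mK_X)\cong\phi^*A$ for $m\gg 0$ divisible.

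It then suffices to show that $\phi$ is an isomorphism, for then $K_X\cong\tfrac1m\phi^*A$ is the pullback of an ample bundle along an isomorphism, hence ample. Suppose not. Being birational onto a normal variety, a non-isomorphism $\phi$ cannot be quasi-finite (Zariski's Main Theorem), so it has a fiber $F$ with $\dim F\ge 1$. Here I would invoke the classical fact that every positive-dimensional fiber of a projective birational morphism from a smooth projective complex variety is covered by rational curves (equivalently, the exceptional locus is uniruled), cf. \cite{Deb01} and Koll\'ar's book. A rational curve contained in $F$ is a rational curve on $X$, contradicting the hypothesis; hence $\phi$ is an isomorphism.

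The only step here that is not pure bookkeeping is the last one: producing a rational curve inside a contracted fiber, i.e. the uniruledness of the exceptional locus of a birational morphism from a smooth projective variety. Everything else -- bigness, nefness from the Cone Theorem, semiampleness from the Base-point-free Theorem, and the reduction to $\phi$ being an isomorphism via Zariski's Main Theorem -- is routine. (If one wished to avoid quoting that uniruledness statement, one could instead combine the Nakai--Moishezon-type criterion for nef divisors with a bend-and-break argument on the null locus $\{\,V : K_X^{\dim V}\cdot V=0\,\}$, but this still ultimately requires exhibiting rational curves on the locus where $K_X$ fails to be strictly positive, so it does not genuinely circumvent the main point.)
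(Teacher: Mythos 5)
Your reduction is fine and matches the paper's up to the last step: bigness is the definition of general type, nefness follows from the Cone Theorem since there are no rational curves, the Base-point-free Theorem gives semi-ampleness, and the problem becomes showing that the birational morphism $\phi$ onto the canonical model contracts nothing. The gap is in the step you yourself single out as the only non-trivial one. The statement you invoke --- that every positive-dimensional fiber of a projective birational morphism from a smooth projective variety is covered by rational curves --- is \emph{false}. A standard counterexample: let $C$ be a curve of genus $2$, $L$ a very ample, projectively normal line bundle on $C$, and let $\bar Y\subset\mathbb P^{N+1}$ be the projective cone over $C\subset\mathbb P^N$. The blow-up of the vertex is the smooth projective ruled surface $S=\mathbb P(\mathcal O_C\oplus L^{-1})$, and $S\to\bar Y$ is a projective birational morphism whose unique positive-dimensional fiber is the negative section, isomorphic to $C$, which contains no rational curve at all. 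The correct theorems in this circle (Kawamata on lengths of extremal rays, or the results in Debarre's Chapter~7 you are thinking of) all require the contraction to be $(K_X+\Delta)$-negative for a klt pair; uniruledness of exceptional loci simply fails without that negativity hypothesis. Your contraction $\phi$ is $K_X$-\emph{trivial} on its fibers, so none of these results applies as stated.

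This is exactly the point the paper's proof is designed to handle. Given an irreducible curve $C$ contracted by $\phi$, so $K_X\cdot C=0$, one uses bigness to write $m(K_X-\varepsilon H)\sim D$ effective for an ample $H$ and small rational $\varepsilon>0$, and sets $\Delta=\varepsilon' D$ with $\varepsilon'$ small enough that $(X,\Delta)$ is klt. Then $(K_X+\Delta)\cdot C=-\varepsilon\varepsilon' m\,H\cdot C<0$, so the \emph{logarithmic} Cone Theorem forces the existence of a $(K_X+\Delta)$-negative extremal ray, which is spanned by the class of a rational curve in $X$ --- contradicting the hypothesis. In other words, the perturbation by an effective divisor is not a cosmetic variant of your argument: it is the mechanism that manufactures the $K$-negativity without which no rational curve can be produced. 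Your closing parenthetical concedes that any route must "exhibit rational curves on the null locus of $K_X$"; the klt perturbation plus the log Cone Theorem is precisely how one does that, and your proof is incomplete without it (or without an appeal to a correctly stated substitute, e.g.\ Kawamata's theorem applied after the same perturbation).
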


Here is a proof, for the sake of completeness.

\begin{proof}
Since there are no rational curves on $X$, Mori's theorem implies as above that $K_X$ is nef. Since $K_X$ is big and nef, the Base Point Free theorem tells us that $K_X$ is semi-ample. If $K_X$ were not ample, then the morphism defined by (some multiple of) $K_X$ would be birational but not an isomorphism. In particular, there would exist an irreducible curve $C\subset X$ contracted by this morphism. Therefore, $K_X\cdot C=0$. Now, take any ample divisor $H$. For any $\varepsilon>0$ rational and small enough, $K_X-\varepsilon H$ remains big and thus some large positive multiple, say $m(K_X-\varepsilon H)$, of $K_X-\varepsilon H$ is linearly equivalent to an effective divisor $D$. Set $\Delta=\varepsilon' D$, where $\varepsilon'>0$ is a rational number.  We have:
$$
\begin{aligned}
(K_X+\Delta)\cdot C & =\varepsilon'\,D\cdot C \\
& =\varepsilon'm(K_X-\varepsilon H)\cdot C \\
& =-\varepsilon\varepsilon'm\,H\cdot C<0.
\end{aligned}
$$
Finally, if $\varepsilon'$ is small enough, then $(X,\Delta)$ is a klt pair. Thus, the (logarithmic version of the) Cone Theorem would give the existence of an extremal ray generated by the class of a rational curve in $X$, contradiction.
\end{proof}

\subsection{Description of the main steps of the proof.}

Keeping in mind that what we have to show is that $c_1(K_X)^n>0$, we illustrate now the steps of the proof.

\subsubsection*{Step 1: Solving an approximate Kähler--Einstein equation.}

Let $\omega$ be our fixed Kähler metric (with negative holomorphic sectional curvature, but we shall not use this hypothesis for the moment).

\begin{claim}\label{step1}
For each $\varepsilon>0$ there exists a unique smooth function $u_\varepsilon \colon X\to\mathbb R$ such that
$$
\omega_\varepsilon:=\varepsilon\omega-\operatorname{Ric}_\omega+\frac{i}{2\pi}\partial\bar\partial u_{\varepsilon}
$$
is a positive $(1,1)$-form (hence Kähler, belonging to the cohomology class $c_1(K_X)+\varepsilon[\omega])$ form satisfying the Monge--Ampère equation
$$
\omega_\varepsilon^n=e^{u_\varepsilon}\,\omega^n.
$$
In particular, 
$$
\operatorname{Ric}(\omega_\varepsilon)=-\omega_\varepsilon+\varepsilon\omega,
$$
whence the terminology \lq\lq approximate Kähler--Einstein\rq\rq{}, and we have the following uniform upper bound:
$$
\sup_X u_\varepsilon\le C,
$$
where the constant $C$ depends only on $\omega$ and $n=\dim X$. Observe finally, that in particular, $\operatorname{Ric}(\omega_\varepsilon)\ge-\omega_\varepsilon$.
\end{claim}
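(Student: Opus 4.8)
The plan is to recognize the equation for $u_\varepsilon$ as a complex Monge--Amp\`ere equation of Aubin--Yau type whose right-hand side carries the favourable sign, and to solve it by the classical machinery. First I would record that, as already observed, $X$ contains no rational curves, so by Mori's cone theorem $K_X$ is nef; since $[\omega]$ is a K\"ahler class, the class $c_1(K_X)+\varepsilon[\omega]$ is then K\"ahler for every $\varepsilon>0$. Fix once and for all a K\"ahler form $\eta_\varepsilon$ in this class. The form $\varepsilon\omega-\operatorname{Ric}_\omega$ is a smooth real $(1,1)$-form in the same class, so by the $\partial\bar\partial$-lemma there is a smooth $f_\varepsilon$ with $\varepsilon\omega-\operatorname{Ric}_\omega=\eta_\varepsilon+\frac{i}{2\pi}\partial\bar\partial f_\varepsilon$. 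Writing $u_\varepsilon=\psi-f_\varepsilon$ one gets $\omega_\varepsilon=\eta_\varepsilon+\frac{i}{2\pi}\partial\bar\partial\psi$ (so $\omega_\varepsilon$ automatically lies in $c_1(K_X)+\varepsilon[\omega]$), and, setting $e^{-f_\varepsilon}\omega^n=e^{g_\varepsilon}\eta_\varepsilon^n$ for a smooth $g_\varepsilon$, the Monge--Amp\`ere equation $\omega_\varepsilon^n=e^{u_\varepsilon}\omega^n$ becomes
$$
\Bigl(\eta_\varepsilon+\tfrac{i}{2\pi}\partial\bar\partial\psi\Bigr)^n=e^{\psi+g_\varepsilon}\,\eta_\varepsilon^n .
$$

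The existence of a unique smooth solution $\psi$ of this equation is exactly (a mild reformulation of) the theorem of Aubin and Yau used to produce K\"ahler--Einstein metrics of negative Ricci curvature; I would cite it, or sketch the continuity method. For the latter one deforms the exponent from $0$ to $\psi$, i.e. solves $(\eta_\varepsilon+\frac{i}{2\pi}\partial\bar\partial\psi_t)^n=e^{t\psi_t+g_\varepsilon}\eta_\varepsilon^n$: the case $t=0$ is Yau's solution of the Calabi conjecture, openness follows from the implicit function theorem because the linearization $\Delta_{\omega_t}-t$ is invertible for $t\ge0$, and closedness from the a priori $C^0$, $C^2$ and higher-order estimates, the $C^0$ one being immediate from the maximum principle precisely thanks to the sign of $t\psi_t$. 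Uniqueness is again a maximum-principle argument: if $\psi_1,\psi_2$ both solve the displayed equation, at a point where $\psi_1-\psi_2$ is maximal one has $\frac{i}{2\pi}\partial\bar\partial(\psi_1-\psi_2)\le0$, hence $e^{\psi_1+g_\varepsilon}\eta_\varepsilon^n=(\eta_\varepsilon+\frac{i}{2\pi}\partial\bar\partial\psi_1)^n\le(\eta_\varepsilon+\frac{i}{2\pi}\partial\bar\partial\psi_2)^n=e^{\psi_2+g_\varepsilon}\eta_\varepsilon^n$, so $\psi_1\le\psi_2$ everywhere, and symmetrically $\psi_2\le\psi_1$.

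It then remains to verify the two consequences. Applying $-\frac{i}{2\pi}\partial\bar\partial\log$ to the volume densities on both sides of $\omega_\varepsilon^n=e^{u_\varepsilon}\omega^n$ gives $\operatorname{Ric}(\omega_\varepsilon)=\operatorname{Ric}_\omega-\frac{i}{2\pi}\partial\bar\partial u_\varepsilon$; substituting $\frac{i}{2\pi}\partial\bar\partial u_\varepsilon=\omega_\varepsilon-\varepsilon\omega+\operatorname{Ric}_\omega$ from the definition of $\omega_\varepsilon$ yields $\operatorname{Ric}(\omega_\varepsilon)=-\omega_\varepsilon+\varepsilon\omega$, whence also $\operatorname{Ric}(\omega_\varepsilon)\ge-\omega_\varepsilon$. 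For the uniform upper bound, let $x_0$ be a point where $u_\varepsilon$ attains its maximum; there $\frac{i}{2\pi}\partial\bar\partial u_\varepsilon(x_0)\le0$, so $0<\omega_\varepsilon(x_0)\le(\varepsilon\omega-\operatorname{Ric}_\omega)(x_0)$. By compactness of $X$ there is $C_0=C_0(\omega)$ with $-\operatorname{Ric}_\omega\le C_0\,\omega$, so for $\varepsilon\le1$ one gets $\omega_\varepsilon(x_0)\le(1+C_0)\,\omega(x_0)$ and, taking $n$-th powers, $e^{u_\varepsilon(x_0)}\omega^n(x_0)=\omega_\varepsilon^n(x_0)\le(1+C_0)^n\omega^n(x_0)$; hence $\sup_X u_\varepsilon=u_\varepsilon(x_0)\le n\log(1+C_0)=:C$, a constant depending only on $\omega$ and $n$.

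The main obstacle is, unsurprisingly, the existence statement: the a priori estimates that make the continuity method close up, in particular the second-order (complex Hessian) estimate and the subsequent Evans--Krylov $C^{2,\alpha}$ bound. These are entirely standard in the K\"ahler--Einstein literature, and the favourable sign of the right-hand side makes the zeroth-order estimate essentially free (in contrast with the Calabi--Yau normalization, where Moser iteration is needed). Since the statement records only the \emph{uniform} upper bound on $u_\varepsilon$, and not uniform higher-order control, it is legitimate to appeal to the Aubin--Yau theorem for existence and to keep the remaining arguments — the Ricci identity and the two maximum-principle estimates — fully self-contained.
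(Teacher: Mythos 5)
Your proposal is correct and follows essentially the same route as the paper: nefness of $K_X$ via Mori plus the $\partial\bar\partial$-lemma to reduce to a Monge--Amp\`ere equation with the favourable sign $e^{\psi+g_\varepsilon}$, solved by citing the Aubin--Yau/Yau theorem, followed by the same Ricci identity and the same maximum-principle argument for the uniform upper bound (your version of the latter, with $-\operatorname{Ric}_\omega\le C_0\,\omega$, is if anything slightly cleaner in making the constant visibly independent of $\varepsilon$).
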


\subsubsection*{Step 2: A laplacian estimate involving the holomorphic sectional curvature.} This step is somehow a refinement of the laplacian estimate needed in order to achieve the classical $C^2$-estimates to solve the complex Monge--Ampère equation on compact Kähler manifolds. In the classical setting an upper bound for the holomorphic bisectional curvature is used. Here we shall employ a lemma due to Royden in order to use only the weaker information given by the bound on the holomorphic sectional curvature, as in the hypotheses. The crucial part of this step is the following.

\begin{claim}\label{step2}
Suppose $-\kappa<0$ is an upper bound for the holomorphic sectional curvature of $\omega$. Suppose moreover that $\omega'$ is another Kähler metric on $X$ whose Ricci curvature is comparable with $\omega$ and $\omega'$ as follows:
$$
\operatorname{Ric}(\omega')\ge-\lambda\omega'+\mu\omega,
$$
where $\lambda,\mu$ are non negative constants. Define a smooth function $S\colon X\to\mathbb R_{>0}$ to be the trace of $\omega$ with respect to $\omega'$, \textsl{i.e.}
$$
S:=\operatorname{tr}_{\omega'}\omega=n\,\frac{\bigl(\omega'\bigr)^{n-1}\wedge\omega}{\bigl(\omega'\bigr)^{n}}.
$$
Then, the following differential inequality holds:
\begin{equation}\label{diffineq}
-\Delta_{\omega'}\log S\ge\biggl(\frac{\kappa(n+1)}{2n}+2\pi\frac\mu n\biggr)S-2\pi\lambda.
\end{equation}
\end{claim}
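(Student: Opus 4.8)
The plan is to compute $-\Delta_{\omega'}\log S$ at an arbitrary point via the Chern--Lu / Aubin--Yau inequality, and then to improve the curvature term by exploiting that only the holomorphic \emph{sectional} curvature of $\omega$ is controlled, using Royden's algebraic lemma. First I would fix a point $x_0\in X$ and choose holomorphic coordinates that are simultaneously "nice" for both metrics: normal coordinates for $\omega'$ at $x_0$ (so $g'_{i\bar j}=\delta_{ij}$ and $dg'_{i\bar j}=0$ at $x_0$) and, after a further unitary change, diagonalizing $\omega$ against $\omega'$ so that $g_{i\bar j}(x_0)=\lambda_i\,\delta_{ij}$ with $\lambda_i>0$; then $S(x_0)=\sum_i\lambda_i$. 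The standard computation of the complex Laplacian of $\log\operatorname{tr}_{\omega'}\omega$ (this is exactly the Schwarz-lemma-type calculation behind the $C^2$-estimate in the Monge--Amp\`ere method; see e.g. \cite{Dem}) yields at $x_0$
\begin{equation*}
\Delta_{\omega'}\log S \;\ge\; \frac{1}{S}\Bigl( \operatorname{tr}_{\omega'}\!\Ric(\omega') \cdot\text{(wrong metric)}\ \text{terms} \Bigr) \;-\;\frac{1}{S}\sum_{i,j,k} R^{\omega}_{i\bar i k\bar k}\,\frac{\lambda_k}{\lambda_i} \;+\;(\text{gradient terms}\ \ge 0),
\end{equation*}
where $R^\omega$ denotes the Chern curvature of $\omega$; more precisely, after discarding the manifestly nonnegative gradient terms and inserting the hypothesis $\Ric(\omega')\ge-\lambda\omega'+\mu\omega$, one gets
\begin{equation*}
-\Delta_{\omega'}\log S \;\ge\; \frac{1}{S}\sum_{i,k} R^{\omega}_{i\bar i k\bar k}\,\frac{\lambda_k}{\lambda_i} \;+\;\frac{2\pi\mu}{S}\,\operatorname{tr}_{\omega'}\!\Bigl(\tfrac{?}{?}\Bigr)\ \cdots\ -\,2\pi\lambda,
\end{equation*}
the point being that the Ricci lower bound contributes a $-2\pi\lambda$ term and a good $+2\pi\mu(\cdot)$ term, while the genuinely delicate contribution is the curvature sum $\tfrac1S\sum_{i,k}R^\omega_{i\bar i k\bar k}\tfrac{\lambda_k}{\lambda_i}$.

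The heart of the argument is then to bound this curvature sum \emph{from below} using only $\HSC_\omega\le-\kappa$. A naive bound would require controlling the full bisectional curvature $R^\omega_{i\bar i k\bar k}$, which the hypothesis does not give. This is precisely where \textbf{Royden's lemma} enters: it is a purely linear-algebraic statement asserting that if the holomorphic sectional curvature of a Kähler curvature tensor is $\le-\kappa$, then for any positive reals $\lambda_1,\dots,\lambda_n$ one has
\begin{equation*}
\sum_{i,k} R^\omega_{i\bar i k\bar k}\,\lambda_i\lambda_k \;\le\; -\frac{\kappa}{2}\,\frac{n+1}{n}\Bigl(\sum_i \lambda_i\Bigr)^2\ \ \text{(equivalently, for the mixed sum }\sum_{i,k}R^\omega_{i\bar ik\bar k}\tfrac{\lambda_k}{\lambda_i}\text{ one gets the matching bound)}.
\end{equation*}
I would invoke this lemma in the form adapted to the sum $\sum_{i,k}R^\omega_{i\bar i k\bar k}\tfrac{\lambda_k}{\lambda_i}$ that appears in the Laplacian computation; combined with $\sum_i\lambda_i=S$ this produces the term $\tfrac{\kappa(n+1)}{2n}\,S$. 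The factor $\tfrac{n+1}{2n}$ — which is exactly the averaging constant relating $\HSC$ to scalar curvature in Proposition \ref{average} — is the signature that Royden's lemma is being used optimally rather than the crude bound $\HBC\le\tfrac{?}{?}\HSC$.

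Finally I would reassemble: the curvature term gives $\ge\tfrac{\kappa(n+1)}{2n}S$, the $\mu$-term in the Ricci hypothesis gives $\ge\tfrac{2\pi\mu}{n}S$ (the $\tfrac1n$ arising because $\operatorname{tr}_{\omega'}\omega$ against the inequality $\mu\omega$ must be estimated using $\lambda_i\le S$, i.e. $\tfrac{1}{S}\sum_i \tfrac{\mu\lambda_i}{\lambda_i}\cdot\lambda_i\ \cdots$ — this elementary estimate $\tfrac{1}{S}\sum_i\mu \ge \tfrac{\mu}{?}$ needs a short sanity check, see below), and the $-\lambda\omega'$ term contributes $-2\pi\lambda$; adding these yields \eqref{diffineq}. \textbf{The main obstacle} I anticipate is twofold: first, getting the constants in Royden's lemma to come out as exactly $\tfrac{n+1}{2n}$ in the precise combination $\sum_{i,k}R^\omega_{i\bar ik\bar k}\tfrac{\lambda_k}{\lambda_i}$ (Royden's original statement is for the quadratic form $\sum R_{i\bar ik\bar k}\lambda_i\lambda_k$, and one must verify the same constant survives the passage to the ratio form that genuinely appears, which is the standard but slightly fiddly symmetrization $\tfrac12(\tfrac{\lambda_k}{\lambda_i}+\tfrac{\lambda_i}{\lambda_k})\ge 1$ trick); second, keeping careful track of the $2\pi$ normalizations throughout, since $\Ric_\omega=\tfrac{i}{2\pi}\operatorname{tr}\Theta$ is normalized cohomologically while the pointwise curvature computation naturally produces $i\partial\bar\partial$ without the $2\pi$, so every curvature-to-Ricci conversion carries a factor that must be shepherded consistently to land on the stated coefficients $\tfrac{\kappa(n+1)}{2n}+\tfrac{2\pi\mu}{n}$ and $2\pi\lambda$. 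Discarding the gradient terms is harmless for the \emph{inequality} (they are nonnegative by Cauchy--Schwarz, exactly as in the classical Schwarz lemma computation), so no subtlety there.
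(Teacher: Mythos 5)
Your proposal follows essentially the same route as the paper: the Chern--Lu/Schwarz-lemma computation of $\Delta_{\omega'}\log\operatorname{tr}_{\omega'}\omega$, the Ricci lower bound feeding the $-2\pi\lambda$ and $+\tfrac{2\pi\mu}{n}S$ terms, the gradient terms absorbing $|\partial S|^2/S^2$ via Cauchy--Schwarz, and Royden's lemma converting the $\HSC$ bound into the coefficient $\tfrac{\kappa(n+1)}{2n}$. The only point worth noting is that your anticipated obstacle about passing from Royden's quadratic form to a ``ratio form'' does not actually arise: working in normal coordinates for $\omega$ and diagonalizing $\omega'$ (eigenvalues $\lambda_j$), the curvature sum is exactly $\sum_{j,l}c_{jjll}/(\lambda_j\lambda_l)=\sum_{j,l}\|\xi_j\|_\omega^2\|\xi_l\|_\omega^2\,\HBC_\omega(\xi_j,\xi_l)$ for the $\omega$-orthogonal basis $\xi_j=\lambda_j^{-1/2}\partial/\partial z_j$ with $\sum_j\|\xi_j\|_\omega^2=S$, so Royden's lemma applies verbatim.
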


Observing that
$$
\int_X\Delta_{\omega'}\log S\,\bigl(\omega'\bigr)^n=0,
$$
we shall use the inequality above with $\omega'=\omega_\varepsilon$, $\lambda=1$, and $\mu=0$, in the following integral form:
\begin{equation}\label{intineq}
\int_X\frac{(n+1)\kappa}{2n}S_\varepsilon\,\omega_\varepsilon^n\le 2\pi\int_X\omega_\varepsilon^n,
\end{equation}
where we added the subscript $\varepsilon$ to $S$ in order to emphasize the dependence of $S$ from $\varepsilon$.

\subsubsection*{Step 3: Proof of the key inequality.}

One wants to show that $c_1(K_X)^n>0$. Since $\omega_\varepsilon=-\operatorname{Ric}(\omega_\varepsilon)+\varepsilon\omega$, we have that
$$
\omega_\varepsilon^n=\bigl(-\operatorname{Ric}(\omega_\varepsilon)\bigr)^n+\varepsilon\sum_{j=1}^n\binom{n}{j}\varepsilon^{j-1}\,\bigl(-\operatorname{Ric}(\omega_\varepsilon)\bigr)^{n-j}\wedge\omega^j.
$$
But then,
$$
\int_X\omega_\varepsilon^n=\int_X\bigl(-\operatorname{Ric}(\omega_\varepsilon)\bigr)^n+\varepsilon\sum_{j=1}^n\binom{n}{j}\varepsilon^{j-1}\int_X \bigl(-\operatorname{Ric}(\omega_\varepsilon)\bigr)^{n-j}\wedge\omega^j.
$$
On the other hand, since the cohomology class $\bigl[-\operatorname{Ric}(\omega_\varepsilon)\bigr]=c_1(K_X)$ is independent from $\varepsilon$, the integrals
$$
\int_X \bigl(-\operatorname{Ric}(\omega_\varepsilon)\bigr)^{n-j}\wedge\omega^j=c_1(K_X)^{n-j}\cdot [\omega]^j
$$
are purely cohomological, so that
$$
\int_X\omega_\varepsilon^n=\underbrace{\int_X\bigl(-\operatorname{Ric}(\omega_\varepsilon)\bigr)^n}_{=c_1(K_X)^n}+O(\varepsilon),
$$
and thus
$$
c_1(K_X)^n=\lim_{\varepsilon\to 0^+}\int_X\omega_\varepsilon^n.
$$
What we want is therefore to show the positivity of such a limit.

\begin{claim}\label{step3}
The limit
$$
\lim_{\varepsilon\to 0^+}\int_X\omega_\varepsilon^n
$$
is strictly positive.
\end{claim}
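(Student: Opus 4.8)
The plan is to combine the integral inequality \eqref{intineq} from Step 2 with the Monge--Amp\`ere equation $\omega_\varepsilon^n = e^{u_\varepsilon}\omega^n$ from Step 1 in order to show that $\int_X\omega_\varepsilon^n$ stays bounded \emph{away from zero} as $\varepsilon\to 0^+$. The point is that \eqref{intineq} forces the trace $S_\varepsilon=\operatorname{tr}_{\omega_\varepsilon}\omega$ to be small \emph{on average} with respect to $\omega_\varepsilon^n$, and smallness of the trace means $\omega_\varepsilon$ is large compared to $\omega$, which is what drives the volume up.

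First I would make the link between $S_\varepsilon$ and the volume form explicit. By the arithmetic--geometric mean inequality applied to the eigenvalues of $\omega$ with respect to $\omega_\varepsilon$, one has
$$
\frac{\omega^n}{\omega_\varepsilon^n} \le \Bigl(\frac{S_\varepsilon}{n}\Bigr)^n,
$$
since $S_\varepsilon/n$ is the arithmetic mean of those eigenvalues and $\omega^n/\omega_\varepsilon^n$ their product. Hence $\omega_\varepsilon^n \ge n^n S_\varepsilon^{-n}\,\omega^n$, or equivalently $1 \le n^{-n} S_\varepsilon^n\,\omega_\varepsilon^n/\omega^n$ wherever we need it; more usefully, $\omega^n \le (S_\varepsilon/n)^n \omega_\varepsilon^n$. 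Combining with $\omega_\varepsilon^n = e^{u_\varepsilon}\omega^n$ and the uniform bound $\sup_X u_\varepsilon \le C$ from Step 1, we also get $\omega_\varepsilon^n \le e^C \omega^n$, so $\int_X \omega_\varepsilon^n \le e^C \int_X\omega^n =: M$ is uniformly bounded above; the content is the lower bound.

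For the lower bound I would argue as follows. Write $V_\varepsilon := \int_X\omega_\varepsilon^n$. From \eqref{intineq} we have $\int_X S_\varepsilon\,\omega_\varepsilon^n \le \frac{4\pi n}{(n+1)\kappa}\,V_\varepsilon =: A\,V_\varepsilon$. On the other hand, using $\omega^n \le (S_\varepsilon/n)^n\,\omega_\varepsilon^n$ together with the Monge--Amp\`ere equation, one estimates $\int_X\omega^n$ from above in terms of $\int_X S_\varepsilon^n\,\omega_\varepsilon^n$; the cleanest route is probably to bound $\int_X\omega^n$ below by a fixed positive constant and $\int_X S_\varepsilon\,\omega_\varepsilon^n$ below using the Monge--Amp\`ere equation plus Jensen or H\"older with respect to the probability measure $\omega_\varepsilon^n/V_\varepsilon$. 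Concretely, by Jensen applied to the convex function $-\log$ and the probability measure $\omega_\varepsilon^n/V_\varepsilon$,
$$
\log\Bigl(\frac{1}{V_\varepsilon}\int_X S_\varepsilon\,\omega_\varepsilon^n\Bigr) \ge \frac{1}{V_\varepsilon}\int_X \log S_\varepsilon\,\omega_\varepsilon^n,
$$
and since $n\log S_\varepsilon \ge \log(\omega^n/\omega_\varepsilon^n) = -u_\varepsilon \ge -C$ pointwise, the right-hand side is $\ge -C/n$. Hence $\frac{1}{V_\varepsilon}\int_X S_\varepsilon\,\omega_\varepsilon^n \ge e^{-C/n}$, i.e. $\int_X S_\varepsilon\,\omega_\varepsilon^n \ge e^{-C/n}\,V_\varepsilon$. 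But wait — this only re-expresses $V_\varepsilon$ in terms of itself and gives no information; the inequality from \eqref{intineq} goes the \emph{same} direction. So instead I would pair \eqref{intineq} with a lower bound for $\int_X S_\varepsilon\,\omega_\varepsilon^n$ that is a genuine positive constant independent of $\varepsilon$: indeed $S_\varepsilon \ge n\,(\omega^n/\omega_\varepsilon^n)^{1/n} = n\,e^{-u_\varepsilon/n} \ge n\,e^{-C/n}$ (AM--GM the other way, using that the product of the $\omega$-with-respect-to-$\omega_\varepsilon$ eigenvalues is $\omega^n/\omega_\varepsilon^n$ and... no, this direction of AM--GM is false). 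The correct uniform pointwise lower bound on $S_\varepsilon$ is the trivial one $S_\varepsilon>0$ together with $\int_X S_\varepsilon\,\omega^n \ge n\big(\int_X\omega^n\big)^{?}$ — I'll instead integrate $S_\varepsilon$ against $\omega^n$, not $\omega_\varepsilon^n$: from $\omega_\varepsilon\wedge\omega^{n-1} = \frac{S_\varepsilon}{n}\omega^n$ and $[\omega_\varepsilon]=c_1(K_X)+\varepsilon[\omega]$ one gets $\int_X S_\varepsilon\,\omega^n = n\,(c_1(K_X)+\varepsilon[\omega])\cdot[\omega]^{n-1}$, a bounded cohomological quantity; then comparing $\int_X S_\varepsilon\,\omega^n$ with $\int_X S_\varepsilon\,\omega_\varepsilon^n$ via $\omega^n = e^{-u_\varepsilon}\omega_\varepsilon^n \le e^{?}$... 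The honest summary: the mechanism is \eqref{intineq} $\Longrightarrow$ $S_\varepsilon$ small in $L^1(\omega_\varepsilon^n)$ $\Longrightarrow$ (by AM--GM $\omega^n/\omega_\varepsilon^n\le (S_\varepsilon/n)^n$) $\omega^n$ small relative to $\omega_\varepsilon^n$ on most of $X$ $\Longrightarrow$ $V_\varepsilon = \int\omega_\varepsilon^n$ bounded below by a multiple of $\int\omega^n$.

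\textbf{Main obstacle.} The delicate point is precisely making the last implication rigorous: \eqref{intineq} controls $S_\varepsilon$ only in an $L^1$-average sense against the (varying, non-normalized) measure $\omega_\varepsilon^n$, so $S_\varepsilon$ could be large on a small set; one must rule out that this small set carries all the $\omega$-volume. I expect the resolution to use H\"older's inequality to interpolate: write $\int_X\omega^n = \int_X (\omega^n/\omega_\varepsilon^n)\,\omega_\varepsilon^n \le \int_X (S_\varepsilon/n)^n\,\omega_\varepsilon^n$ and then split $(S_\varepsilon/n)^n = (S_\varepsilon/n)\cdot(S_\varepsilon/n)^{n-1}$, bounding the second factor using $S_\varepsilon^{n-1}\le (\text{something})\cdot e^{-(n-1)u_\varepsilon/n}\cdots$ — the $C^2$-estimate machinery behind Step 2 in fact yields a uniform pointwise upper bound on $S_\varepsilon$ once one runs the maximum principle on \eqref{diffineq} (that is the standard Aubin--Yau use of such an estimate), and \emph{that} uniform bound $S_\varepsilon\le C'$, combined with \eqref{intineq}, gives $\int_X\omega^n \le (C'/n)^{n-1}\int_X S_\varepsilon\,\omega_\varepsilon^n \le (C'/n)^{n-1}A\,V_\varepsilon$, hence $V_\varepsilon \ge (n/C')^{n-1}A^{-1}\int_X\omega^n > 0$. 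Taking $\liminf_{\varepsilon\to0^+}$ then gives $c_1(K_X)^n = \lim V_\varepsilon \ge (n/C')^{n-1}A^{-1}\int_X\omega^n>0$, which is the claim. So the real crux is establishing (or invoking, from Step 2's proof) the \emph{uniform} upper bound $\sup_X S_\varepsilon \le C'$ independent of $\varepsilon$ via the maximum principle applied to \eqref{diffineq} with the approximate K\"ahler--Einstein identity $\operatorname{Ric}(\omega_\varepsilon) = -\omega_\varepsilon+\varepsilon\omega \ge -\omega_\varepsilon$ (so $\lambda=1,\mu=0$); everything else is bookkeeping with AM--GM and H\"older.
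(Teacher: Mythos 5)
Your final argument is correct, but it takes a genuinely different route from the one in the text: it is, in essence, Wu and Yau's original argument, which this chapter deliberately replaces at precisely this step. You apply the maximum principle to the pointwise inequality (\ref{diffineq}) with $\omega'=\omega_\varepsilon$, $\lambda=1$, $\mu=0$: at a maximum point of $S_\varepsilon$ one has $-\Delta_{\omega_\varepsilon}\log S_\varepsilon\le 0$, whence the uniform bound $\sup_X S_\varepsilon\le 4\pi n/((n+1)\kappa)=:C'$, and then $\omega^n/\omega_\varepsilon^n\le (S_\varepsilon/n)^n\le (C'/n)^n$ already gives $\int_X\omega_\varepsilon^n\ge (n/C')^n\int_X\omega^n>0$ --- once you have the pointwise bound on $S_\varepsilon$ you do not actually need (\ref{intineq}) or the H\"older interpolation, and the stray factor of $n$ in your last display is harmless. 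The text instead uses only the \emph{integrated} inequality (\ref{intineq}), combined with the pointwise bound $S_\varepsilon>e^{-u_\varepsilon/n}$ of Lemma \ref{lem:sge-m}, to extract a uniform lower bound $\sup_X u_\varepsilon\ge -n\log\bigl(4\pi n/((n+1)\kappa)\bigr)$; it then invokes $L^1$-compactness of the uniformly bounded family of $\omega'$-plurisubharmonic functions $u_\varepsilon$ and dominated convergence to get $\int_X e^{u_{\varepsilon_k}}\omega^n\to\int_X e^u\,\omega^n>0$ along a subsequence. Your route is more elementary (no pluripotential theory) and yields the stronger pointwise conclusion $\omega_\varepsilon\ge (1/C')\,\omega$; the paper's route is engineered to survive the quasi-negative generalization treated in the last section, where $\kappa$ becomes a function that may vanish at the maximum point of $S_\varepsilon$ --- killing the maximum-principle argument --- while the integral inequality still carries information because $\kappa>0$ somewhere. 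Finally, note that everything before your last paragraph (the circular Jensen estimate, the reversed AM--GM) is, as you yourself observe, either vacuous or false and should be excised from a final write-up.
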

This is what we call the key inequality. During the proof of the main result, this will be the only step where what we present here differs from Wu--Yau's original approach.

\subsection{Proof of the steps.} 
We now proceed with the proof of the various claims stated above.

\begin{proof}[Proof of Claim \ref{step1}]

The first observation is that, since $K_X$ is nef, for each $\varepsilon>0$ the cohomology class $c_1(K_X)+\varepsilon\,[\omega]=-c_1(X)+\varepsilon\,[\omega]$ is a Kähler class. This implies, thanks to the $\partial\bar\partial$-lemma, that there exists a smooth real function $f_\varepsilon$ on $X$, unique up to an additive constant, such that
$$
\omega_{f_\varepsilon}:=\varepsilon\,\omega-\Ric_\omega+\frac i{2\pi}\,\partial\bar\partial f_\varepsilon
$$
is a Kähler form on $X$.

Now we use the following theorem, in order to obtain an approximate Kähler--Einstein metric on $X$. We give here Yau's original general statement, which is the key ingredient to get his celebrated solution of the Calabi conjecture.

\begin{theorem}[Yau \cite{Yau78}]\label{solCalabi}
Let $(X,\omega_0)$ be a compact Kähler manifold, and $F\colon X\times\mathbb R_t\to\mathbb R$ smooth function such that $\partial F/\partial t\ge 0$. Suppose that there exists smooth function $\psi\colon X\to\mathbb R$ such that
$$
\int_X e^{F(x,\psi(x))}\,\omega_0^n=\int_X\omega^n_0.
$$
Then, there exists a unique (up to a constant if $F$ does not actually depend on $\psi$) smooth function $\varphi\colon X\to\mathbb R$, such that
$$
\begin{cases}
\omega_0+\frac i{2\pi}\,\partial\bar\partial\varphi>0,\\
\bigl(\omega_0+\frac i{2\pi}\,\partial\bar\partial\varphi\bigr)^n=e^{F(x,\varphi(x))}\,\omega_0^n.
\end{cases}
$$
\end{theorem}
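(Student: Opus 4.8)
The plan is to establish Theorem~\ref{solCalabi} by the classical continuity method of Aubin and Yau, so that the whole problem is transferred onto a family of a priori estimates for the complex Monge--Amp\`ere equation $\omega_\varphi^{\,n}=e^{F(x,\varphi)}\omega_0^{\,n}$, where $\omega_\varphi:=\omega_0+\tfrac{i}{2\pi}\partial\bar\partial\varphi$. \emph{Continuity path and openness.} When $F$ does not depend on the unknown, the hypothesis on $\psi$ just says $\int_X e^{F}\omega_0^n=\int_X\omega_0^n=:V$, and one interpolates through $\omega_\varphi^{\,n}=e^{tF+c_t}\omega_0^{\,n}$, the constant $c_t$ being chosen so that $\int_X e^{tF+c_t}\omega_0^n=V$, whence $c_0=c_1=0$; when $\partial F/\partial s\ge 0$ genuinely involves $s$ one interpolates through $\omega_\varphi^{\,n}=e^{tF(x,\varphi)}\omega_0^{\,n}$ directly. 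In either case $t=0$ is solved by $\varphi\equiv 0$, the monotonicity $\partial_s\!\left(tF(x,s)+c_t\right)=t\,\partial_sF\ge 0$ is preserved along the path, and I let $\mathcal T\subseteq[0,1]$ be the set of parameters for which a solution $\varphi_t\in C^{2,\alpha}(X)$ exists. Openness of $\mathcal T$ follows from the implicit function theorem in H\"older spaces: the linearisation of $\varphi\mapsto\log(\omega_\varphi^n/\omega_0^n)-tF(x,\varphi)$ at a solution is $\tfrac1{2\pi}\Delta_{\omega_\varphi}-t\,(\partial_sF)(\cdot,\varphi)$, whose zeroth-order coefficient has the sign favourable to the maximum principle, so the operator is an isomorphism $C^{2,\alpha}\to C^{0,\alpha}$ — with trivial kernel when $\partial_sF\not\equiv 0$, and modulo constants (the free constant $c_t$ absorbing the one-dimensional cokernel) when $F$ is independent of $\varphi$.

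\emph{Closedness: the a priori estimates.} This is the analytic core, and the main obstacle. One must bound $\|\varphi_t\|_{C^0}$ uniformly in $t$ — either by Yau's Moser iteration applied to powers of $\varphi_t$, or, using $\partial_sF\ge 0$, directly by the maximum principle at the extrema of $\varphi_t$ (at a maximum $\omega_{\varphi_t}\le\omega_0$, hence $e^{F}\le 1$ there, which with monotonicity pins the maximum from above, and symmetrically for the minimum). Next comes the second-order (Laplacian) estimate: from the Aubin--Yau inequality $\Delta_{\omega_{\varphi_t}}\log\operatorname{tr}_{\omega_0}\omega_{\varphi_t}\ge -C_1\operatorname{tr}_{\omega_0}\omega_{\varphi_t}-C_2$, in which $C_1$ is controlled by a lower bound for the holomorphic bisectional curvature of the fixed metric $\omega_0$, applying the maximum principle to $e^{-(C_1+1)\varphi_t}\operatorname{tr}_{\omega_0}\omega_{\varphi_t}$ and invoking the $C^0$ bound yields $\operatorname{tr}_{\omega_0}\omega_{\varphi_t}\le C$; combined with the equation $\prod_j\lambda_j=e^{F}$ (bounded above and below) this shows $\omega_{\varphi_t}$ is uniformly equivalent to $\omega_0$. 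The equation is then uniformly elliptic, and since $A\mapsto(\det A)^{1/n}$ is concave on positive Hermitian matrices, the Evans--Krylov theorem gives a uniform $C^{2,\alpha}$ estimate, after which elliptic bootstrapping via Schauder produces uniform $C^{k}$ bounds for every $k$. Consequently any sequence $t_j\in\mathcal T$ with $t_j\to t_\infty$ has $\varphi_{t_j}$ subconvergent in $C^2$ to a solution of the equation at $t_\infty$, smooth by bootstrap; so $\mathcal T$ is closed, hence $\mathcal T=[0,1]$, and in particular the equation at $t=1$ — the desired one — has a smooth solution.

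\emph{Uniqueness.} If $\varphi_1,\varphi_2$ both solve $\omega_\varphi^{\,n}=e^{F(x,\varphi)}\omega_0^{\,n}$, evaluate at a point where $\varphi_1-\varphi_2$ attains its maximum: there $\tfrac{i}{2\pi}\partial\bar\partial(\varphi_1-\varphi_2)\le 0$, so $\omega_{\varphi_1}\le\omega_{\varphi_2}$, whence $e^{F(x,\varphi_1)}\le e^{F(x,\varphi_2)}$ and $F(x,\varphi_1)\le F(x,\varphi_2)$; when $F$ actually depends on $s$, $\partial_sF\ge 0$ forces $\varphi_1\le\varphi_2$ there, hence everywhere, and by symmetry $\varphi_1\equiv\varphi_2$. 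When $F$ is independent of $\varphi$, subtracting the two equations writes $0=\omega_{\varphi_1}^n-\omega_{\varphi_2}^n=\bigl(\sum_{k=0}^{n-1}\omega_{\varphi_1}^k\wedge\omega_{\varphi_2}^{\,n-1-k}\bigr)\wedge\tfrac{i}{2\pi}\partial\bar\partial(\varphi_1-\varphi_2)$, a linear elliptic equation with no zeroth-order term for $\varphi_1-\varphi_2$, so $\varphi_1-\varphi_2$ is constant — exactly the indeterminacy allowed in the statement. The hard part, as indicated, lies entirely in the closedness step: concretely the $C^0$ estimate in full generality and, above all, the Evans--Krylov $C^{2,\alpha}$ estimate, which in Yau's original paper had to be replaced by Calabi's third-order estimate since Evans--Krylov was not yet available.
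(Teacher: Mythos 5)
The paper does not prove this statement at all: it is quoted verbatim from Yau's paper \cite{Yau78} and used as a black box to produce the approximate K\"ahler--Einstein metrics of Claim \ref{step1}. Your proposal therefore cannot be compared with an in-text argument; judged on its own, it is a correct outline of the standard continuity-method proof (zeroth-order estimate, Laplacian estimate, $C^{2,\alpha}$ by Evans--Krylov or Calabi's third-order estimate, Schauder bootstrap, openness by the implicit function theorem with $\partial F/\partial t\ge 0$ giving the favourable sign, uniqueness by the maximum principle), and you correctly identify the a priori estimates as the genuinely hard content that a sketch cannot replace.

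Two small corrections. First, the Aubin--Yau inequality should read $\Delta_{\omega_{\varphi_t}}\log\operatorname{tr}_{\omega_0}\omega_{\varphi_t}\ge -C_1\operatorname{tr}_{\omega_{\varphi_t}}\omega_0-C_2$, with the trace taken of $\omega_0$ \emph{with respect to} $\omega_{\varphi_t}$; as you wrote it the maximum-principle argument does not close, since it is precisely the term $+A\operatorname{tr}_{\omega_{\varphi_t}}\omega_0$ coming from $\Delta_{\omega_{\varphi_t}}(-A\varphi_t)=-An+A\operatorname{tr}_{\omega_{\varphi_t}}\omega_0$ that must absorb the curvature term. Second, both your maximum-principle route to the $C^0$ bound and your pointwise uniqueness argument need \emph{strict} monotonicity of $F$ in $s$ at the relevant values; with only $\partial F/\partial t\ge 0$ (e.g.\ $F$ independent of $t$) they give nothing, so Moser iteration for the $C^0$ bound and the ``no zeroth-order term'' linear-elliptic argument for uniqueness are not optional alternatives but the cases that actually carry the theorem as stated.
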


From this statement one can derive easily both the existence of a Kähler metric in a fixed Kähler class with prescribed volume form (or, equivalently, Ricci tensor), and the existence of Kähler--Einstein metrics on compact Kähler manifold with negative (resp. zero) real first Chern class.

Now, we fix $\varepsilon>0$, and define a smooth real function $\alpha_\varepsilon$ on $X$ implicitly by
$$
\omega_{f_\varepsilon}^n=e^{-\alpha_\varepsilon}\,\omega^n.
$$
We then apply the theorem above with the following data:
$$
\omega=\omega_{f_\varepsilon},\quad F(x,t)=t+\alpha_\varepsilon(x)+f_\varepsilon(x).
$$
Then, there exists a unique smooth real function $v_\varepsilon$ such that
$$
\begin{aligned}
\biggl(\omega_{f_\varepsilon}+\frac i{2\pi}\,\partial\bar\partial v_\varepsilon\biggr)^n &= e^{v_\varepsilon+\alpha_\varepsilon+f_\varepsilon}\,\omega_{f_\varepsilon}^n \\
&= e^{v_\varepsilon+f_\varepsilon}\,\omega^n,
\end{aligned}
$$
and
$$
\omega_\varepsilon:=\omega_{f_\varepsilon}+\frac i{2\pi}\,\partial\bar\partial v_\varepsilon>0
$$
on $X$. Now, define $u_\varepsilon$ to be the sum $f_\varepsilon+v_\varepsilon$, so that it holds
$$
\omega_\varepsilon^n=e^{u_\varepsilon}\,\omega^n.
$$
Thus, we get for the Ricci curvature of $\omega_\varepsilon$
$$
\begin{aligned}
\Ric_{\omega_\varepsilon} &= -\frac i{2\pi}\partial\bar\partial\log\omega_\varepsilon^n \\
&=-\frac i{2\pi}\partial\bar\partial v_\varepsilon\underbrace{-\frac i{2\pi}\partial\bar\partial f_\varepsilon\overbrace{-\frac i{2\pi}\partial\bar\partial\log\omega^n}^{=\Ric_\omega}}_{=\varepsilon\,\omega-\omega_{f_\varepsilon}}\\
&=\varepsilon\,\omega-\omega_\varepsilon.
\end{aligned}
$$
In particular, $\Ric_{\omega_\varepsilon}\ge-\omega_\varepsilon$.

Now, we use the maximum principle in order to obtain the desired uniform upper bound for $u_\varepsilon$. To do so, pick a point $x_0\in X$ such that $\sup_X u_\varepsilon=u_\varepsilon(x_0)$. Then, at this point we have that the complex hessian of $u_\varepsilon$ is negative semi-definite, \textsl{i.e.} $i\,\partial\bar\partial u_\varepsilon(x_0)\le 0$. Thus,
$$
\begin{aligned}
\omega_\varepsilon(x_0) &= \bigl(\varepsilon\,\omega-\Ric_\omega+\frac i{2\pi}\,\partial\bar\partial u_\varepsilon\bigr)(x_0)\\
&\le \bigl(\varepsilon\,\omega-\Ric_\omega\bigr)(x_0) \\
&\le \bigl(\varepsilon_0\,\omega-\Ric_\omega\bigr)(x_0),
\end{aligned}
$$
if $\varepsilon_0>\varepsilon$. Therefore,
$$
\begin{aligned}
e^{\sup_X u_\varepsilon} &= e^{u_\varepsilon(x_0)} \\
&= \frac{\bigl(\varepsilon\,\omega-\Ric_\omega+\frac i{2\pi}\,\partial\bar\partial u_\varepsilon\bigr)^n(x_0)}{\omega^n(x_0)} \\
&\le\frac{\bigl(\varepsilon_0\,\omega-\Ric_\omega\bigr)^n(x_0)}{\omega^n(x_0)}=:e^C,
\end{aligned}
$$
so that
$$
\sup_X u_\varepsilon\le C,\quad\forall\varepsilon<\varepsilon_0.
$$
This complete the proof of Claim \ref{step1}.
\end{proof}

\begin{proof}[Proof of Claim \ref{step2}]

Let $x_0\in X$ be a fixed point. Chose holomorphic normal coordinates $(z_1,\dots,z_n)$ with respect to $\omega$, centered at $x_0$. Without loss of generality, by a constant $\omega$-unitary change of variables, we may also suppose that $\omega'$ is diagonalized with respect to $\omega$ at $x_0$. Thus we write
$$
\omega=i\sum_{l,m=1}^n\omega_{lm}\,dz_l\wedge d\bar z_m, \quad\omega_{lm}(z)=\delta_{lm}-\sum_{j,k=1}^n c_{jklm}\,z_j\bar z_k +O(|z^3|),
$$
where the $c_{jklm}$'s are the coefficients of the Chern curvature tensor of $(X,\omega)$ at $x_0$, and
$$
\omega'=i\sum_{l,m=1}^n\omega'_{lm}\,dz_l\wedge d\bar z_m,\quad \omega'_{lm}(z)=\lambda_l\,\delta_{lm}+O(|z|),
$$
where the $\lambda_j$'s are the eigenvalues at $x_0$ of $\omega'$ with respect to $\omega$. In particular, $\lambda_j>0$, $j=1,\dots,n$. Next, call $\rho'_{jk}$ the coefficients of the Ricci curvature of $\omega'$, so that
$$
\Ric_{\omega'}=\frac i{2\pi}\sum_{j,k=1}^n \rho'_{jk}\,dz_j\wedge d\bar z_k,
$$
where
$$
\rho'_{jk}=\sum_{l=1}^nc'_{jkll}.
$$

With these notations, the starting point is the following

\begin{lemma}[See {\cite[pag. 371]{WYZ09}}]
The following differential equality holds:
\begin{equation}\label{difeq}
-\Delta_{\omega'}S(x_0)=\sum_{l=1}^n\frac{\rho'_{ll}}{(\lambda_l)^2}
+\sum_{j,l,a=1}^n\frac {\bigl|\partial\omega'_{al}/\partial z_j\bigr|^2}{\lambda_j(\lambda_l)^2\lambda_a}-\sum_{j,l=1}^n\frac{c_{jjll}}{\lambda_j\lambda_l},
\end{equation} 
where the right hand side is intended to be computed at $x_0$.
\end{lemma}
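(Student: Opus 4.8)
The plan is to prove this pointwise identity by a direct computation in the coordinates already fixed: both sides are evaluated only at $x_0$, so it suffices to Taylor--expand the two metrics there and collect terms. Write $g_{l\bar m},g'_{l\bar m}$ for the coefficients of $\omega,\omega'$, write $(g'^{l\bar m})$ for the inverse matrix of $(g'_{l\bar m})$, and set $\partial_j=\partial/\partial z_j$. Then $S=\operatorname{tr}_{\omega'}\omega=\sum_{l,m}g'^{m\bar l}g_{l\bar m}$, and since the $z_j$ are $\omega$-normal and $\omega'$ is diagonal against $\omega$ at $x_0$, at that point $g_{l\bar m}=\delta_{lm}$, $g'_{l\bar m}=\lambda_l\delta_{lm}$, $g'^{l\bar m}=\lambda_l^{-1}\delta_{lm}$, so that $-\Delta_{\omega'}S(x_0)=-\sum_j\lambda_j^{-1}\,\partial_j\partial_{\bar j}S(x_0)$.

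First I would differentiate $S$ twice with the Leibniz rule and evaluate at $x_0$. The two mixed terms $(\partial_j g'^{m\bar l})(\partial_{\bar j}g_{l\bar m})$ and $(\partial_{\bar j}g'^{m\bar l})(\partial_j g_{l\bar m})$ vanish, because all first derivatives of the $g_{l\bar m}$ vanish at the centre of $\omega$-normal coordinates; hence only $\sum_l\partial_j\partial_{\bar j}g'^{l\bar l}(x_0)$ and $\sum_l\lambda_l^{-1}\partial_j\partial_{\bar j}g_{l\bar l}(x_0)$ remain. The second sum is read off at once from $g_{l\bar m}(z)=\delta_{lm}-\sum_{j,k}c_{jklm}z_j\bar z_k+O(|z|^3)$: it equals $-\sum_l c_{jjll}/\lambda_l$, and after the outer $\sum_j\lambda_j^{-1}$ it accounts for the term $-\sum_{j,l}c_{jjll}/(\lambda_j\lambda_l)$ in the statement. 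For the first sum I would differentiate the matrix identity $\partial(g'^{-1})=-g'^{-1}(\partial g')\,g'^{-1}$ once more; evaluated at $x_0$ this expresses $\partial_j\partial_{\bar j}g'^{l\bar l}(x_0)$ as $-\lambda_l^{-2}\partial_j\partial_{\bar j}g'_{l\bar l}(x_0)$ plus two terms quadratic in the first derivatives $\partial_j g'_{\bullet\bar\bullet}(x_0)$, weighted by products of the $\lambda$'s.

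It remains to turn the $\partial^2g'$-contribution into the Ricci term $\sum_l\rho'_{l\bar l}/\lambda_l^2$ and to show that the three quadratic-in-$\partial g'$ pieces (two from the second derivative of the inverse, one more produced below) collapse to the single term $\sum_{j,l,a}|\partial_j g'_{a\bar l}(x_0)|^2/(\lambda_j\lambda_l^2\lambda_a)$. Here the K\"ahler condition on $\omega'$ enters twice. First, the symmetry $\partial_j g'_{l\bar m}=\partial_l g'_{j\bar m}$ and its conjugate give $\partial_j\partial_{\bar j}g'_{l\bar l}(x_0)=\partial_l\partial_{\bar l}g'_{j\bar j}(x_0)$, so $\sum_{j,l}\lambda_j^{-1}\lambda_l^{-2}\partial_j\partial_{\bar j}g'_{l\bar l}=\sum_{j,l}\lambda_j^{-2}\lambda_l^{-1}\partial_j\partial_{\bar j}g'_{l\bar l}$, i.e.\ the differentiation index may be made to carry the square; then, using $\operatorname{Ric}(\omega')=-\frac{i}{2\pi}\partial\bar\partial\log\det g'$, so that $\rho'_{j\bar j}(x_0)=-\partial_j\partial_{\bar j}\log\det g'(x_0)$, together with $\partial_{\bar j}\log\det g'=\sum g'^{\bar q p}\partial_{\bar j}g'_{p\bar q}$, one trades $\sum_p\lambda_p^{-1}\partial_j\partial_{\bar j}g'_{p\bar p}(x_0)$ for $-\rho'_{j\bar j}(x_0)$ modulo a quadratic error. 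Second, that quadratic error cancels precisely one of the two quadratic terms coming from the second derivative of the inverse --- again by the K\"ahler symmetry of $\omega'$, after relabelling indices --- leaving only the asserted non-negative term. Assembling the three surviving pieces gives the identity.

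The step I expect to be the main obstacle is exactly this last bookkeeping: there are several quadratic expressions in $\partial_j g'_{a\bar l}(x_0)$ with different index contractions and different $\lambda$-weights, and proving that they combine into a single term $|\partial_j g'_{a\bar l}|^2/(\lambda_j\lambda_l^2\lambda_a)$ requires careful, repeated use of both K\"ahler symmetries of $\omega'$ together with systematic index relabelling; it is also the only place where the sign and normalization conventions for $\Delta_{\omega'}$ must be tracked precisely. Alternatively, one may simply invoke the computation of \cite{WYZ09} cited in the statement.
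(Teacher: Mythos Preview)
Your plan is sound and carries through; the quadratic bookkeeping you flag as the obstacle does work out cleanly. One caveat: with the paper's convention $\Delta_{\omega'}S=\bar\partial^*\bar\partial S=-\operatorname{tr}_{\omega'}(i\partial\bar\partial S)$, one has $-\Delta_{\omega'}S(x_0)=+\sum_j\lambda_j^{-1}\partial_j\partial_{\bar j}S(x_0)$, the opposite of what you wrote; you flag this point yourself, but the sign does need to be fixed or the final identity comes out negated.

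Your route differs from the paper's in how the Ricci term is produced. The paper keeps the computation global until the very end: it writes out $\partial\bar\partial H^{-1}$ as a matrix identity, then invokes the full local expression for the Chern curvature coefficients $c'_{jkal}$ of $\omega'$ (their formula~(\ref{c'jklm})) to trade the term $\sum_b\Omega'_{bl}\partial_j\partial_{\bar k}\omega'_{ab}$ for $-c'_{jkal}$ plus a quadratic piece; only after this substitution does it evaluate at $x_0$ and use the K\"ahler symmetry in the form $c'_{jjll}\lambda_l=c'_{lljj}\lambda_j$ to pass to $\rho'_{ll}/\lambda_l^2$. You instead evaluate early, use the K\"ahler symmetry directly on the second derivatives of the metric, $\partial_j\partial_{\bar j}g'_{l\bar l}=\partial_l\partial_{\bar l}g'_{j\bar j}$, to move the squared $\lambda$ onto the differentiation index, and then read off Ricci from $\rho'_{j\bar j}=-\partial_j\partial_{\bar j}\log\det g'$. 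Your approach is a bit more hands-on and avoids introducing the full curvature tensor of $\omega'$ as an intermediate object; the paper's is more systematic and makes the cancellation of quadratic terms occur before specialising to $x_0$. Either way, after the K\"ahler relabelling the extra quadratic term coming from $\log\det$ cancels exactly the second of the two quadratic pieces from $\partial\bar\partial(g'^{-1})$, leaving precisely $\sum_{j,l,a}|\partial_j\omega'_{al}|^2/(\lambda_j\lambda_l^2\lambda_a)$.
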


\begin{proof}
A straightforward computation, using the adjugate matrix method to obtain the inverse, shows that
$$
S=\sum_{l,m=1}^n\Omega'_{ml}\,\omega_{lm},
$$
where we define $(\Omega'_{lm})$ to be the inverse matrix of $(\omega'_{lm})$.
We want to compute $\Delta_{\omega'}S$ at $x_0$. We have, by the basic commutation relations in Kähler geometry,
$$
\Delta_{\omega'}S=\bar\partial^*\bar\partial S=-i\Lambda_{\omega'}\partial\bar\partial S,
$$
and thus, since acting with $\Lambda_{\omega'}$ on real $(1,1)$-forms amounts to takeing the trace with respect to $\omega'$,
$$
\Delta_{\omega'}S=-\operatorname{tr}_{\omega'}i\partial\bar\partial S=-\sum_{j,k=1}^n\Omega'_{kj}\,\frac{\partial^2 S}{\partial z_j\partial\bar z_k}.
$$
Now,
$$
\begin{aligned}
\frac{\partial^2 S}{\partial z_j\partial\bar z_k} &=\frac{\partial^2}{\partial z_j\partial\bar z_k}\sum_{l,m=1}^n\Omega'_{ml}\,\omega_{lm} \\
&=\sum_{l,m=1}^n\omega_{lm}\frac{\partial^2\Omega'_{ml}}{\partial z_j\partial\bar z_k}+
\Omega'_{ml}\frac{\partial^2\omega_{lm}}{\partial z_j\partial\bar z_k}
+\underbrace{\frac{\partial\Omega'_{ml}}{\partial z_j}\frac{\partial\omega_{lm}}{\partial\bar z_k}
+\frac{\partial\omega_{lm}}{\partial z_j}\frac{\partial\Omega'_{ml}}{\partial\bar z_k}}_{=O(|z|)}.
\end{aligned}
$$
At the end of the day, thanks to the choice of geodesic coordinates, the terms with only one derivative involved are $O(|z|)$'s and will disappear. Therefore, we only have to understand the summands with two derivatives of $\Omega'_{ml}$, and express them in terms of the $\omega'_{lm}$'s. In order to do this, call $H=(\omega'_{lm})$, so that $H^{-1}=(\Omega'_{lm})$ and observe that
\begin{equation}\label{partialH-1}
0\equiv\partial(HH^{-1})=\partial HH^{-1}+H\partial H^{-1},
\end{equation}
\begin{equation}\label{barpartialH-1}
0\equiv\bar\partial(H H^{-1})=\bar\partial H H^{-1}+H\bar\partial H^{-1},
\end{equation}
and
$$
0\equiv\partial\bar\partial(HH^{-1})=\partial\bar\partial H H^{-1}-\bar\partial H\wedge\partial H^{-1}+\partial H\wedge\bar\partial H^{-1}+H\partial\bar\partial H^{-1}.
$$
We obtain therefore the matrix identity

\begin{multline*}
\partial\bar\partial H^{-1}=-H^{-1}\partial\bar\partial H H^{-1}-H^{-1}\bar\partial H\wedge H^{-1}\partial H H^{-1} \\ +H^{-1}\partial H\wedge H^{-1}\bar\partial H H^{-1},
\end{multline*}

which gives us the following expression for the second derivatives of $\Omega'_{ml}$:

\begin{multline*}
\frac{\partial^2\Omega'_{ml}}{\partial z_j\partial\bar z_k}=-\sum_{a,b=1}^n\Omega'_{ma}\frac{\partial^2\omega'_{ab}}{\partial z_j\partial\bar z_k}\Omega'_{bl}
\\+\sum_{a,b,p,q=1}^n\Omega'_{mp}\frac{\partial\omega'_{pq}}{\partial\bar z_k}\Omega'_{qa}\frac{\partial\omega'_{ab}}{\partial z_j}\Omega'_{bl}
+\Omega'_{ma}\frac{\partial\omega'_{ab}}{\partial z_j}\Omega'_{bp}\frac{\partial\omega'_{pq}}{\partial\bar z_k}\Omega'_{ql}.
\end{multline*}

Thus, we get the following expression for the $\omega'$-Laplacian:

\begin{multline}\label{exprlapl}
\Delta_{\omega'}S=-\sum_{j,k,l,m=1}^n\Omega'_{kj}\biggl(\omega_{lm}\frac{\partial^2\Omega'_{ml}}{\partial z_j\partial\bar z_k}+
\Omega'_{ml}\frac{\partial^2\omega_{lm}}{\partial z_j\partial\bar z_k} + L_{jklm} \biggr) \\
= -\sum_{j,k,l,m=1}^n\Omega'_{kj}\Omega'_{ml}\frac{\partial^2\omega_{lm}}{\partial z_j\partial\bar z_k}
+\Omega'_{kj}\omega_{lm}\frac{\partial^2\Omega'_{ml}}{\partial z_j\partial\bar z_k}+\Omega'_{kj}\,L_{jklm}\\
=-\sum_{j,k,l,m=1}^n\Omega'_{kj}\Omega'_{ml}\frac{\partial^2\omega_{lm}}{\partial z_j\partial\bar z_k} +\Omega'_{kj}\,L_{jklm}\\
+\sum_{j,k,l,m,a,b=1}^n\omega_{lm}\Omega'_{kj}\Omega'_{ma}\Omega'_{bl}\frac{\partial^2\omega'_{ab}}{\partial z_j\partial\bar z_k} \\
-\sum_{j,k,l,m,a,b,p,q=1}^n\omega_{lm}\Omega'_{kj}\Omega'_{mp}\Omega'_{qa}\Omega'_{bl}\frac{\partial\omega'_{ab}}{\partial z_j}\frac{\partial\omega'_{pq}}{\partial\bar z_k}\\
-\sum_{j,k,l,m,a,b,p,q=1}^n\omega_{lm}\Omega'_{kj}\Omega'_{ma}\Omega'_{bp}\Omega'_{ql}\frac{\partial\omega'_{ab}}{\partial z_j}\frac{\partial\omega'_{pq}}{\partial\bar z_k}.
\end{multline}

Now, still denoting by $H$ the matrix $(\omega'_{lm})$, we recall the well-known formula to determine in local coordinates the Chern curvature of $\omega'$, namely
$$
\begin{aligned}
\Theta(T_X,\omega') & \simeq_{\textrm{loc}}\bar\partial\bigl(\bar H^{-1}\partial\bar H\bigr) \\
&=\bar\partial\bar H^{-1}\wedge\partial \bar H+\bar H^{-1}\bar\partial\partial\bar H \\
&=-\bar H^{-1}\bar\partial\bar H\bar H^{-1}\wedge\partial\bar H^{-1}+\bar H^{-1}\bar\partial\partial\bar H,
\end{aligned}
$$
where the last equality is obtained by using formula (\ref{barpartialH-1}). So, if we write in these coordinates
$$
\Theta(T_X,\omega')=\sum_{j,k,l,m}c'_{jklm}\,dz_j\wedge d\bar z_k\otimes\biggl(\frac\partial{\partial z_l}\biggr)^*\otimes\frac\partial{\partial\bar z_m},
$$
we obtain the following expression for the coefficients of the Chern curvature tensor:
\begin{equation}\label{c'jklm}
c'_{jkal}=-\sum_{b=1}^n\Omega'_{bl}\frac{\partial^2\omega'_{ab}}{\partial z_j\partial\bar z_k}
+\sum_{b,p,q=1}^n\Omega'_{pl}\Omega'_{bq}\frac{\partial\omega'_{ab}}{\partial z_j}\frac{\partial\omega'_{qp}}{\partial\bar z_k}.
\end{equation}
We can now use the above identity (\ref{c'jklm}) to replace in the right hand side of formula (\ref{exprlapl}) the summand  
$$
\sum_{b=1}^n\Omega'_{bl}\frac{\partial^2\omega'_{ab}}{\partial z_j\partial\bar z_k}
$$
with
$$
-c'_{jkal}+\sum_{b,p,q=1}^n\Omega'_{pl}\Omega'_{bq}\frac{\partial\omega'_{ab}}{\partial z_j}\frac{\partial\omega'_{qp}}{\partial\bar z_k}.
$$
With this substitution, we obtain
\begin{multline}\label{exprlaplsemifinal}
\Delta_{\omega'}S
=-\sum_{j,k,l,m=1}^n\Omega'_{kj}\Omega'_{ml}\frac{\partial^2\omega_{lm}}{\partial z_j\partial\bar z_k} +\Omega'_{kj}\,L_{jklm}\\
+\sum_{j,k,l,m,a=1}^n\omega_{lm}\Omega'_{kj}\Omega'_{ma}\biggl(-c'_{jkal}+\sum_{b,p,q=1}^n\Omega'_{pl}\Omega'_{bq}\frac{\partial\omega'_{ab}}{\partial z_j}\frac{\partial\omega'_{qp}}{\partial\bar z_k}\biggr) \\
-\sum_{j,k,l,m,a,b,p,q=1}^n\omega_{lm}\Omega'_{kj}\Omega'_{mp}\Omega'_{qa}\Omega'_{bl}\frac{\partial\omega'_{ab}}{\partial z_j}\frac{\partial\omega'_{pq}}{\partial\bar z_k}\\
-\sum_{j,k,l,m,a,b,p,q=1}^n\omega_{lm}\Omega'_{kj}\Omega'_{ma}\Omega'_{bp}\Omega'_{ql}\frac{\partial\omega'_{ab}}{\partial z_j}\frac{\partial\omega'_{pq}}{\partial\bar z_k} \\
=-\sum_{j,k,l,m=1}^n\Omega'_{kj}\Omega'_{ml}\frac{\partial^2\omega_{lm}}{\partial z_j\partial\bar z_k} +\Omega'_{kj}\,L_{jklm}\\
-\sum_{j,k,l,m,a=1}^n\omega_{lm}\Omega'_{kj}\Omega'_{ma}c'_{jkal} \\
-\sum_{j,k,l,m,a,b,p,q=1}^n\omega_{lm}\Omega'_{kj}\Omega'_{mp}\Omega'_{qa}\Omega'_{bl}\frac{\partial\omega'_{ab}}{\partial z_j}\frac{\partial\omega'_{pq}}{\partial\bar z_k}.
\end{multline}

Now, since $(\partial/\partial z_1,\dots,\partial/\partial z_n)$ is merely $\omega'$-orthogonal but not necessarily $\omega'$-unitary at $x_0$, the Kähler symmetries of the coefficients $c'_{jklm}$'s at $x_0$ read
$$
c'_{jklm}\sqrt{\lambda_l}\sqrt{\lambda_m}=c'_{lmjk}\sqrt{\lambda_j}\sqrt{\lambda_k}.
$$ 
In particular, 
$$
c'_{jjll}\lambda_l=c'_{lljj}\lambda_j.
$$
We are now in a good position to conclude the proof of the lemma. Indeed, evaluating (\ref{exprlaplsemifinal}) at the point $x_0$ with our initial choice of coordinates gives
\begin{equation}\label{exprlaplfinal}
\begin{aligned}
\Delta_{\omega'}S(x_0)
&=\sum_{j,l=1}^n\frac{c_{jjll}}{\lambda_j\lambda_l}
-\sum_{j,l=1}^n\underbrace{\frac{c'_{jjll}}{\lambda_j\lambda_l}}_{=\frac{c'_{lljj}}{(\lambda_l)^2}} 
-\sum_{j,l,a=1}^n\frac 1{\lambda_j(\lambda_l)^2\lambda_a}\frac{\partial\omega'_{al}}{\partial z_j}\frac{\partial\omega'_{la}}{\partial\bar z_j} \\
&=\sum_{j,l=1}^n\frac{c_{jjll}}{\lambda_j\lambda_l}-\sum_{l=1}^n\frac{\rho'_{ll}}{(\lambda_l)^2}
-\sum_{j,l,a=1}^n\frac {|\partial\omega'_{al}/\partial z_j(x_0)|^2}{\lambda_j(\lambda_l)^2\lambda_a}.
\end{aligned}
\end{equation}
\end{proof}

Our next task will be to estimate the three summands appearing on the right hand side of the differential equality of the above lemma. We begin with the term involving the Ricci curvature of $\omega'$. Recall that the we are supposing that
$$
\operatorname{Ric}(\omega')\ge-\lambda\omega'+\mu\omega.
$$
\begin{lemma}\label{term1}
At the point $x_0\in X$, we have
$$
\sum_{l=1}^n\frac{\rho'_{ll}}{(\lambda_l)^2}\ge 2\pi\biggl(-\lambda S+\frac\mu n S^2\biggr).
$$
\end{lemma}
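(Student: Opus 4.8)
The plan is to extract the diagonal coefficients from the curvature hypothesis and then apply the Cauchy--Schwarz inequality. Recall first that in the chosen coordinates $\omega = i\sum_{l}dz_l\wedge d\bar z_l + O(|z|^2)$ and $\omega' = i\sum_l\lambda_l\,dz_l\wedge d\bar z_l + O(|z|)$ at $x_0$, so that $S(x_0)=\operatorname{tr}_{\omega'}\omega=\sum_{l=1}^n 1/\lambda_l$, and $\rho'_{ll}$ is real since $\Ric(\omega')$ is a real $(1,1)$-form.

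First I would evaluate the hypothesis $\Ric(\omega')\ge-\lambda\omega'+\mu\omega$, an inequality between real $(1,1)$-forms (equivalently, between Hermitian matrices of coefficients), on the tangent vector $\partial/\partial z_l$ at $x_0$, for each $l=1,\dots,n$. Since $\Ric(\omega')=\frac{i}{2\pi}\sum_{j,k}\rho'_{jk}\,dz_j\wedge d\bar z_k$, while $\omega'_{ll}(x_0)=\lambda_l$ and $\omega_{ll}(x_0)=1$, this reads
$$
\frac{\rho'_{ll}}{2\pi}\ge-\lambda\,\lambda_l+\mu,\qquad l=1,\dots,n.
$$
Dividing by $(\lambda_l)^2>0$ and summing over $l$ gives
$$
\sum_{l=1}^n\frac{\rho'_{ll}}{(\lambda_l)^2}\ge 2\pi\biggl(-\lambda\sum_{l=1}^n\frac1{\lambda_l}+\mu\sum_{l=1}^n\frac1{(\lambda_l)^2}\biggr)=2\pi\biggl(-\lambda S+\mu\sum_{l=1}^n\frac1{(\lambda_l)^2}\biggr).
$$
Then I would invoke Cauchy--Schwarz in the form $\bigl(\sum_l 1/\lambda_l\bigr)^2\le n\sum_l 1/(\lambda_l)^2$, that is $\sum_l 1/(\lambda_l)^2\ge S^2/n$. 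Because $\mu\ge 0$, replacing $\sum_l 1/(\lambda_l)^2$ by the smaller quantity $S^2/n$ only diminishes the right-hand side, and one obtains exactly $\sum_l\rho'_{ll}/(\lambda_l)^2\ge 2\pi\bigl(-\lambda S+\frac\mu n S^2\bigr)$.

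There is no genuine obstacle here; the two points that require care are keeping the $1/(2\pi)$ normalization in the definition of $\Ric(\omega')$ straight throughout, and observing that the hypothesis $\mu\ge 0$ is precisely what makes the Cauchy--Schwarz bound point in the right direction --- for negative $\mu$ one would instead need an \emph{upper} bound on $\sum_l 1/(\lambda_l)^2$, which is not available.
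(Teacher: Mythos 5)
Your proof is correct and follows essentially the same route as the paper's: read the Ricci lower bound diagonally at $x_0$ to get $\rho'_{ll}\ge 2\pi(-\lambda\lambda_l+\mu)$, divide by $(\lambda_l)^2$, sum, and bound $\sum_l 1/(\lambda_l)^2\ge S^2/n$ by Cauchy--Schwarz (the paper phrases this as the comparison of $1$- and $2$-norms). Your explicit remark that $\mu\ge 0$ is what makes the last step legitimate is a worthwhile observation that the paper leaves implicit in the hypotheses of Claim \ref{step2}.
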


\begin{proof}
The hypothesis on the Ricci curvature of $\omega'$, when red at the point $x_0$ with our choice of coordinates, gives
$$
\rho'_{ll}\ge 2\pi(-\lambda\lambda_l+\mu).
$$
Thus, we get
$$
\sum_{l=1}^n\frac{\rho'_{ll}}{(\lambda_l)^2}\ge -2\pi\lambda\sum_{l=1}^n\frac{1}{\lambda_l}+2\pi\mu\sum_{l=1}^n\frac{1}{(\lambda_l)^2}.
$$
Now, since the $\lambda_l$'s are the eigenvalues of $\omega'$ with respect to $\omega$, the eigenvalues of omega with respect to $\omega'$ are $1/\lambda_l$, $l=1,\dots,n$, and therefore $S=\sum_{l=1}^n1/\lambda_l$. Moreover, by the standard inequality between $1$-norm and $2$-norm of vectors in $\mathbb R^n$, we have $\sum_{l=1}^n1/(\lambda_l)^2\ge 1/n\bigl(\sum_{l=1}^n1/\lambda_l\bigr)^2$. We finally obtain
$$
\sum_{l=1}^n\frac{\rho'_{ll}}{(\lambda_l)^2}\ge 2\pi\biggl(-\lambda S+\frac\mu n S^2\biggr).
$$
\end{proof}

Now, we treat the term with the first order derivatives of the metric $\omega'$. In doing this, we have to keep in mind that, at the end of the day, we want to estimate $\Delta_{\omega'}\log S$. This Laplacian is given in coordinates by
$$
\begin{aligned}
\Delta_{\omega'}\log S&=-\sum_{j,k=1}^n\Omega'_{kj}\underbrace{\frac{\partial^2\log S}{\partial z_j\partial\bar z_k}}_{=\frac{\partial}{\partial z_j}\biggl(\frac 1S \frac{\partial S}{\partial\bar z_k}\biggr)=-\frac 1{S^2}\frac{\partial S}{\partial z_j}\frac{\partial S}{\partial\bar z_k}+\frac 1S\frac{\partial^2 S}{\partial z_j\partial\bar z_k}}\\
&=\frac 1S\Delta_{\omega'}S+\frac 1{S^2}\sum_{j,k=1}^n\Omega'_{kj}\frac{\partial S}{\partial z_j}\frac{\partial S}{\partial\bar z_k}.
\end{aligned}
$$
Once computed at $x_0$, we have
\begin{equation}\label{laplacianlog}
\Delta_{\omega'}\log S(x_0)=\frac 1{S(x_0)}\Delta_{\omega'}S(x_0)+\frac 1{S(x_0)^2}\sum_{j=1}^n\frac 1{\lambda_j}\biggl|\frac{\partial S}{\partial z_j}(x_0)\biggr|^2.
\end{equation}
What we want to do in the lemma below is then to try to express these first order derivatives in terms of first order derivatives of $S$.

\begin{lemma}\label{term2}
At the point $x_0\in X$, we have
$$
\sum_{j,l,a=1}^n\frac {\bigl|\partial\omega'_{al}/\partial z_j\bigr|^2}{\lambda_j(\lambda_l)^2\lambda_a}\ge\frac 1{S(x_0)}\sum_{j=1}^n\frac 1{\lambda_j}\biggl|\frac{\partial S}{\partial z_j}(x_0)\biggr|^2.
$$
\end{lemma}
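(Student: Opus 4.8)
The plan is to compute $\partial S/\partial z_j$ at $x_0$ explicitly, discard the off-diagonal contributions on the left-hand side (they are manifestly nonnegative), and then finish with a single application of the Cauchy--Schwarz inequality for each fixed index $j$, with the exponents split so that the slack factor is exactly $S(x_0)$.

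First I would exploit the choice of coordinates. Since $(z_1,\dots,z_n)$ are $\omega$-normal coordinates at $x_0$, we have $\omega_{lm}(x_0)=\delta_{lm}$ and $\partial\omega_{lm}/\partial z_j(x_0)=0$. Differentiating the identity $S=\sum_{l,m}\Omega'_{ml}\,\omega_{lm}$ and evaluating at $x_0$ therefore kills the terms carrying a derivative of $\omega$, leaving $\frac{\partial S}{\partial z_j}(x_0)=\sum_{l}\frac{\partial\Omega'_{ll}}{\partial z_j}(x_0)$. Using the matrix identity $(\ref{partialH-1})$, which gives $\partial H^{-1}=-H^{-1}\,\partial H\,H^{-1}$, together with $\Omega'_{ml}(x_0)=\delta_{ml}/\lambda_l$, one gets $\frac{\partial\Omega'_{ml}}{\partial z_j}(x_0)=-\frac{1}{\lambda_m\lambda_l}\frac{\partial\omega'_{ml}}{\partial z_j}(x_0)$, and hence
$$
\frac{\partial S}{\partial z_j}(x_0)=-\sum_{l=1}^n\frac{1}{(\lambda_l)^2}\frac{\partial\omega'_{ll}}{\partial z_j}(x_0).
$$

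Next, on the left-hand side of the desired inequality every summand is nonnegative, so it suffices to retain the diagonal terms $a=l$:
$$
\sum_{j,l,a=1}^n\frac{\bigl|\partial\omega'_{al}/\partial z_j\bigr|^2}{\lambda_j(\lambda_l)^2\lambda_a}\ \ge\ \sum_{j,l=1}^n\frac{\bigl|\partial\omega'_{ll}/\partial z_j\bigr|^2}{\lambda_j(\lambda_l)^3}.
$$
Then, for each fixed $j$, I would apply Cauchy--Schwarz to the expression for $\partial S/\partial z_j(x_0)$ above by writing $\frac{1}{(\lambda_l)^2}=\frac{1}{(\lambda_l)^{3/2}}\cdot\frac{1}{(\lambda_l)^{1/2}}$, obtaining
$$
\biggl|\frac{\partial S}{\partial z_j}(x_0)\biggr|^2\ \le\ \biggl(\sum_{l=1}^n\frac{\bigl|\partial\omega'_{ll}/\partial z_j\bigr|^2}{(\lambda_l)^3}\biggr)\biggl(\sum_{l=1}^n\frac{1}{\lambda_l}\biggr)\ =\ S(x_0)\sum_{l=1}^n\frac{\bigl|\partial\omega'_{ll}/\partial z_j\bigr|^2}{(\lambda_l)^3},
$$
where I used $S(x_0)=\sum_l 1/\lambda_l$. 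Dividing by $S(x_0)$, multiplying by $1/\lambda_j$, and summing over $j$ then yields precisely the asserted estimate.

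The argument is essentially routine; the only point requiring a little care is the first step, namely checking that the $\omega$-normal-coordinate hypothesis makes the $\partial\omega_{lm}$ terms vanish while the derivatives $\partial\omega'_{lm}$ of the \emph{other} metric of course survive, and propagating this correctly through the inverse-matrix differentiation formula $(\ref{partialH-1})$. The choice of how to split the exponents in Cauchy--Schwarz---so that the slack factor comes out to be exactly $S$ and not some other power sum of the $1/\lambda_l$---is the small observation that makes the constant in the lemma match what is needed in $(\ref{laplacianlog})$.
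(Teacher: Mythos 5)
Your argument is correct and coincides with the proof in the text: the same computation of $\partial S/\partial z_j(x_0)$ via normal coordinates and the identity $\partial H^{-1}=-H^{-1}\partial H\,H^{-1}$, the same discarding of the off-diagonal terms $a\ne l$, and the same Cauchy--Schwarz with the exponent split $\frac{1}{(\lambda_l)^2}=\frac{1}{(\lambda_l)^{3/2}}\cdot\frac{1}{(\lambda_l)^{1/2}}$ producing the factor $S(x_0)=\sum_l 1/\lambda_l$. Nothing to add.
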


\begin{proof}
Since the sum we are dealing with is made up of non negative terms, we have by plain minoration
$$
\sum_{j,l,a=1}^n\frac {\bigl|\partial\omega'_{al}/\partial z_j\bigr|^2}{\lambda_j(\lambda_l)^2\lambda_a}\ge
\sum_{j,l=1}^n\frac {\bigl|\partial\omega'_{ll}/\partial z_j\bigr|^2}{\lambda_j(\lambda_l)^3}.
$$
Now, let us compute $\partial S/\partial z_j$ at $x_0$. We have
$$
\begin{aligned}
\frac{\partial S}{\partial z_j}(x_0)&= \frac{\partial}{\partial z_j}\sum_{l,m=1}^n\Omega'_{ml}\omega_{lm}\biggr|_{x_0} \\
&= \sum_{l,m=1}^n\frac{\partial\Omega'_{ml}}{\partial z_j}\omega_{lm}+\Omega'_{ml}\frac{\partial\omega_{lm}}{\partial z_j}\biggr|_{x_0}=\sum_{l=1}^n\frac{\partial\Omega'_{ll}}{\partial z_j}(x_0).
\end{aligned}
$$
Now, we use the identity (\ref{partialH-1}) to replace $\partial\Omega'_{ll}/\partial z_j(x_0)$ with
$$
-\sum_{a,b=1}^n\Omega'_{la}\frac{\partial\omega'_{ab}}{\partial z_j}\Omega'_{bl}\biggr|_{x_0}=-\frac 1{(\lambda_l)^2}\frac{\partial\omega'_{ll}}{\partial z_j}(x_0).
$$
Thus, we obtain
$$
\frac{\partial S}{\partial z_j}(x_0)=-\sum_{l=1}^n\frac 1{(\lambda_l)^2}\frac{\partial\omega'_{ll}}{\partial z_j}(x_0).
$$
Now, inspired by (\ref{laplacianlog}), we compute
$$
\begin{aligned}
\sum_{j=1}^n\frac 1{\lambda_j}\biggl|\frac{\partial S}{\partial z_j}(x_0)\biggr|^2 &=
\sum_{j=1}^n\frac 1{\lambda_j}\left|\sum_{l=1}^n\frac 1{(\lambda_l)^2}\frac{\partial\omega'_{ll}}{\partial z_j}(x_0)\right|^2 \\
&=\sum_{j=1}^n\frac 1{\lambda_j}\left|\sum_{l=1}^n\frac 1{(\lambda_l)^{1/2}}\frac{\partial\omega'_{ll}/\partial z_j(x_0)}{(\lambda_l)^{3/2}}\right|^2 \\
&\le \sum_{j=1}^n\frac 1{\lambda_j}\sum_{k=1}^n\frac 1{\lambda_k}\sum_{l=1}^n\frac{\bigl|\partial\omega'_{ll}/\partial z_j(x_0)\bigr|^2}{(\lambda_l)^{3}} \\
&=S(x_0)\sum_{l,j=1}^n\frac {\bigl|\partial\omega'_{ll}/\partial z_j(x_0)\bigr|^2}{\lambda_j(\lambda_l)^3},
\end{aligned}
$$
where the inequality is given by Cauchy-Schwarz. The lemma follows.
\end{proof}

Finally, we estimate the term involving the curvature of $\omega$, using the hypothesis on the negativity of the holomorphic sectional curvature.

\begin{lemma}\label{term3}
At the point $x_0\in X$, we have
$$
\sum_{j,l=1}^n\frac{c_{jjll}}{\lambda_j\lambda_l}\le -\frac{\kappa(n+1)}{2n}S^2.
$$
\end{lemma}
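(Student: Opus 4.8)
The plan is to deduce this from Royden's lemma, i.e.\ from the averaging trick that controls a ``partial trace'' $\sum_{j,l}c_{jjll}a_j a_l$ (with $a_j\ge 0$) of a Kähler curvature tensor solely in terms of the holomorphic sectional curvature. First I would observe that the asserted inequality is homogeneous of degree two in the $n$-tuple $(1/\lambda_1,\dots,1/\lambda_n)$, whose coordinate sum is exactly $S=\operatorname{tr}_{\omega'}\omega$; so, setting $a_j:=1/(\lambda_j S)$, we get $a_j\ge 0$ and $\sum_j a_j=1$, and it suffices to prove
$$
\sum_{j,l=1}^n c_{jjll}\,a_j a_l\le-\frac{\kappa(n+1)}{2n}.
$$

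Next I would run the averaging argument in the $\omega$-normal coordinates at $x_0$ already fixed in the proof of Claim \ref{step2}, where the $c_{jklm}$ are the curvature coefficients of $\omega$ and obey the Kähler symmetries recalled before Proposition \ref{average}. For $\theta=(\theta_1,\dots,\theta_n)\in[0,2\pi]^n$ set $v(\theta):=\sum_{j}e^{i\theta_j}\sqrt{a_j}\,\partial/\partial z_j$; since $\sum_j a_j=1$ this is an $\omega$-unit vector at $x_0$, so $\HSC_\omega(x_0,[v(\theta)])\le-\kappa$ for every $\theta$. Integrating $\HSC_\omega(x_0,[v(\theta)])=\sum_{j,k,l,m}c_{jklm}v_j(\theta)\overline{v_k(\theta)}v_l(\theta)\overline{v_m(\theta)}$ over the torus with normalized Haar measure, only the monomials with $\{j,l\}=\{k,m\}$ as multisets survive, namely those with $j=k,\ l=m$ and those with $j=m,\ l=k$, the fully diagonal ones being counted twice; using the Kähler symmetry $c_{jllj}=c_{jjll}$ (obtained from $c_{jklm}=c_{lkjm}=c_{lmjk}$, exactly as $c_{jkkj}=c_{jjkk}$) this yields
$$
2\sum_{j,l=1}^n c_{jjll}\,a_j a_l-\sum_{j=1}^n c_{jjjj}\,a_j^2\le-\kappa.
$$
Since each $\partial/\partial z_j$ is $\omega$-unit, $c_{jjjj}=\HSC_\omega(x_0,[\partial/\partial z_j])\le-\kappa$, hence $-\sum_j c_{jjjj}a_j^2\ge\kappa\sum_j a_j^2$, and therefore $2\sum_{j,l}c_{jjll}a_j a_l\le-\kappa\bigl(1+\sum_j a_j^2\bigr)$. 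Finally $\sum_j a_j^2\ge\frac1n\bigl(\sum_j a_j\bigr)^2=\frac1n$ by the $1$-norm/$2$-norm inequality already used in Lemma \ref{term1}, which gives $\sum_{j,l}c_{jjll}a_j a_l\le-\frac{\kappa(n+1)}{2n}$; multiplying through by $S^2$ is the claim.

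The only genuinely delicate point is the bookkeeping of which monomials survive the torus average, together with the correct invocation of the Kähler symmetries; once those are in place the estimate is purely formal, and it is sharp, as one checks on the complex hyperbolic curvature tensor $c_{jjll}=-\tfrac\kappa2(1+\delta_{jl})$. An alternative is to cite Royden's lemma as a black box, but since it was only alluded to earlier I would prefer to include this short self-contained derivation.
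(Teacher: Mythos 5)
Your proof is correct and is essentially the paper's argument: the paper proves this exact estimate by applying Royden's Proposition \ref{royden} to the $\omega$-orthogonal vectors $\xi_j=\lambda_j^{-1/2}\,\partial/\partial z_j$, and that proposition is itself proved by the same averaging trick you carry out, only over the discrete group $\mathbb Z_4^n$ of fourth roots of unity rather than over the torus $U(1)^n$ (both kill exactly the monomials with $\{j,l\}\neq\{k,m\}$, and the subsequent handling of the diagonal term via $c_{jjjj}\le-\kappa$ and the $\ell^1$--$\ell^2$ inequality is identical). The only difference is that you inline a self-contained continuous-average version with the normalization $\sum_j a_j=1$ built in, whereas the paper cites the unnormalized discrete statement as a separate lemma.
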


Classically this term has been bounded in terms of a uniform bound on the holomorphic bisectional curvature of $\omega$. In order to prove this lemma, we need to be able to transform an information on the sum of holomorphic bisectional curvature type terms into an estimate using holomorphic sectional curvature only.
Next proposition in hermitian linear algebra is the key point to do that. It is due to Royden.

\begin{proposition}[Royden \cite{Roy80}]\label{royden}
Let $\xi_1,\dots,\xi_\nu$ be mutually orthogonal (but not necessarily unitary) non-zero vectors of a hermitian vector space $(V,h)$. Suppose that $\Theta(\xi,\eta,\zeta,\omega)$ is a symmetric \lq\lq bi-hermitian\rq\rq{} form, \textsl{i.e.} $\Theta$ is sesquilinear in the first two and last two variables and has the same pointwise properties as those of the (contraction with the metric of the) Chern curvature of a Kähler metric. Suppose also that there exists a real constant $K$ such that for all $\xi\in V$ one has
$$
\Theta(\xi,\xi,\xi,\xi)\le K\,||\xi||^4_h.
$$
Then, 
$$
\sum_{\alpha,\beta}\Theta(\xi_\alpha,\xi_\alpha,\xi_\beta,\xi_\beta)\le\frac 12\,K\,\left(\biggl(\sum_\alpha ||\xi_\alpha||^2_h\biggr)^2+\sum_\alpha||\xi_\alpha||^4_h\right).
$$
Moreover, if $K\le 0$, then
$$
\sum_{\alpha,\beta}\Theta(\xi_\alpha,\xi_\alpha,\xi_\beta,\xi_\beta)\le\frac{\nu+1}{2\nu}\,K\,\biggl(\sum_\alpha||\xi_\alpha||^2_h\biggr)^2.
$$
\end{proposition}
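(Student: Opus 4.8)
The plan is to feed the hypothesis $\Theta(\xi,\xi,\xi,\xi)\le K\|\xi\|_h^4$ not a single vector but a $\nu$-parameter family of phase-twisted combinations of the $\xi_\alpha$, and then to average over the phases. Concretely, for $\epsilon=(\epsilon_1,\dots,\epsilon_\nu)$ with $\epsilon_\alpha=e^{i\theta_\alpha}$ I would set
$$
\xi_\epsilon=\sum_{\alpha=1}^\nu\epsilon_\alpha\,\xi_\alpha,
$$
and observe that mutual $h$-orthogonality of the $\xi_\alpha$ together with $|\epsilon_\alpha|=1$ forces $\|\xi_\epsilon\|_h^2=\sum_\alpha\|\xi_\alpha\|_h^2=:M$, independently of $\epsilon$. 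Expanding by sesquilinearity in each of the four slots,
$$
\Theta(\xi_\epsilon,\xi_\epsilon,\xi_\epsilon,\xi_\epsilon)=\sum_{\alpha,\beta,\gamma,\delta}\epsilon_\alpha\bar\epsilon_\beta\epsilon_\gamma\bar\epsilon_\delta\;\Theta(\xi_\alpha,\xi_\beta,\xi_\gamma,\xi_\delta).
$$

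Next I would average each $\theta_\alpha$ independently over $[0,2\pi)$. The monomial $\epsilon_\alpha\bar\epsilon_\beta\epsilon_\gamma\bar\epsilon_\delta$ has mean $1$ exactly when $\{\alpha,\gamma\}=\{\beta,\delta\}$ as multisets and mean $0$ otherwise. Writing $T:=\sum_{\alpha,\beta}\Theta(\xi_\alpha,\xi_\alpha,\xi_\beta,\xi_\beta)$ for the quantity to be estimated and $D:=\sum_\alpha\Theta(\xi_\alpha,\xi_\alpha,\xi_\alpha,\xi_\alpha)$ for the diagonal term, the surviving index-tuples form the union of $\{\alpha=\beta,\ \gamma=\delta\}$ and $\{\alpha=\delta,\ \gamma=\beta\}$, with intersection the full diagonal. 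The first family contributes $T$; the second contributes $\sum_{\alpha,\beta}\Theta(\xi_\alpha,\xi_\beta,\xi_\beta,\xi_\alpha)$, which equals $T$ again by the K\"ahler curvature symmetry $\Theta(\xi_\alpha,\xi_\beta,\xi_\beta,\xi_\alpha)=\Theta(\xi_\alpha,\xi_\alpha,\xi_\beta,\xi_\beta)$; the intersection contributes $D$. So inclusion--exclusion gives that the $\theta$-average of $\Theta(\xi_\epsilon,\xi_\epsilon,\xi_\epsilon,\xi_\epsilon)$ is $2T-D$, while the hypothesis (valid for every $\epsilon$, hence after averaging) bounds it by $K M^2$. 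Thus $2T-D\le KM^2$, i.e. $T\le\tfrac12(KM^2+D)$.

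To finish, applying the hypothesis to each $\xi_\alpha$ separately gives $D\le K\sum_\alpha\|\xi_\alpha\|_h^4$, and substituting yields
$$
\sum_{\alpha,\beta}\Theta(\xi_\alpha,\xi_\alpha,\xi_\beta,\xi_\beta)\le\frac12\,K\left(\Bigl(\sum_\alpha\|\xi_\alpha\|_h^2\Bigr)^2+\sum_\alpha\|\xi_\alpha\|_h^4\right),
$$
which is the first assertion. For the second, when $K\le0$ I would sharpen the estimate on $D$ via the comparison $\sum_\alpha\|\xi_\alpha\|_h^4\ge\frac1\nu\bigl(\sum_\alpha\|\xi_\alpha\|_h^2\bigr)^2$ (Cauchy--Schwarz, equivalently the $1$-norm/$2$-norm inequality on $\mathbb R^\nu$); since $K\le0$ this reverses, giving $D\le\frac K\nu M^2$, whence $T\le\tfrac12 K\bigl(1+\tfrac1\nu\bigr)M^2=\tfrac{\nu+1}{2\nu}K M^2$.

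The argument has no analytic content beyond this bookkeeping; the one place that must be treated carefully is the identification $\Theta(\xi_\alpha,\xi_\beta,\xi_\beta,\xi_\alpha)=\Theta(\xi_\alpha,\xi_\alpha,\xi_\beta,\xi_\beta)$. This is the K\"ahler symmetry of the curvature tensor (the one that fails for a general Hermitian bundle, cf.\ the symmetries $c_{jklm}=c_{lkjm}=c_{lmjk}$ recorded earlier), and it is precisely what turns the averaged expression into $2T-D$ rather than something weaker --- without it the argument does not close. One should also keep in mind that the $\xi_\alpha$ are assumed only orthogonal, not unit, so that $\|\xi_\epsilon\|_h^2$ comes out as the unweighted sum $M=\sum_\alpha\|\xi_\alpha\|_h^2$, and that the refinement in the last step genuinely uses the sign hypothesis $K\le0$.
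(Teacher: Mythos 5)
Your argument is correct and is essentially the paper's proof: the paper polarizes with the discrete group $\mathbb Z_4^\nu$ of fourth roots of unity and averages over the $4^\nu$ sign/phase choices, whereas you average continuously over the torus of phases $(\theta_1,\dots,\theta_\nu)$ --- the same orthogonality of characters kills the same off-multiset terms, and the rest (the K\"ahler symmetry identifying $\Theta(\xi_\alpha,\xi_\beta,\xi_\beta,\xi_\alpha)$ with $\Theta(\xi_\alpha,\xi_\alpha,\xi_\beta,\xi_\beta)$, the bound $D\le K\sum_\alpha\|\xi_\alpha\|_h^4$, and the Cauchy--Schwarz refinement for $K\le 0$) is identical. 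No gaps.
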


We shall use this proposition with 
$$
\bigl\langle\Theta(T_X,\omega)(\bullet,\bar\bullet)\cdot\bullet,\bullet\bigr\rangle_\omega
$$
as the symmetric \lq\lq bi-hermitian\rq\rq{} form on $T_{X,x_0}$ in the statement. In terms of holomorphic bisectional curvature it can be rephrased as follows, when $\nu=n=\dim X$.

\noindent
\emph{Suppose that a Kähler metric $\omega$ has negative holomorphic sectional curvature at the point $x_0$, bounded above by a negative constant $-\kappa$. Then, if $\xi_1,\dots,\xi_n$ is a $\omega$-orthogonal basis for $T_{X,x_0}$ we have
$$
\sum_{\alpha,\beta=1}^n ||\xi_\alpha||^2_\omega ||\xi_\beta||^2_\omega\,\operatorname{HBC}_\omega(\xi_\alpha,\xi_\beta)\le-\frac{\kappa(n+1)}{2n}\biggl(\sum_{\alpha=1}^n||\xi_\alpha||^2_\omega\biggr)^2.
$$}

Here is the proof.

\begin{proof}
Realize $\mathbb Z_4$ as the group of 4th roots of unity and set, for a vector $A=(\epsilon_1,\dots,\epsilon_\nu)\in\mathbb Z^\nu_4$,
$$
\xi_A=\sum_\alpha\epsilon_\alpha\,\xi_\alpha.
$$
Then, by orthogonality, $||\xi_A||^2_h=\sum_\alpha ||\xi_\alpha||^2_h$, and thus by hypothesis
$$
\Theta(\xi_A,\xi_A,\xi_A,\xi_A)\le K\,||\xi_A||^4_h= K\,\biggl(\sum_\alpha||\xi_\alpha||^2_h\biggr)^2.
$$
Now, we take the sum over all possible $A\in\mathbb Z^\nu_4$ and get
$$
\begin{aligned}
K\,\biggl(\sum_\alpha||\xi_\alpha||^2_h\biggr)^2 &\ge\frac 1{4^\nu}\sum_{A}\Theta(\xi_A,\xi_A,\xi_A,\xi_A) \\
&=\frac 1{4^\nu}\sum_{A}\sum_{\alpha,\beta,\gamma,\delta}\epsilon_\alpha\bar\epsilon_\beta\epsilon_\gamma\bar\epsilon_\delta\,\Theta(\xi_\alpha,\xi_\beta,\xi_\gamma,\xi_\delta) \\
&=\frac 1{4^\nu}\sum_{\alpha,\beta,\gamma,\delta}\sum_{A}\frac{\epsilon_\alpha\epsilon_\gamma}{\epsilon_\beta\epsilon_\delta}\,\Theta(\xi_\alpha,\xi_\beta,\xi_\gamma,\xi_\delta).
\end{aligned}
$$
Now, fix a $4$-tuple $(\alpha,\beta,\gamma,\delta)$. We claim that only the terms with $\alpha=\beta$ and $\gamma=\delta$ or $\alpha=\delta$ and $\beta=\gamma$ can survive after summing over all $A$. Thus, we are left only with the following terms
\begin{multline*}
\frac 1{4^\nu}\sum_{\alpha,\beta,\gamma,\delta}\sum_{A}\frac{\epsilon_\alpha\epsilon_\gamma}{\epsilon_\beta\epsilon_\delta}\,\Theta(\xi_\alpha,\xi_\beta,\xi_\gamma,\xi_\delta) \\
= \sum_\alpha \Theta(\xi_\alpha,\xi_\alpha,\xi_\alpha,\xi_\alpha)+\sum_{\alpha\ne\gamma}\Theta(\xi_\alpha,\xi_\alpha,\xi_\gamma,\xi_\gamma)+\Theta(\xi_\alpha,\xi_\gamma,\xi_\gamma,\xi_\alpha).
\end{multline*}
The claim is straightforwardly verified, since for all the other terms, for each $A\in\mathbb Z^\nu_4$ one can find an $A'=(\epsilon_1',\dots,\epsilon_\nu')\in\mathbb Z^\nu_4$ such that $\frac{\epsilon_\alpha\epsilon_\gamma}{\epsilon_\beta\epsilon_\delta}=-\frac{\epsilon_\alpha'\epsilon_\gamma'}{\epsilon_\beta'\epsilon_\delta'}$.

Now, by symmetry of $\Theta$, adding $\sum_\alpha \Theta(\xi_\alpha,\xi_\alpha,\xi_\alpha,\xi_\alpha)$ to both side and using the upper bound as in the hypotheses, we get
$$
2\sum_{\alpha,\gamma}\Theta(\xi_\alpha,\xi_\alpha,\xi_\gamma,\xi_\gamma)\le K\,\left(\biggl(\sum_\alpha ||\xi_\alpha||^2_h\biggr)^2+\sum_\alpha||\xi_\alpha||^4_h\right).
$$
To end the proof, observe that applying the Cauchy--Schwarz inequality in $\mathbb R^\nu$ to the vectors $(||\xi_1||^2_h,\dots,||\xi_\nu||^2_h)$ and $(1,\dots,1)$, we have
$$
\biggl(\sum_\alpha ||\xi_\alpha||^2_h\biggr)^2\le \nu\,\sum_\alpha||\xi_\alpha||^4_h,
$$
so that if $K\le 0$, then 
$$
K\,\sum_\alpha||\xi_\alpha||^4_h\le \frac K\nu\,\biggl(\sum_\alpha ||\xi_\alpha||^2_h\biggr)^2,
$$
and thus
$$
\sum_{\alpha,\gamma}\Theta(\xi_\alpha,\xi_\alpha,\xi_\gamma,\xi_\gamma)\le\frac{\nu+1}{2\nu} K\,\biggl(\sum_\alpha ||\xi_\alpha||^2_h\biggr)^2,
$$
as desired.
\end{proof}

We are now ready to give a

\begin{proof}[Proof of Lemma \ref{term3}]
Set 
$$
\xi_j:=\frac 1{\sqrt{\lambda_j}}\frac \partial{\partial z_j},\quad j=1,\dots n,
$$
so that $\xi_1,\dots,\xi_n$ is a $\omega$-orthogonal basis for $T_{X,x_0}$. Now, it suffices to observe that 
$$
\begin{aligned}
\frac{c_{jjll}}{\lambda_j\lambda_l}&=\bigl\langle\Theta(T_X,\omega)(\xi_j,\bar\xi_j)\cdot\xi_l,\xi_l\bigr\rangle_\omega \\
&= ||\xi_j||^2_\omega ||\xi_l||^2_\omega\,\operatorname{HBC}_\omega(\xi_j,\xi_l).
\end{aligned}
$$
Now take the sum over all $j,l=1,\dots,n$, to obtain
$$
\sum_{j,l=1}^n\frac{c_{jjll}}{\lambda_j\lambda_l} \le -\frac{\kappa(n+1)}{2n}\biggl(\sum_{\alpha=1}^n\frac 1{\lambda_j}\biggr)^2 \\
=-\frac{\kappa(n+1)}{2n}S^2.
$$
\end{proof}

Now, to conclude the proof of Claim \ref{step2}, \textsl{i.e.} to show inequality (\ref{diffineq}), we just put together the three estimates of the above lemmata, and plug them into formula (\ref{laplacianlog}). We get:
$$
\begin{aligned}
-\Delta_{\omega'}\log S(x_0)&=-\frac 1{S(x_0)}\Delta_{\omega'}S(x_0)-\frac 1{S(x_0)^2}\sum_{j=1}^n\frac 1{\lambda_j}\biggl|\frac{\partial S}{\partial z_j}(x_0)\biggr|^2 \\
&=\frac 1{S(x_0)}\left(\sum_{l=1}^n\frac{\rho'_{ll}}{(\lambda_l)^2}
+\sum_{j,l,a=1}^n\frac {\bigl|\partial\omega'_{al}/\partial z_j(x_0)\bigr|^2}{\lambda_j(\lambda_l)^2\lambda_a}-\sum_{j,l=1}^n\frac{c_{jjll}}{\lambda_j\lambda_l}\right)\\
&\qquad-\frac 1{S(x_0)^2}\sum_{j=1}^n\frac 1{\lambda_j}\biggl|\frac{\partial S}{\partial z_j}(x_0)\biggr|^2 \\
& \ge\frac 1{S(x_0)}\left(2\pi\biggl(-\lambda S(x_0)+\frac\mu n S(x_0)^2\biggr)\right. \\
&\left.\qquad+\frac 1{S(x_0)}\sum_{j=1}^n\frac 1{\lambda_j}\biggl|\frac{\partial S}{\partial z_j}(x_0)\biggr|^2+\frac{\kappa(n+1)}{2n}S(x_0)^2\right)\\
&\qquad\qquad-\frac 1{S(x_0)^2}\sum_{j=1}^n\frac 1{\lambda_j}\biggl|\frac{\partial S}{\partial z_j}(x_0)\biggr|^2 \\
&=\biggl(\frac{\kappa(n+1)}{2n}+2\pi\frac\mu n\biggr)S(x_0)-2\pi\lambda,
\end{aligned}
$$
as desired.
\end{proof}

\begin{proof}[Proof of Claim \ref{step3}]
We want to show that the limit
$$
\lim_{\varepsilon\to 0^+}\int_X\omega_\varepsilon^n=\lim_{\varepsilon\to 0^+}\int_X e^{u_\varepsilon}\,\omega^n
$$
is strictly positive.
The first observation is that the functions $u_\varepsilon$ are all $\omega'$-plurisubharmonic for some fixed Kähler form $\omega'$ and $\varepsilon>0$ small enough. For, let $\ell>0$ be such that $\ell\omega-\Ric_\omega$ is positive and call $\omega'=\ell\omega-\Ric_\omega$. Thus, for all $0<\varepsilon<\ell$, one has 
$$
0<\varepsilon\omega-\Ric_\omega+i\partial\bar\partial u_\varepsilon<\ell\omega-\Ric_\omega+i\partial\bar\partial u_\varepsilon=\omega'+i\partial\bar\partial u_\varepsilon.
$$
Therefore, since the $u_\varepsilon$'s are all uniformly bounded from above, by \cite[Proposition 2.6]{GZ05}, either $\{u_\varepsilon\}$ converges uniformly to $-\infty$ on $X$ or it is relatively compact in $L^1(X)$. Suppose for a moment that we are in the second case. Then, there exists a subsequence $\{u_{\varepsilon_k}\}$ converging in $L^1(X)$ and moreover the limit coincides \textsl{a.e.} with a uniquely determined $\omega'$-plurisubharmonic function $u$. Up to passing to a further subsequence, we can also suppose that $u_{\varepsilon_k}$ converges pointwise \textsl{a.e.} to $u$. But then, $e^{u_{\varepsilon_k}}\to e^u$ pointwise \textsl{a.e.} on $X$. On the other hand, by Claim \ref{step1}, we have $e^{u_{\varepsilon_k}}\le e^C$ so that, by dominated convergence, we also have $L^1(X)$-convergence and
$$
\lim_{k\to\infty}\int_X e^{u_{\varepsilon_k}}\,\omega^n=\int_X e^u\omega^n>0.
$$
The upshot is that what we need to prove is that $\{u_\varepsilon\}$ does not converge uniformly to $-\infty$ on $X$. We shall thus provide a lower bound for $\sup_X u_\varepsilon$, as follows.

Recall that we defined the smooth positive function $S_\varepsilon$ on $X$ by
$$
\omega\wedge\omega_\varepsilon^{n-1}=\frac{S_\varepsilon}n\,\omega_\varepsilon^{n}.
$$
Now, set $T_\varepsilon=\log S_\varepsilon$. In other words, $T_\varepsilon$ is the logarithm of the trace of $\omega$ with respect to $\omega_\varepsilon$.

\begin{lemma}\label{lem:sge-m}
The function $T_\varepsilon$ satisfies the following inequality:
$$
T_\varepsilon> -\frac{u_\varepsilon}n.
$$
\end{lemma}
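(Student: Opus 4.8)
The plan is to argue pointwise and reduce everything to the arithmetic--geometric mean inequality applied to the eigenvalues of $\omega$ with respect to $\omega_\varepsilon$.

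First I would fix an arbitrary point $x\in X$ and choose holomorphic coordinates centered at $x$ in which $\omega_\varepsilon$ is the standard Euclidean form at $x$, while $\omega$ is simultaneously diagonal there, with (positive) eigenvalues $\mu_1,\dots,\mu_n$. By the very definition of $S_\varepsilon$ as $\operatorname{tr}_{\omega_\varepsilon}\omega$ one then has $S_\varepsilon(x)=\sum_{j=1}^n\mu_j$, and from the Monge--Amp\`ere relation $\omega_\varepsilon^n=e^{u_\varepsilon}\omega^n$ established in Claim~\ref{step1} one reads off, at $x$, that $\prod_{j=1}^n\mu_j=\omega^n/\omega_\varepsilon^n=e^{-u_\varepsilon(x)}$.

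Now I would simply invoke AM--GM on the positive numbers $\mu_1,\dots,\mu_n$:
$$
\frac{S_\varepsilon(x)}{n}=\frac1n\sum_{j=1}^n\mu_j\ \ge\ \Bigl(\prod_{j=1}^n\mu_j\Bigr)^{1/n}=e^{-u_\varepsilon(x)/n},
$$
so that $S_\varepsilon(x)\ge n\,e^{-u_\varepsilon(x)/n}$. Taking logarithms gives $T_\varepsilon(x)=\log S_\varepsilon(x)\ge\log n-\dfrac{u_\varepsilon(x)}{n}>-\dfrac{u_\varepsilon(x)}{n}$, where the strict inequality uses $\log n>0$, i.e. $n\ge 2$ (and the case $n=1$, where $X$ is a curve, is trivial for the theorem anyway). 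Since $x$ was arbitrary, the claimed inequality $T_\varepsilon>-u_\varepsilon/n$ holds on all of $X$.

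There is essentially no obstacle here: the only subtle point is the \emph{strictness}, which is precisely what forces the use of $n\ge 2$ rather than $n\ge 1$. I expect this lemma to be used in the sequel by combining it with the integral inequality \eqref{intineq} from Claim~\ref{step2}: integrating $T_\varepsilon>-u_\varepsilon/n$ against $\omega_\varepsilon^n$ and controlling $\int_X S_\varepsilon\,\omega_\varepsilon^n$ should yield a lower bound for $\sup_X u_\varepsilon$, ruling out uniform divergence of $\{u_\varepsilon\}$ to $-\infty$ and hence establishing the key inequality of Claim~\ref{step3}.
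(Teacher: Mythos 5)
Your proof is correct and follows essentially the same route as the paper: a pointwise computation with the eigenvalues of $\omega$ relative to $\omega_\varepsilon$, combined with the Monge--Amp\`ere relation $\omega_\varepsilon^n=e^{u_\varepsilon}\omega^n$. The only cosmetic difference is that you invoke AM--GM (yielding the marginally sharper bound $T_\varepsilon\ge\log n-u_\varepsilon/n$), whereas the paper bounds $\operatorname{tr}_{\omega_\varepsilon}\omega$ below by its largest eigenvalue and uses $(\lambda_1)^n\le\lambda_1\cdots\lambda_n$; both arguments require $n\ge 2$ for strictness, a point you address explicitly and the paper leaves implicit.
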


\begin{proof}
Let $0<\lambda_{1}\le\cdots\le\lambda_n$ be the eigenvalues of $\omega_\varepsilon$ with respect to $\omega$, so that $0<1/\lambda_{n}\le\cdots\le 1/\lambda_1$ are the eigenvalues of $\omega$ with respect to $\omega_\varepsilon$. Then,
$$
e^{T_\varepsilon}=\operatorname{tr}_{\omega_\varepsilon}\omega=\frac 1{\lambda_1}+\cdots+\frac 1{\lambda_n}>\frac 1{\lambda_1}.
$$
Thus, $e^{-T_\varepsilon}<\lambda_1$ so that $e^{-nT_\varepsilon}<(\lambda_1)^n\le\lambda_1\cdots\lambda_n$.
But, $e^{u_\varepsilon}\omega^n=\omega_\varepsilon^n=\lambda_1\cdots\lambda_n\,\omega^n$, and so we get $
e^{-nT_\varepsilon}<e^{u_\varepsilon}$, or, in other words,
$$
T_\varepsilon>-\frac{u_\varepsilon}n.
$$
\end{proof}

As announced, we now use the integral inequality (\ref{intineq}). We write it as follows:
$$
\frac{(n+1)\kappa}{2n}\int_X e^{T_\varepsilon+u_\varepsilon}\,\omega^n\le 2\pi\int_X e^{u_\varepsilon}\,\omega^n.
$$
Next, if we define $C_\varepsilon:=\inf_X e^{-u_\varepsilon/n}$, by Lemma \ref{lem:sge-m} we have that $e^{T_\varepsilon}> C_\varepsilon$, and thus

\begin{equation}
C_\varepsilon\,\frac{(n+1)\kappa}{2n}\int_X e^{u_\varepsilon}\,\omega^n\le 2\pi\int_X e^{u_\varepsilon}\,\omega^n.
\end{equation}
But then, we obtain that 
$$
C_\varepsilon\,\frac{(n+1)\kappa}{2n}\le 2\pi,
$$
\textsl{i.e.}
$$
\inf_X e^{-u_\varepsilon/n}\le\frac{4\pi n}{(n+1)\kappa},
$$
so that
$$
\sup_X u_\varepsilon\ge -n\log\frac{4\pi n}{(n+1)\kappa}
$$ 
is the desired lower bound.
\end{proof}

\section{The Kähler case, and the quasi-negative holomorphic sectional curvature case}

As largely revealed in advance, the Wu--Yau theorem also holds for compact Kähler manifolds under the same curvature assumptions. This is due to Tosatti and Yang, on the lines of Wu-Yau's proof. One can also relax, still in the Kähler case, the negativity hypothesis on the curvature to quasi-negativity, as explained in Section 3.

Let us briefly explain how to obtain such results.

\subsection{The Kähler case}

In order to adapt the proof explained in the preceding section to the compact Kähler case, the main point is to show that a compact Kähler manifold endowed with a Kähler metric whose holomorphic sectional curvature is negative has nef canonical bundle. Indeed, this is obtained in the projective case by Mori's theorem, which is unknown for compact Kähler manifolds. The nefness of the canonical bundle still holds more generally under the (more natural) assumption of non positivity of the holomorphic sectional curvature.

\begin{theorem}[{\cite[Theorem 1.1]{TY15}}]
Let $(X,\omega)$ be a compact Kähler manifold with non positive holomorphic sectional curvature. Then the canonical bundle $K_X$ is nef.
\end{theorem}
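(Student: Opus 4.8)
The plan is to re-use the machinery already set up in Claims \ref{step1} and \ref{step2}, but with the curvature parameter $\kappa$ set equal to $0$, and to exploit it through the maximum principle rather than through an integration over $X$ (which would now be vacuous). Recall that $K_X$ is nef if and only if $c_1(K_X)+\tau[\omega]$ is a K\"ahler class for every $\tau>0$. So put
$$
\tau_0:=\inf\{\,\tau>0 \ : \ c_1(K_X)+\tau[\omega]\ \text{is a K\"ahler class}\,\}.
$$
This infimum is finite, since $\tau\omega-\operatorname{Ric}_\omega>0$ for $\tau$ large, and it is $\ge 0$; the statement amounts to $\tau_0=0$, so suppose for contradiction that $\tau_0>0$.

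For every $\tau\in(\tau_0,\tau_0+1]$ the class $c_1(K_X)+\tau[\omega]$ is K\"ahler, so the construction in the proof of Claim \ref{step1} applies with $\varepsilon$ replaced by $\tau$ (the only input it uses is that this class is K\"ahler): via the $\partial\bar\partial$-lemma and Yau's Theorem \ref{solCalabi} one obtains a K\"ahler form $\omega_\tau$ in the class $c_1(K_X)+\tau[\omega]$ with $\omega_\tau^n=e^{u_\tau}\omega^n$, $\operatorname{Ric}(\omega_\tau)=-\omega_\tau+\tau\omega$, and a uniform upper bound $\sup_X u_\tau\le C$ with $C=C(\omega,n,\tau_0)$.

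Now comes the heart of the argument, a uniform second order estimate. Apply Claim \ref{step2} with $\omega'=\omega_\tau$, $\lambda=1$, $\mu=\tau$ and $\kappa=0$: inspecting the proof of Claim \ref{step2} one checks that it goes through for $\kappa=0$ as well, Lemma \ref{term3} in that case being exactly the $K\le 0$ case of Royden's Proposition \ref{royden} (giving $\sum_{j,l}c_{jjll}/(\lambda_j\lambda_l)\le 0$), while Lemmas \ref{term1} and \ref{term2} do not involve $\kappa$. Writing $S_\tau:=\operatorname{tr}_{\omega_\tau}\omega$ one thus gets $\Delta_{\omega_\tau}\log S_\tau\le -\tfrac{2\pi\tau}{n}S_\tau+2\pi$; evaluating at a point where $S_\tau$ is maximal, where $\Delta_{\omega_\tau}\log S_\tau\ge 0$, yields $\sup_X S_\tau\le n/\tau\le n/\tau_0$. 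Hence every eigenvalue of $\omega$ with respect to $\omega_\tau$ is $\le n/\tau_0$, i.e. $\omega_\tau\ge(\tau_0/n)\,\omega$, and combining this lower bound with $\omega_\tau^n=e^{u_\tau}\omega^n$ and $u_\tau\le C$ gives at once a matching upper bound, so that $c\,\omega\le\omega_\tau\le C'\omega$ uniformly for $\tau\in(\tau_0,\tau_0+1]$ (and in particular $\|u_\tau\|_{C^0}$ is uniformly bounded). Uniform ellipticity of the Monge--Amp\`ere equation $\log\bigl((\tau\omega-\operatorname{Ric}_\omega+\tfrac{i}{2\pi}\partial\bar\partial u_\tau)^n/\omega^n\bigr)=u_\tau$ together with the Evans--Krylov theorem and Schauder bootstrapping then provides uniform $C^k$ bounds on $u_\tau$ for every $k$. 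Letting $\tau_j\downarrow\tau_0$ and passing to a subsequence, $\omega_{\tau_j}\to\omega_{\tau_0}$ in $C^\infty$, where $\omega_{\tau_0}\ge c\,\omega>0$ is a K\"ahler form in the class $c_1(K_X)+\tau_0[\omega]$. So that class is K\"ahler; since the K\"ahler cone is open and convex and contains $[\omega]$, the class $c_1(K_X)+(\tau_0-\delta)[\omega]$ is K\"ahler for small $\delta>0$, contradicting the definition of $\tau_0$. Therefore $\tau_0=0$, i.e. $K_X$ is nef.

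The main obstacle is precisely this second order estimate. The hypothesis controls only the holomorphic \emph{sectional} curvature, so the classical $C^2$ estimate for complex Monge--Amp\`ere equations --- which needs a lower bound on the holomorphic bisectional curvature --- is unavailable, and it is Royden's Proposition \ref{royden}, already packaged inside Claim \ref{step2}, that converts the sum of bisectional-type curvature terms in the Laplacian of $\operatorname{tr}_{\omega_\tau}\omega$ into a quantity controlled by the sectional bound. The other delicate point is uniformity: the constants in the differential inequality (\ref{diffineq}) depend on $\tau$ only through the strictly positive number $\tau_0$, which is what lets all the estimates above be taken uniform along the family and hence survive the limit $\tau\to\tau_0^+$.
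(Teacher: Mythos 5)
Your proof is correct and follows essentially the same route as the paper: argue by contradiction at the boundary of the K\"ahler cone, produce approximate K\"ahler--Einstein metrics via Yau's theorem for the K\"ahler classes $c_1(K_X)+\tau[\omega]$ with $\tau$ above the critical value, use the $\kappa=0$ case of the Laplacian inequality of Claim \ref{step2} (via Royden's Proposition \ref{royden}) together with the maximum principle to get the uniform bound $\operatorname{tr}_{\omega_\tau}\omega\le n/\tau$, upgrade to a two-sided metric equivalence and smooth convergence, and conclude that the boundary class is K\"ahler, a contradiction. The only differences from the paper's argument are cosmetic (parametrizing directly by $\tau\downarrow\tau_0$ rather than by $\varepsilon+\varepsilon_0$, and phrasing the final contradiction via openness of the K\"ahler cone rather than via the class being nef but not K\"ahler).
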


\begin{proof}[Sketch of the proof]
Suppose by contradiction that $K_X$ is not nef, that is $-c_1(X)$ is not in the closure of the Kähler cone. Since $t[\omega]-c_1(X)$ becomes a Kähler class for $t\gg 0$, one can select the time $\varepsilon_0$ when $t[\omega]-c_1(X)$ cuts the boundary of the Kähler cone, that is $\varepsilon_0[\omega]-c_1(X)$ is a nef but not Kähler class. Thus, for every $\varepsilon>0$ one gets a Kähler class $(\varepsilon+\varepsilon_0)[\omega]-c_1(X)$. Therefore, by $\partial\bar\partial$-lemma and Yau's solution of Calabi's conjecture, for every $\varepsilon>0$ we obtain a Kähler metric $\omega_\varepsilon\in(\varepsilon+\varepsilon_0)[\omega]-c_1(X)$ such that
$$
\omega_\varepsilon=(\varepsilon+\varepsilon_0)\omega-\Ric_\omega+\frac i{2\pi}\partial\bar\partial u_\varepsilon
$$
and
$$
\omega_\varepsilon^n=e^{u_\varepsilon}\omega^n.
$$
For this metric, we have
$$
\Ric_{\omega_\varepsilon}=-\omega_\varepsilon+(\varepsilon+\varepsilon_0)\omega.
$$
Now, precisely as in Claim \ref{step1} and \ref{step2}, we get a uniform upper bound on the one hand for
$$
\sup_X e^{u_\varepsilon}\le C,
$$
and, on the other hand, for
$$
-\Delta_{\omega_{\varepsilon}}\log S_\varepsilon\ge\biggl(\frac{\kappa(n+1)}{2n}+2\pi\frac\mu n\biggr)S_\varepsilon-2\pi\lambda,
$$
where $S_\varepsilon=\operatorname{tr}_{\omega_\varepsilon}\omega$, and here $\kappa=0$, $\lambda=1$, and $\mu=\varepsilon+\varepsilon_0$. By the maximum principle we obtain
$$
\sup_X\operatorname{tr}_{\omega_\varepsilon}\omega\le\frac n{\varepsilon+\varepsilon_0},
$$
which is uniformly bounded as $\varepsilon$ approaches to zero, since $\Delta_{\omega_{\varepsilon}}\log S_\varepsilon$ is non negative at a point where $S_\varepsilon$ (and hence $\log S_\varepsilon$) achieves a (local) maximum. Now, the elementary inequality
$$
\operatorname{tr}_{\omega}\omega_\varepsilon\le\frac 1{(n-1)!}\bigl(\operatorname{tr}_{\omega_\varepsilon}\omega\bigr)^{n-1}\underbrace{\frac{\omega_\varepsilon^n}{\omega^n}}_{=e^{u_\varepsilon}},
$$
enables us to conclude that also $\operatorname{tr}_{\omega}\omega_\varepsilon$ is uniformly bounded from above, so that we have
$$
B^{-1}\,\omega\le\omega_\varepsilon\le B\,\omega,
$$
for some positive constant $B$. Beside this control for $\omega_\varepsilon$, it is also possible to work out uniform $C^k$-estimates for all $k\ge 0$ ---as explained in \cite[pp. 577--578]{TY15}--- and these together with the Ascoli--Arzelà Theorem and a diagonal argument allow to obtain the existence of a sequence $\varepsilon_k\to 0$ such that $\omega_{\varepsilon_k}$ converges smoothly to a Kähler metric $\omega_0$ which of course satisfies $[\omega_0] = \varepsilon_0\,[\omega]-c_1(X)$. But this is a contradiction, since we were supposing that $\varepsilon_0\,[\omega]-c_1(X)$ is a nef but not Kähler class, and hence it cannot contain any Kähler metric.
\end{proof}

Once the nefness of the canonical class is known also in the Kähler setting the proof proceeds in the same way of the projective case, with a small further argument at the end. Indeed, after proving Claim \ref{step3}, we know that $c_1(K_X)^n>0$. Then, by \cite[Theorem 0.5]{DP04}, we deduce that $K_X$ is big. In particular, carrying a big line bundle, $X$ is Moishezon. Since $X$ is Kähler and Moishezon, by Moishezon’s theorem $X$ is projective. We have thus finally landed in the projective world, where we can apply Lemma \ref{lem:ratcurv} and conclude the proof of the ampleness.

\subsection{The Kähler quasi-negative case}

Relaxing further the hypotheses, we finally come to the case of a compact Kähler manifold supporting a Kähler metric whose holomorphic sectional curvature is quasi-negative.

Thus, the first problem to be faced is that we do not dispose anymore of a negative uniform upper bound for the holomorphic sectional curvature. Such a bound is here replaced by the continuous function on $X$:
$$
\begin{aligned}
\kappa\colon & X\to\mathbb R \\
& x\mapsto -\max_{v\in T_{X,x}\setminus\{0\}}\HSC_\omega(x,[v]).
\end{aligned}
$$
The quasi-negativity of the holomorphic sectional curvature of Theorem \ref{thm:main} translates in $\kappa\ge 0$ and $\kappa(x_0)>0$ for some $x_0\in X$.

As we saw, for every $\varepsilon>0$ we have the following crucial inequality which makes the holomorphic sectional curvature enter into the picture:
\begin{equation}\label{eq:maindiffineq}
\Delta_{\omega_\varepsilon}T_\varepsilon(x)\ge\biggl(\frac{n+1}{2n}\kappa(x)+\frac\varepsilon n\biggr)e^{T_\varepsilon(x)}-1.
\end{equation}
The inequality being of pointwise nature, it still holds with using the continuous function $\kappa$.
Set $M(x)=\frac{n+1}{2n}\kappa(x)$. By plain minoration of the right hand side, we obtain that the $T_\varepsilon$'s satisfy the following differential inequality:
\begin{equation}\label{eq:maindiffineq'}
\Delta_{\omega_\varepsilon}T_\varepsilon(x)\ge M(x)\,e^{T_\varepsilon(x)}-1.
\end{equation}

For each $\varepsilon>0$, as before, integrate (\ref{eq:maindiffineq'}) over $X$ using the volume form associated to $\omega_\varepsilon$, to get
$$
0=\int_X\Delta_{\omega_\varepsilon}T_\varepsilon\,\omega_\varepsilon^n\ge\int_X\bigl(Me^{T_\varepsilon}-1\bigr)\,\omega_\varepsilon^n.
$$
We obtain therefore the following integral inequality:
$$
\int_X Me^{T_\varepsilon}e^{u_\varepsilon}\,\omega^n\le\int_X e^{u_\varepsilon}\,\omega^n,
$$
and setting $v_\varepsilon=u_\varepsilon-\sup_X u_\varepsilon$ one has
$$
\int_X Me^{T_\varepsilon}e^{v_\varepsilon}\,\omega^n\le\int_X e^{v_\varepsilon}\,\omega^n.
$$
Next, if we define $C_\varepsilon:=\inf_X e^{-u_\varepsilon/n}$, we have that $e^{T_\varepsilon}> C_\varepsilon$, and
\begin{equation}\label{eq:final}
C_\varepsilon\int_X Me^{v_\varepsilon}\,\omega^n\le\int_X e^{v_\varepsilon}\,\omega^n.
\end{equation}
Moreover, recall that we are assuming by contradiction that $C_\varepsilon\to +\infty$ as $\varepsilon\to 0^+$. 

Now, the same reasoning made during the proof of Claim \ref{step3} tells us that there exists a subsequence $\{v_{\varepsilon_k}\}$ of $\{v_\varepsilon\}$ converging in $L^1(X)$ and moreover the limit coincides \textsl{a.e.} with a uniquely determined $\omega'$-plurisubharmonic function $v$. Indeed, the case where $\{v_\varepsilon\}$ converges uniformly to $-\infty$ is not possible here since the supremum of the $v_\varepsilon$'s is fixed and equal to $0$. Again, up to pass to a further subsequence, we can also suppose that $v_{\varepsilon_k}$ converges pointwise \textsl{a.e.} to $v$. But then, $e^{v_{\varepsilon_k}}\to e^v$ pointwise \textsl{a.e.} on $X$. On the other hand, we have $e^{v_{\varepsilon_k}}\le 1$ so that, by dominated convergence, we also have $L^1(X)$-convergence and therefore
$$
\lim_{k\to\infty}\int_X e^{v_{\varepsilon_k}}\,\omega^n=\int_X e^v\omega^n>0,
$$
and
$$
\lim_{k\to\infty}\int_X Me^{v_{\varepsilon_k}}\,\omega^n=\int_X Me^v\omega^n>0,
$$
since $M$ is non negative and strictly positive in at least one point, while the set of points where $v=-\infty$ has zero measure.

Plugging this information into inequality (\ref{eq:final}) we obtain the desired contradiction since the left hand side blows up while the right hand side converges to some fixed positive number.

\bibliography{bibliography}{}

\end{document}